%

% Choix du type de document :

%
\documentclass[a4paper]{amsart}
%

% Forme :

%
\usepackage{color}
\usepackage[latin1]{inputenc}
\usepackage[T1]{fontenc}
\usepackage{amsfonts}
\usepackage{amssymb}
\usepackage{amsmath}
\usepackage{amsthm}
%

% Graphiques :

%
\usepackage{graphicx,type1cm,eso-pic,color}
\usepackage{pstricks,pst-plot,pstricks-add}
\usepackage{float}
%

% Watermark :

%
%\makeatletter
%\AddToShipoutPicture{
%\setlength{\@tempdimb}{.5\paperwidth}
%\setlength{\@tempdimc}{.5\paperheight}
%\setlength{\unitlength}{1pt}
%\put(\strip@pt\@tempdimb,\strip@pt\@tempdimc){
%\makebox(0,0){\rotatebox{55}{\textcolor[gray]{0.85}
%{\fontsize{5cm}{5cm}\selectfont{DRAFT}}}}}}
%\makeatother
%

% Environnements :

%
\newtheorem{theorem}{Theorem}[section]
\newtheorem{lemma}[theorem]{Lemma}
\newtheorem{proposition}[theorem]{Proposition}
\newtheorem{corollary}[theorem]{Corollary}
\newtheorem{definition}[theorem]{Definition}
\newtheorem{assumption}[theorem]{Assumption}
%

% Corps du document :

%
\begin{document}
\setlength\arraycolsep{2pt}
\title{Ergodicity of a Generalized Jacobi Equation and Applications}
\author{Nicolas MARIE}
\address{Laboratoire Modal'X, Universit\'e Paris 10, 92000, Nanterre}
\email{nmarie@u-paris10.fr}
\address{Laboratoire ISTI, ESME Sudria, 75015, Paris}
\email{marie@esme.fr}
\keywords{Euler scheme, Fractional Brownian motion, Jacobi's equation, Malliavin calculus, Morris-Lecar's model, Random dynamical systems, Rough paths, Stochastic differential equations}
\date{}
\maketitle
%

% Abstract.

%
\begin{abstract}
Consider a $1$-dimensional centered Gaussian process $W$ with $\alpha$-H\"older continuous paths on the compact intervals of $\mathbb R_+$ ($\alpha\in ]0,1[$) and $W_0 = 0$, and $X$ the local solution in rough paths sense of Jacobi's equation driven by the signal $W$.
\\
The global existence and the uniqueness of the solution are proved via a change of variable taking into account the singularities of the vector field, because it doesn't satisfy the non-explosion condition. The regularity of the associated It\^o map is studied.
\\
By using these deterministic results, Jacobi's equation is studied on probabilistic side : an ergodic theorem in L. Arnold's random dynamical systems framework, and the existence of an explicit density with respect to Lebesgue's measure for each $X_t$, $t > 0$ are proved.
\\
The paper concludes on a generalization of Morris-Lecar's neuron model, where the normalized conductance of the $\textrm{K}^+$ current is the solution of a generalized Jacobi's equation.
\end{abstract}
\tableofcontents
\noindent
\textbf{MSC2010 :} 60H10.
\\
\\
\textbf{Acknowledgements.} Many thanks to Laure Coutin for her advices. This work was supported by ANR Masterie.
%

% Section : Introduction.

%
\section{Introduction}
\noindent
Let $W$ be a $1$-dimensional centered Gaussian process with $\alpha$-H\"older continuous paths on the compact intervals of $\mathbb R_+$ ($\alpha\in ]0,1]$) and $W_0 = 0$.
\\
\\
Consider the Jacobi(-type) stochastic differential equation :
\begin{equation}\label{stochastic_auxiliary_equation}
X_t =
x_0
-\int_{0}^{t}
\theta_s\left(X_s - \mu_s\right)ds +
\int_{0}^{t}
\gamma_s\left[\theta_s\left[X_s\left(1 - X_s\right)\right]\right]^{\beta}dW_s
\end{equation}
where, $x_0\in ]0,1[$ is a deterministic initial condition, and the two following assumptions are satisfied :
%

% Assumption on $\beta$.

%
\begin{assumption}\label{assumption_beta}
$\beta$ is a deterministic exponent satisfying $\beta\in ]1-\alpha,1[$.
\end{assumption}
%

% Assumptions on coefficients of the auxiliary equation.

%
\begin{assumption}\label{auxiliary_assumption}
$\theta$, $\mu$ and $\gamma$ are three continuously differentiable functions on $\mathbb R_+$ such that $\theta_t > 0$, $\mu_t\in ]0,1[$ and $\gamma_t\in\mathbb R$ for every $t\in\mathbb R_+$.
\end{assumption}
\noindent
If the driving signal is a standard Brownian motion, (\ref{stochastic_auxiliary_equation}) taken in the sense of It\^o with $\beta = 1/2$ is the classical Jacobi equation. In that case, the Markov property of the solution is crucial to bypass the difficulties related to the vector field's singularities (cf. S. Karlin and H.M. Taylor \cite{KT81}).
\\
In this paper, deterministic and probabilistic properties of (\ref{stochastic_auxiliary_equation}) are studied by taking it in the sense of rough paths (cf. T. Lyons and Z. Qian \cite{LQ02} and P. Friz and N. Victoir \cite{FV10}). Doss-Sussman's method could also be used since (\ref{stochastic_auxiliary_equation}) is a $1$-dimensional equation (cf. H. Doss \cite{DOSS76} and H.J. Sussman \cite{SUSSMAN78}), but the rough paths theory allows to provide estimates for the $\alpha$-H\"older semi-norm which is more precise than the uniform norm. A priori, even in these frameworks, equation (\ref{stochastic_auxiliary_equation}) admits only a local solution because its vector field is not Lipschitz continuous on neighbourhoods of 0 and 1.
\\
\\
Section 2 is devoted to the global existence and the uniqueness of the solution of Jacobi's equation, the regularity of the associated It\^o map, and a converging approximation scheme with a rate of convergence. Section 3 provides some probabilistic consequences of these deterministic results : the convergence of the approximation scheme mentioned above in $L^p(\Omega;\mathbb P)$ for each $p\geqslant 1$, an ergodic theorem in L. Arnold's random dynamical systems framework, and the existence of an explicit density for $X_t$ with respect to Lebesgue's measure on $(\mathbb R,\mathcal B(\mathbb R))$ for each $t\in\mathbb R_{+}^{*}$. The case of fractional Brownian signals is developed.
\\
Jacobi's equation is tailor-made to model dynamical proportions. For instance, the classical Jacobi equation, taken in the sense of It\^o for $\beta = 1/2$ models the normalized conductance of the $\textrm{K}^+$ current in Morris-Lecar's neuron model provided in S. Ditlevsen and P. Greenwood \cite{DG12}. Section 5 suggests an extension of that neuron model by replacing the classical Jacobi equation by the pathwise generalization studied in this paper.
\\
\\
Some useful results and notations on random dynamical systems (cf. L. Arnold \cite{ARNOLD98}) and Malliavin calculus (cf. D. Nualart \cite{NUALART06}) are stated in Appendix A.
\\
\\
\textbf{Notations.} Consider $t > s\geqslant 0$ and an interval $I\subset\mathbb R$ :
\begin{itemize}
 \item The space $C^0([s,t];I)$ of continuous functions from $[s,t]$ into $I$ is equipped with the uniform norm $\|.\|_{\infty;s,t}$ :
 \begin{displaymath}
 \forall x\in C^0
 \left([s,t];I\right)
 \textrm{, }
 \|x\|_{\infty;s,t} =
 \sup_{u\in [s,t]}
 |x_u|.
 \end{displaymath}
 If $s = 0$, that norm is denoted by $\|.\|_{\infty;t}$.
 \item The space $C^0(\mathbb R_+;I)$ of continuous functions from $\mathbb R_+$ into $I$ is equipped with the compact-open topology. When $I$ is bounded, $C^0(\mathbb R_+;I)$ is sometimes equipped with the uniform norm $\|.\|_{\infty}$ :
 \begin{displaymath}
 \forall x\in C^0\left(\mathbb R_+;I\right)
 \textrm{, }
 \|x\|_{\infty} =
 \sup_{u\in\mathbb R_+}
 |x_u|.
 \end{displaymath}
 \item The space $C^{\alpha}([s,t];I)$ of $\alpha$-H\"older continuous functions $x$ from $[s,t]$ into $I$, such that $x_s = 0$, is equipped with the $\alpha$-H\"older norm $\|.\|_{\alpha;s,t}$ :
 \begin{displaymath}
 \forall x\in C^{\alpha}\left([s,t];I\right)
 \textrm{, }
 \|x\|_{\alpha;s,t} =
 \sup_{s\leqslant v < u\leqslant t}
 \frac{|x_v - x_u|}{|v - u|^{\alpha}}.
 \end{displaymath}
 If $s = 0$, that norm is denoted by $\|.\|_{\alpha;t}$.
 \item The space $C^{\alpha}(\mathbb R_+;I)$ of $I$-valued and $\alpha$-H\"older continuous functions on the compact intervals of $\mathbb R_+$, such that $w_0 = 0$, is equipped with the topology of the convergence on $[0,T]$ for $\|.\|_{\alpha;T}$ and each $T > 0$.
\end{itemize}
%

% Section : Deterministic properties of Jacobi's equation.

%
\section{Deterministic properties of Jacobi's equation}
\noindent
Under assumptions \ref{assumption_beta} and \ref{auxiliary_assumption}, the first subsection is devoted to show it admits a unique global solution in the deterministic rough differential equations framework. The regularity of the It\^o map is studied at the second subsection, and a converging approximation scheme is provided at the third one, with a rate of convergence.
\\
\\
Let $w : [0,T]\rightarrow\mathbb R$ be a function satisfying the following assumption :
%

% Assumption : Signal's regularity.

%
\begin{assumption}\label{signal_regularity}
The function $w$ is $\alpha$-H\"older continuous ($\alpha\in ]0,1]$) and $w_0 = 0$.
\end{assumption}
\noindent
Consider the following deterministic analog of equation (\ref{stochastic_auxiliary_equation}) :
\begin{equation}\label{auxiliary_equation}
x_t =
x_0
-\int_{0}^{t}
\theta_s\left(x_s - \mu_s\right)ds +
\int_{0}^{t}
\gamma_s\left[\theta_s\left[x_s\left(1 - x_s\right)\right]\right]^{\beta}dw_s.
\end{equation}
\noindent
The map $A\mapsto [A(1-A)]^{\beta}$ is $C^{\infty}$ and bounded with bounded derivatives on $[\varepsilon,1-\varepsilon]$ for every $\varepsilon > 0$. Then, equation (\ref{auxiliary_equation}) admits a unique solution in the sense of rough paths (cf. \cite{FV10}, Definition 10.17) by applying \cite{FV10}, Exercice 10.56 up to the time
\begin{displaymath}
\tau_{\varepsilon,1-\varepsilon} :=
\inf\left\{
t\in [0,T] :
x_t = \varepsilon
\textrm{ or }
x_t = 1-\varepsilon\right\}
\textrm{ $;$ }
\forall\varepsilon\in ]0,x_0]
\end{displaymath}
with the convention $\inf(\emptyset) = \infty$.
\\
\\
\textbf{Remark.} The underlying, canonical, geometric rough path $\mathbb W$ over $w$ is defined by :
\begin{eqnarray*}
 \mathbb W_{s,t} & := &
 \left(1,\int_{s}^{t}dw_r,\int_{s < r_1 < r_2 < t}dw_{r_1}dw_{r_2},\dots,
 \int_{s < r_1 <\dots < r_{[1/\alpha]} < t}dw_{r_1}\dots dw_{r_{[1/\alpha]}}\right)\\
 & = &
 \left(1,w_t - w_s,\frac{(w_t - w_s)^2}{2},\dots,\frac{(w_t - w_s)^{[1/\alpha]}}{[1/\alpha]!}\right)
 \textrm{ $;$ }\forall t > s\geqslant 0.
\end{eqnarray*}
The purpose of the following subsection is to prove that $\tau_{0,1}\not\in [0,T]$ under assumptions \ref{assumption_beta} and \ref{auxiliary_assumption}, where $\tau_{0,1} > 0$ is defined by $\tau_{\varepsilon,1-\varepsilon}\uparrow\tau_{0,1}$ when $\varepsilon\rightarrow 0$.
\\
\\
\textbf{Remark.} Note that $\tau_{\varepsilon,1-\varepsilon}$ is equal to $\tau_{\varepsilon}\wedge\tau_{1-\varepsilon}$ where,
\begin{displaymath}
\tau_A :=
\inf\left\{t\in [0,T] : x_t = A\right\}
\textrm{ $;$ }
\forall A > 0.
\end{displaymath}
%

% Subsection : Existence and uniqueness of the solution.

%
\subsection{Existence and uniqueness of the solution}
As in N.M. \cite{MARIE12}, the vector field of equation (\ref{auxiliary_equation}) suggests a change of variable which provides a differential equation with additive noise. Under assumptions \ref{assumption_beta} and \ref{auxiliary_assumption}, that new equation allows to show that $\tau_{0,1}\not\in [0,T]$.
\\
\\
Consider the domain
\begin{displaymath}
D = \left\{(u,y)\in [0,1]\times\mathbb R_+ : uy < 1\right\}
\end{displaymath}
and the map $F$ defined on $D$ by
\begin{displaymath}
F(u,y) :=
\int_{0}^{y}
[v(1 - uv)]^{-\beta}dv.
\end{displaymath}
%
%

% Proposition : Properties of the change of variable $F$.

%
\begin{proposition}\label{properties_F}
Under Assumption \ref{assumption_beta}, the map $F$ satisfies the following properties :
\begin{enumerate}
 \item For every $u\in [0,1]$, the map $F(u,.)$ is strictly increasing on $[0,1/u[$.
 \item For every $u\in ]0,1]$, the map $F(u,.)$ is bijective from $[0,1/u]$ into $[0,F(u,1/u)]$, and its reciprocal map $F_{u}^{-1}$ is continuously derivable on $[0,F(u,1/u)]$.
 \\
 Moreover, $F(0,.)$ is bijective from $[0,\infty[$ into $[0,\infty[$, and its reciprocal map $F_{0}^{-1}$ is continuously derivable on $[0,\infty[$.
 \item For every $y\in\mathbb R_+$, the map $F(.,y)$ is strictly increasing on $[0,1/y[$.
 \item For every $u\in ]0,1]$ and $z\in [0,F(u,1/u)[$, the map $F_{.}^{-1}(z)$ is decreasing on $[0,u]$.
 \item For every $u\in ]0,1]$ and $z\in [0,F(u,1/u)[$,
 \begin{displaymath}
 |\partial_zF_{u}^{-1}(z)|
 \leqslant (1 -\beta)^{\hat\beta}z^{\hat\beta}
 \end{displaymath}
 with $\hat\beta :=\beta/(1 -\beta)$.
 \item Let $G : ]0,1[\rightarrow\mathbb R$ be the map defined by
 \begin{displaymath}
 G(y) :=\theta(\mu - y)\partial_yF(1,y)
 \textrm{ $;$ }
 \forall y\in ]0,1[
 \end{displaymath}
 with $\theta > 0$, $\mu\in ]0,1[$ and $\gamma\in\mathbb R$. There exists $l > 0$ such that :
 \begin{displaymath}
 (G\circ F_{1}^{-1})'(z) < -l
 \textrm{ $;$ }\forall z\in ]0,F(1,1)[.
 \end{displaymath}
 So, $G\circ F_{u}^{-1}$ is strictly decreasing on $]0,F(1,1)[$.
\end{enumerate}
\end{proposition}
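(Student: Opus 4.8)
The plan is to treat the six items in increasing order of difficulty, the first five following from differentiation under the integral sign and the last being the only genuine computation. First I would record the two partial derivatives of $F$ on the interior of $D$: differentiating under the integral gives $\partial_yF(u,y)=[y(1-uy)]^{-\beta}$ and $\partial_uF(u,y)=\beta\int_0^y v^2[v(1-uv)]^{-\beta-1}\,dv$, both legitimate because $\beta\in\,]0,1[$ makes the integrand integrable at $v=0$ (like $v^{-\beta}$) and, for item (2), at the endpoint $v=1/u$ (like $(1-uv)^{-\beta}$), so $F(u,1/u)<\infty$. Item (1) is then immediate since $\partial_yF(u,y)>0$, and item (3) follows likewise from $\partial_uF(u,y)>0$. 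For item (2), strict monotonicity and continuity make $F(u,\cdot)$ a homeomorphism of $[0,1/u[$ onto $[0,F(u,1/u)[$; the inverse function theorem gives a $C^1$ inverse on the open interval, and at $z=0$ the relation $\partial_zF^{-1}(u,z)=[y(1-uy)]^{\beta}$ (with $y=F^{-1}(u,z)$) tends to $0$, so $F^{-1}(u,\cdot)\in C^1([0,F(u,1/u)[)$.

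For items (4) and (5) I would use implicit differentiation of $F(u,F^{-1}(u,z))=z$. This yields $\partial_uF^{-1}(u,z)=-\partial_uF/\partial_yF\le0$, proving (4); the only point to check is that $z$ stays in the range along $[0,u]$, which holds because the substitution $v=w/u$ gives $F(u,1/u)=u^{\beta-1}\mathrm{B}(1-\beta,1-\beta)$, a decreasing function of $u$, so $F(u',1/u')\ge F(u,1/u)>z$ for $u'\le u$. For (5), from $\partial_zF^{-1}(u,z)=[y(1-uy)]^{\beta}\le y^{\beta}$ together with the lower bound $z=\int_0^y[v(1-uv)]^{-\beta}\,dv\ge\int_0^y v^{-\beta}\,dv=y^{1-\beta}/(1-\beta)$ I would deduce $y\le[(1-\beta)z]^{1/(1-\beta)}$ and hence $|\partial_zF^{-1}(u,z)|\le(1-\beta)^{\hat\beta}z^{\hat\beta}$.

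The main work is item (6), and it is the step I expect to be delicate. Writing $y=F^{-1}(1,z)\in\,]0,1[$ and using $\partial_zF^{-1}(1,z)=[y(1-y)]^{\beta}$, the chain rule collapses the weight $[y(1-y)]^{\pm\beta}$ and gives
\[
[G\circ F^{-1}(1,\cdot)]'(z)=-\theta\,\phi(y),\qquad \phi(y):=1+\beta\,\frac{(\mu-y)(1-2y)}{y(1-y)}.
\]
Everything therefore reduces to showing $\inf_{y\in\,]0,1[}\phi(y)>0$. Here I would rewrite the rational part as $-2+\dfrac{(1-2\mu)y+\mu}{y(1-y)}$, so that $\phi(y)=1-2\beta+\beta R(y)$ with $R(y):=[(1-2\mu)y+\mu]/[y(1-y)]$; note $R>0$, since its numerator is positive on $[0,1]$, and $R\to+\infty$ at both endpoints, so its infimum is an interior minimum. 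Solving $R'(y)=0$ reduces to $(1-2\mu)y^2+2\mu y-\mu=0$, whose root in $]0,1[$ simplifies, after setting $p=\sqrt\mu$ and $q=\sqrt{1-\mu}$, to $y^*=p/(p+q)$; evaluating there gives the clean value $R(y^*)=(p+q)^2=1+2\sqrt{\mu(1-\mu)}$. Hence $\min\phi=1-2\beta+\beta(1+2\sqrt{\mu(1-\mu)})=1-\beta+2\beta\sqrt{\mu(1-\mu)}\ge1-\beta>0$, and choosing $l:=\theta(1-\beta)/2$ yields $[G\circ F^{-1}(1,\cdot)]'(z)\le-\theta(1-\beta)<-l$ for all $z\in\,]0,F(1,1)[$, which proves (6). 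The delicate point is precisely the identification of $y^*$ and the exact minimum $1+2\sqrt{\mu(1-\mu)}$: the crude estimate $R\ge4\min(\mu,1-\mu)$ is not sufficient when $\beta$ is close to $1$, so the exact critical-point computation is what makes the argument go through.
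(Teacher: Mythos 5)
Your proposal is correct. Items (1)--(3) coincide with the paper's proof (same formulas $\partial_yF=[y(1-uy)]^{-\beta}$, $\partial_uF=\beta\int_0^yv^2[v(1-uv)]^{-\beta-1}dv$, same inverse-function argument). For item (4) you differentiate the identity $F(u,F^{-1}(u,z))=z$ implicitly to get $\partial_uF^{-1}=-\partial_uF/\partial_yF\leqslant 0$, whereas the paper argues by contradiction from the monotonicity statements (1) and (3); your route is shorter but requires the (easy) observation, which you do supply, that $u'\mapsto F(u',1/u')$ is decreasing so that $F^{-1}(u',z)$ remains defined on all of $[0,u]$. For item (5) you bound $F^{-1}(u,z)$ directly via $z\geqslant\int_0^yv^{-\beta}dv=y^{1-\beta}/(1-\beta)$, while the paper deduces the same bound $F^{-1}(u,z)\leqslant F^{-1}(0,z)=[(1-\beta)z]^{1/(1-\beta)}$ as a consequence of item (4); the resulting estimate is identical. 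The genuine divergence is item (6): the paper reduces the positivity of $-G'/F_1'$ to the positivity of the quadratic $P(y)=(2\beta-1)y^2+(1-\beta-2\mu\beta)y+\mu\beta$ and runs a case analysis on the sign of $2\beta-1$ and the location of the vertex, then extracts the uniform constant $l$ somewhat indirectly from the endpoint behaviour of $G'/F_1'$. You instead write $-G'/F_1'=\theta(1-2\beta+\beta R(y))$ with $R(y)=[(1-2\mu)y+\mu]/[y(1-y)]$, locate the unique interior critical point $y^*=\sqrt\mu/(\sqrt\mu+\sqrt{1-\mu})$ and compute $\min R=(\sqrt\mu+\sqrt{1-\mu})^2=1+2\sqrt{\mu(1-\mu)}$ exactly. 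I verified this algebra, as well as the uniqueness of the critical point (the numerator $(1-2\mu)y^2+2\mu y-\mu$ of $R'$ changes sign exactly once on $[0,1]$), and your remark that the crude bound $R\geqslant 4\min(\mu,1-\mu)$ would not suffice for $\beta$ close to $1$ is accurate. Your version yields the fully explicit uniform bound $(G\circ F_1^{-1})'\leqslant-\theta(1-\beta)$, avoids the paper's case distinction, and gives a sharper, more usable constant $l$; the paper's version avoids having to solve for the critical point. Both are valid proofs.
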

%

% Proof.

%
\begin{proof}
\white .\black
\begin{enumerate}
 \item Let $u\in [0,1]$ be arbitrarily chosen. For every $y\in ]0,1/u[$,
 \begin{displaymath}
 \partial_yF(u,y) =
 [y(1 - uy)]^{-\beta} > 0.
 \end{displaymath}
 So, $F(u,.)$ is strictly increasing on $[0,1/u[$.
 \item On the one hand, let $u\in ]0,1]$ be arbitrarily chosen. By Proposition \ref{properties_F}.(1), the map $F(u,.)$ is bijective from $[0,1/u[$ into $[0,F(u,1/u)[$, with
 \begin{displaymath}
 F(u,1/u) = u^{-(1 -\beta)}\int_{0}^{1}[v(1 - v)]^{-\beta}dv.
 \end{displaymath}
 The function $F_{u}^{-1}$ is continuously derivable on $[0,F(u,1/u)[$ because $F(u,.)$ is continuously derivable on $]0,1/u[$, and
 \begin{displaymath}
 \partial_zF_{u}^{-1}(z) =
 [F_{u}^{-1}(z)[1 - uF_{u}^{-1}(z)]]^{\beta}
 \xrightarrow[z\rightarrow 0]{} 0.
 \end{displaymath}
 On the other hand,
 \begin{displaymath}
 F(0,y) =
 \frac{1}{1 -\beta}y^{1-\beta}
 \textrm{ $;$ }
 \forall y\in [0,\infty[
 \end{displaymath}
 and
 \begin{displaymath}
 F_{0}^{-1}(z) =
 (1 -\beta)^{1/(1-\beta)}
 z^{1/(1-\beta)}
 \textrm{ $;$ }
 \forall z\in [0,\infty[.
 \end{displaymath}
 \item Let $y\in\mathbb R_+$ be arbitrarily chosen. For every $u\in ]0,1/y[$,
 \begin{displaymath}
 \partial_uF(u,y) =
 \beta\int_{0}^{y}
 v^2[v(1 - uv)]^{-(\beta + 1)}dv > 0.
 \end{displaymath}
 So, $F(.,y)$ is strictly increasing on $[0,1/y[$.
 \item Let $u\in ]0,1]$, $z\in [0,F(u,1/u)[$ and $u_1,u_2\in [0,u]$ be arbitrarily chosen. Assume that $u_1 > u_2$. Since $F(u_1,.)$ (resp. $F(u_2,.)$) is bijective from $[0,1/u_1[$ into $[0,F(u_1,1/u_1)[\supset [0,F(u,1/u)[$ (resp. $[0,1/u_2[$ into $[0,F(u_2,1/u_2)[\supset [0,F(u,1/u)[$) :
 \begin{displaymath}
 \exists (y_1,y_2)\in [0,1/u_1[\times [0,1/u_2[ :
 F(u_1,y_1) = F(u_2,y_2) = z.
 \end{displaymath}
 So, $F_{u_1}^{-1}(z) = y_1$ and $F_{u_2}^{-1}(z) = y_2$. Suppose that $y_1 > y_2$. Since $F(.,y_2)$ and $F(u_1,.)$ are strictly increasing on $[0,1/y_2[$ and $[0,1/u_1[$ respectively :
 \begin{displaymath}
 z = F(u_2,y_2) < F(u_1,y_2) < F(u_1,y_1) = z.
 \end{displaymath}
 There is a contradiction, so $y_1 = F_{u_1}^{-1}(z) < F_{u_2}^{-1}(z) = y_2$. Therefore, $F_{.}^{-1}(z)$ is decreasing on $[0,u]$.
 \item Let $u\in ]0,1]$ be arbitrarily chosen. As shown previously :
 \begin{displaymath}
 \partial_zF_{u}^{-1}(z) =
 [F_{u}^{-1}(z)[1 - uF_{u}^{-1}(z)]]^{\beta}
 \textrm{ $;$ }\forall z\in [0,F(u,1/u)[.
 \end{displaymath}
 Let $z\in [0,F(u,1/u)[$ be arbitrarily chosen. On the one hand, since $F_{.}^{-1}(z)$ is decreasing on $[0,u]$ by Proposition \ref{properties_F}.(4) :
 \begin{displaymath}
 0\leqslant F_{u}^{-1}(z)\leqslant
 F_{0}^{-1}(z) =
 [(1 - \beta)z]^{1/(1-\beta)}.
 \end{displaymath}
 On the other hand, since $(u,F_{u}^{-1}(z))\in D$ :
 \begin{displaymath}
 0\leqslant 1 - uF_{u}^{-1}(z)\leqslant 1.
 \end{displaymath}
 Therefore,
 \begin{displaymath}
 |\partial_zF_{u}^{-1}(z)|
 \leqslant (1 -\beta)^{\hat\beta}z^{\hat\beta}.
 \end{displaymath}
 \item Let $F_1$ be the function defined by $F_1(y) := F(1,y)$ for every $y\in [0,1]$. On $]0,F_1(1)[$ :
 \begin{displaymath}
 (G\circ F_{1}^{-1})' =
 \frac{G'\circ F_{1}^{-1}}{F_1'\circ F_{1}^{-1}}.
 \end{displaymath}
 Since $F_{1}^{-1}$ is a $]0,1[$-valued map on $]0,F_1(1)[$, it is sufficient to show that $-G'/F_1'$ is $\mathbb R_{+}^{*}$-valued on $]0,1[$ in order to show that $(G\circ F_{1}^{-1})'$ is $\mathbb R_{-}^{*}$-valued on $]0,F_1(1)[$. For every $y\in ]0,1[$,
 \begin{displaymath}
 -\frac{G'(y)}{F_1'(y)} =
 \theta\left[
 \beta
 \frac{(\mu - y)(1 - 2y)}{y(1-y)} + 1\right].
 \end{displaymath}
 Then, $-G'(y)/F_1'(y) > 0$ if and only if $P(y) > 0$, where
 \begin{eqnarray*}
  P(y) & := &
  (2\beta - 1)y^2 +
  (1-\beta - 2\mu\beta)y +
  \mu\beta\\
  & = &
  (2\beta - 1)\left[
  y + \frac{1-\beta -2\mu\beta}{2(2\beta - 1)}\right]^2 -
  \frac{(1-\beta -2\mu\beta)^2}{4(2\beta - 1)} +\mu\beta.
 \end{eqnarray*}
 On the one hand, $P(0) = \mu\beta > 0$ and $P(1) = \beta(1-\mu) > 0$. Then, for $\beta < 1/2$, $P(y) > P(0)\wedge P(1) > 0$ for every $y\in ]0,1[$.
 \\
 \\
 On the other hand, assume that $\beta > 1/2$ and consider
 \begin{displaymath}
 y^*(\beta,\mu) :=
 -\frac{1-\beta -2\mu\beta}{2(2\beta - 1)}
 \textrm{ and }
 \varphi(\beta,\mu) :=
 -\frac{(1-\beta -2\mu\beta)^2}{4(2\beta - 1)} +
 \mu\beta.
 \end{displaymath}
 If $y^*(\beta,\mu)\not\in ]0,1[$, then $P(y) > P(0)\wedge P(1) > 0$.
 \\
 \\
 If $y^*(\beta,\mu)\in ]0,1[$, then
 \begin{displaymath}
 -\frac{(1-\beta -2\mu\beta)^2}{4(2\beta - 1)} >
 \frac{1-\beta}{2} -\mu\beta.
 \end{displaymath}
 So, $P(y) > P[y^*(\beta,\mu)] = \varphi(\beta,\mu) > (1-\beta)/2 > 0$.
 \\
 \\
 In conclusion, there exists $l > 0$ such that $(G\circ F_{1}^{-1})'(z) < -l$ for every $z\in ]0,F_1(1)[$, because $G'(y)/F_1'(y) < 0$ for every $y\in ]0,1[$ and
 \begin{displaymath}
 \lim_{y\rightarrow 0^+}
 \frac{G'(y)}{F_1'(y)} =
 \lim_{y\rightarrow 1^-}
 \frac{G'(y)}{F_1'(y)} = -\infty.
 \end{displaymath}
\end{enumerate}
\end{proof}
\noindent
On the two following figures, $F$, $F(u,.)$ and $F_{u}^{-1}$ are plotted for several values of $u\in [0,1]$, $y\in ]0,1[$ and $\beta\in ]0,1[$. It is sufficient in order to illustrate the properties of $F$ stated at Proposition \ref{properties_F} :
\begin{figure}[H]
\centering
\includegraphics[scale = 0.45]{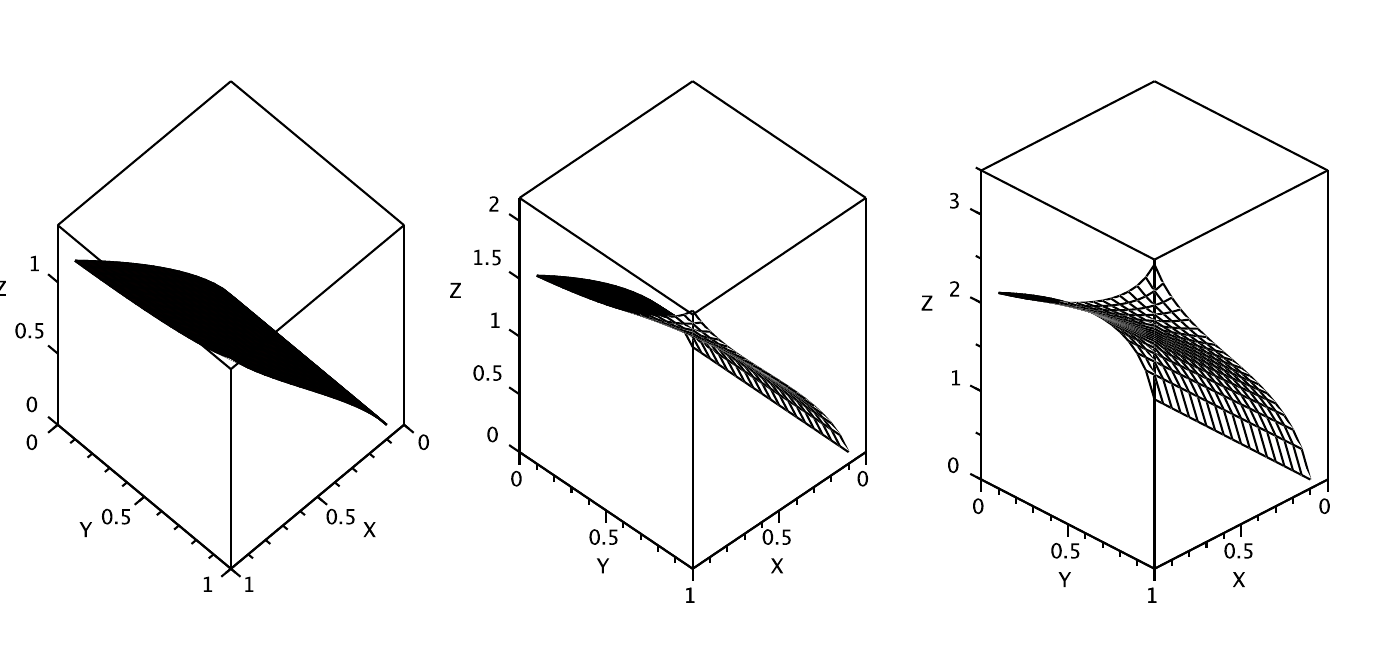}
\caption{Plots of $F$ on $[0,1]\times]0,1[$ for $\beta = 0.25, 0.5, 0.75$}
\end{figure}
\begin{figure}[H]
\centering
\includegraphics[scale = 0.45]{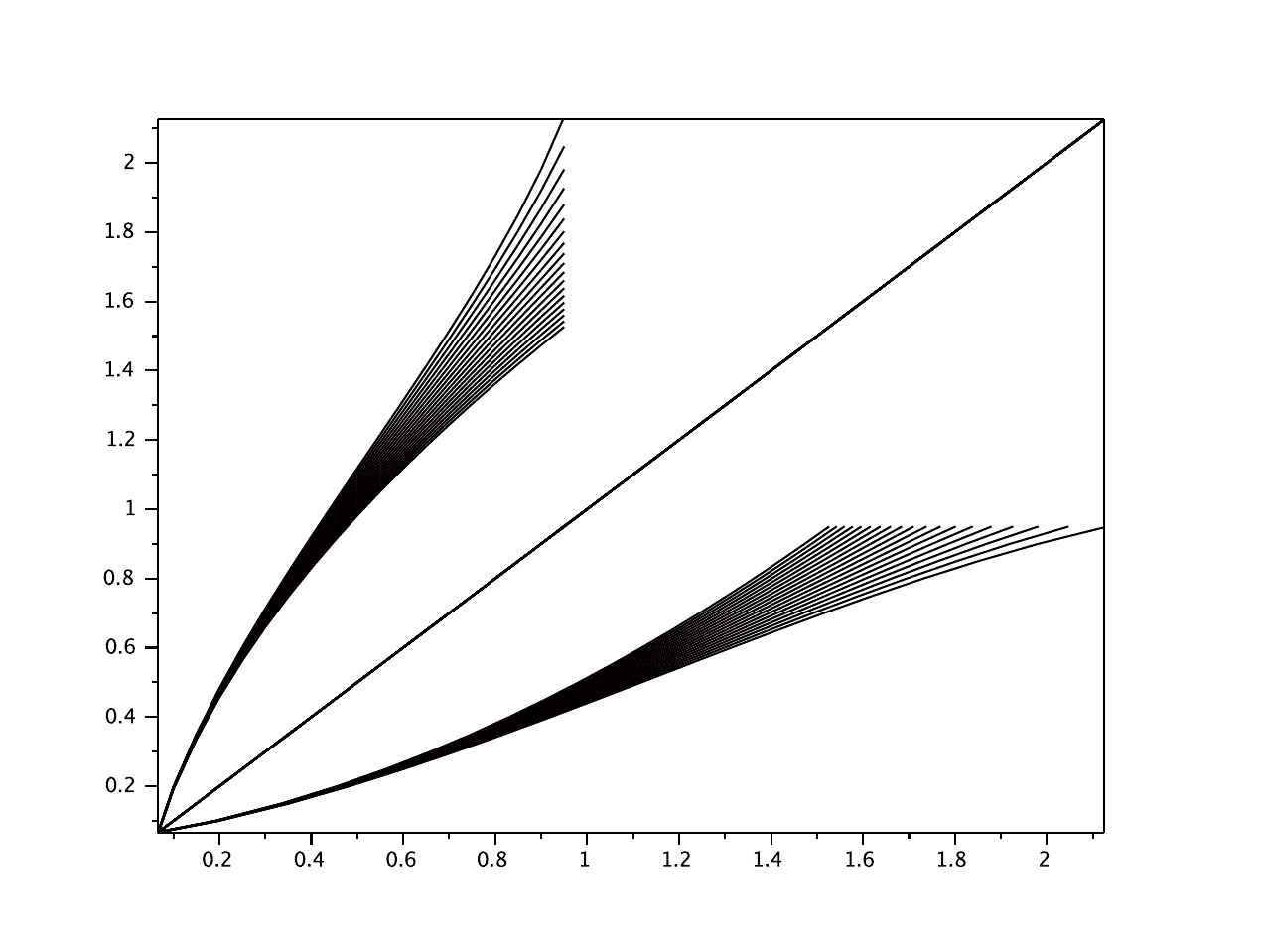}
\caption{Plots of $F(u,.)$ and $F_{u}^{-1}$ for $u\in [0,1]$ and $\beta = 0.5$}
\end{figure}
\noindent
By using the change of variable
\begin{displaymath}
\tilde x_t :=
F\left(e^{-\Theta_t},
e^{\Theta_t}x_t\right)
\textrm{ with }
\Theta_t :=
\int_{0}^{t}\theta_sds
\textrm{ $;$ }\forall t\in [0,\tau_{0,1}],
\end{displaymath}
the following theorem shows that $\tau_{0,1}\notin [0,T]$ :
%

% Theorem : Existence and uniqueness of the solution.

%
\begin{theorem}\label{existence_uniqueness_J}
Under assumptions \ref{assumption_beta}, \ref{auxiliary_assumption} and \ref{signal_regularity}, with initial condition $x_0\in ]0,1[$, equation (\ref{auxiliary_equation}) admits a unique solution $\pi(0,x_0;w)$ on $[0,T]$.
\end{theorem}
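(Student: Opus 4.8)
The plan is to use the announced change of variable to turn equation (\ref{auxiliary_equation}) into a rough differential equation with \emph{additive} noise whose drift points strongly inward at both boundaries, and to deduce from this that $\tau_{0,1}\notin [0,T]$. First I would record the scaling identity $F(u,y) = u^{-(1-\beta)}F(1,uy)$, obtained by the substitution $s = uv$ in the defining integral of $F$; it gives the compact form
\begin{displaymath}
\tilde x_t = F\left(e^{-\Theta_t},e^{\Theta_t}x_t\right) = e^{(1-\beta)\Theta_t}F(1,x_t).
\end{displaymath}
On $[0,\tau_{\varepsilon,1-\varepsilon}]$ the solution stays in $[\varepsilon,1-\varepsilon]$, where $A\mapsto [A(1-A)]^{\beta}$ is smooth with bounded derivatives, so the change-of-variable formula for rough differential equations (cf. \cite{FV10}) applies there. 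Differentiating, using $\partial_yF(1,x) = [x(1-x)]^{-\beta}$ and the vector field of (\ref{auxiliary_equation}), the multiplicative factor $[x_t(1-x_t)]^{\beta}$ cancels against $\partial_yF(1,x_t)$ and one obtains
\begin{displaymath}
d\tilde x_t = \left[(1-\beta)\theta_t\tilde x_t + e^{(1-\beta)\Theta_t}G_t\!\left(F^{-1}\!\left(1,e^{-(1-\beta)\Theta_t}\tilde x_t\right)\right)\right]dt + e^{(1-\beta)\Theta_t}\gamma_t\theta_t^{\beta}\,dw_t,
\end{displaymath}
where $G_t$ is the map $G$ of Proposition \ref{properties_F}.(6) built with $(\theta_t,\mu_t)$. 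Letting $\varepsilon\rightarrow 0$, this holds on $[0,\tau_{0,1}[$.

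Since the noise is additive with a coefficient that is $C^1$ in $t$, I would set $\eta_t := \int_0^t e^{(1-\beta)\Theta_s}\gamma_s\theta_s^{\beta}\,dw_s$, a Young integral which is $\alpha$-Hölder and bounded on $[0,T]$, and write $\tilde x_t = v_t + \eta_t$, where $v$ solves a genuine pathwise ODE with right-hand side continuous in $t$; all its coefficients are bounded on $[0,T]$ because $\theta,\mu,\gamma$ are continuous and $\Theta_T <\infty$. For each fixed $t$ the parameters $(\theta_t,\mu_t)$ are constants, so Proposition \ref{properties_F}.(6) applies to $G_t$ and its proof shows that $G_t\circ F^{-1}(1,\cdot)$ is strictly decreasing, tends to $+\infty$ as its argument tends to $0^+$ and to $-\infty$ as it tends to $F(1,1)^-$, uniformly for $t\in [0,T]$ by compactness. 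Hence the drift of $\tilde x_t$ tends to $+\infty$ at the lower boundary $\tilde x_t = 0$ and to $-\infty$ at the upper boundary $\tilde x_t = e^{(1-\beta)\Theta_t}F(1,1)$; this inward-pointing drift is the non-explosion mechanism.

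The crux is then an a priori estimate showing that $\tilde x_t$ remains in a compact subinterval of $]0,e^{(1-\beta)\Theta_t}F(1,1)[$ throughout $[0,T]$, which yields $\tau_{0,1}>T$. I would argue by a comparison/barrier argument at each boundary, and it is exactly here that Assumption \ref{assumption_beta} enters. Near $x=0$ one has $F(1,x)\sim x^{1-\beta}/(1-\beta)$, so $\tilde x\sim x^{1-\beta}$, and, using the bound $|\partial_zF^{-1}(1,z)|\leqslant (1-\beta)^{\hat\beta}z^{\hat\beta}$ of Proposition \ref{properties_F}.(5), the drift behaves like $z^{-\hat\beta}$ with $\hat\beta = \beta/(1-\beta)$. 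The deterministic part of the dynamics therefore repels $\tilde x$ from $0$ at rate $t^{1/(1+\hat\beta)} = t^{1-\beta}$, whereas the additive noise $\eta$ can move it only at rate $t^{\alpha}$; since $\beta > 1-\alpha$, i.e. $1-\beta<\alpha$, the repulsion dominates and $\tilde x_t$, hence $x_t$, cannot reach $0$ in finite time, and symmetrically it cannot reach the upper boundary. Making this matching of the drift's blow-up rate against the Hölder roughness of the noise rigorous is the main obstacle of the proof.

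Finally, for uniqueness on $[0,T]$: on each $[0,\tau_{\varepsilon,1-\varepsilon}]$ the vector field of (\ref{auxiliary_equation}) is smooth with bounded derivatives on $[\varepsilon,1-\varepsilon]$, so the local solution is unique there (\cite{FV10}, Exercise 10.56); since $\tau_{\varepsilon,1-\varepsilon}\uparrow\tau_{0,1}>T$ and the change of variable is a bijection carrying solutions to solutions, uniqueness propagates to the whole of $[0,T]$. Together with the global existence obtained above, this produces the unique solution $\pi(0,x_0;w)$ on $[0,T]$.
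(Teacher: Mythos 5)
Your setup is exactly the paper's: the same change of variable $\tilde x_t = F(e^{-\Theta_t},e^{\Theta_t}x_t)$ (your scaling identity $F(u,y)=u^{-(1-\beta)}F(1,uy)$ is correct and your additive-noise equation for $\tilde x$ agrees, after unpacking, with the paper's equation (\ref{auxiliary_equation_tilde_x}), with the same noise $w^{\vartheta}$, $\vartheta_t=\gamma_t\theta_t^{\beta}e^{(1-\beta)\Theta_t}$), the same reduction of the upper boundary to the lower one via $\hat x = 1-x$, and the same localization argument for uniqueness. The heuristic you give for why Assumption \ref{assumption_beta} prevents explosion --- drift repelling from $0$ at rate $t^{1-\beta}$ versus noise increments of order $t^{\alpha}$, with $1-\beta<\alpha$ --- is also precisely the mechanism the paper exploits.

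However, there is a genuine gap, and you flag it yourself: the ``comparison/barrier argument'' that turns this rate-matching heuristic into a proof is never carried out, and it is the entire content of the theorem. You do not construct a barrier, state a comparison principle for the perturbed ODE, or produce any a priori inequality; everything before that point is bookkeeping that was already available from \cite{FV10}, Exercice 10.56 on $[0,\tau_{\varepsilon,1-\varepsilon}]$. The paper closes this gap without any barrier construction, by a short backward-in-time estimate: assuming $\tau_0\in[0,T]$, it integrates the additive-noise equation from $t$ to $\tau_0$ and rearranges it so that the left-hand side is the sum of the two \emph{nonnegative} terms $\tilde x_t$ and $\int_t^{\tau_0}\mu_s\theta_s e^{\Theta_s}[\partial_yF^{-1}(e^{-\Theta_s},\tilde x_s)]^{-1}\,ds$, while the right-hand side is bounded by $C(\tau_0-t)^{\alpha}$ (H\"older continuity of $w^{\vartheta}$ plus a bounded drift term). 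Dropping one term gives $\tilde x_s\leqslant C(\tau_0-s)^{\alpha}$; feeding this into Proposition \ref{properties_F}.(5) bounds $\partial_yF^{-1}(e^{-\Theta_s},\tilde x_s)$ by a constant times $(\tau_0-s)^{\alpha\hat\beta}$, and dropping the other term then forces $\int_t^{\tau_0}(\tau_0-s)^{-\alpha\hat\beta}ds\leqslant C'(\tau_0-t)^{\alpha}$. Since Assumption \ref{assumption_beta} gives $\alpha\hat\beta>1-\alpha$, the left-hand side is of strictly larger order than the right-hand side as $t\uparrow\tau_0$ (or diverges outright if $\alpha\hat\beta\geqslant 1$), a contradiction. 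This self-improving estimate --- first bounding $\tilde x$ near $\tau_0$ by the modulus of continuity of the noise, then using that bound to make the drift integral blow up --- is the missing idea; without it your argument remains a plausibility claim rather than a proof.
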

%

% Proof.

%
\begin{proof}
For $x_0\in ]0,1[$ and $\varepsilon\in ]0,x_0]$, let $x$ be the solution of equation (\ref{auxiliary_equation}) on $[0,\tau_{\varepsilon,1-\varepsilon}]$ with initial condition $x_0$. Then, $(e^{-\Theta_t},e^{\Theta_t}x_t)\in D$ for every $t\in [0,\tau_{\varepsilon,1-\varepsilon}]$. By applying the change of variable formula (cf. \cite{LQ02}, Theorem 5.4.1) to $(e^{-\Theta},e^{\Theta}x)$ and to the map $F$ between $0$ and $t\in [0,\tau_{\varepsilon,1-\varepsilon}]$ :
\begin{eqnarray*}
 \tilde x_t - \tilde x_0 & = &
 \int_{0}^{t}
 \partial_u F\left(e^{-\Theta_s},e^{\Theta_s}x_s\right)de^{-\Theta_s} +
 \int_{0}^{t}
 \partial_y F\left(e^{-\Theta_s},e^{\Theta_s}x_s\right)d\left(e^{\Theta}x\right)_s\\
 & = &
 -\beta\int_{0}^{t}
 \theta_se^{-\Theta_s}
 \int_{0}^{F_{e^{-\Theta_s}}^{-1}(\tilde x_s)}
 v^2\left[v\left(1 - e^{-\Theta_s}v\right)\right]^{-(\beta + 1)}dvds +\\
 & &
 \int_{0}^{t}
 \partial_y F\left(e^{-\Theta_s},e^{\Theta_s}x_s\right)d\left(e^{\Theta}x\right)_s
\end{eqnarray*}
with $\tilde x_0 := F(1,x_0)$. Moreover,
\begin{eqnarray*}
 \int_{0}^{t}
 \partial_y F\left(e^{-\Theta_s},e^{\Theta_s}x_s\right)d\left(e^{\Theta}x\right)_s
 & = &
 \int_{0}^{t}
 \partial_y F\left(e^{-\Theta_s},e^{\Theta_s}x_s\right)
 \left(\theta_se^{\Theta_s}x_sds + e^{\Theta_s}dx_s\right)\\
 & = &
 w_{t}^{\vartheta} +
 \int_{0}^{t}
 \partial_y F\left(e^{-\Theta_s},e^{\Theta_s}x_s\right)
 \mu_s\theta_se^{\Theta_s}ds\\
 & = &
 w_{t}^{\vartheta} +
 \int_{0}^{t}
 \frac{\mu_s\theta_se^{\Theta_s}}
 {\partial_y F_{e^{-\Theta_s}}^{-1}\left(\tilde x_s\right)}ds
\end{eqnarray*}
where,
\begin{displaymath}
w_{t}^{\vartheta} :=
\int_{0}^{t}\vartheta_sdw_s
\textrm{ with }
\vartheta_t :=
\theta_{t}^{\beta}\gamma_t
e^{(1 -\beta)\Theta_t}.
\end{displaymath}
Then, $\tilde x$ is the solution of the following differential equation with additive \mbox{noise $w^{\vartheta}$ :}
\begin{eqnarray}
 \label{auxiliary_equation_tilde_x}
 \tilde x_t - \tilde x_0 & = &
 -\beta\int_{0}^{t}
 \theta_se^{-\Theta_s}
 \int_{0}^{F_{e^{-\Theta_s}}^{-1}(\tilde x_s)}
 v^2\left[v\left(1 - e^{-\Theta_s}v\right)\right]^{-(\beta + 1)}dvds +\\
 & &
 \int_{0}^{t}
 \frac{\mu_s\theta_se^{\Theta_s}}
 {\partial_y F_{e^{-\Theta_s}}^{-1}\left(\tilde x_s\right)}ds +
 w_{t}^{\vartheta}
 \nonumber
\end{eqnarray}
for every $t\in [0,\tau_{\varepsilon,1-\varepsilon}]$. When $\varepsilon\rightarrow 0$ :
\begin{itemize}
 \item If $\tau_{0,1} = \tau_0$, for $t\in [0,\tau_0[$ :
 \begin{eqnarray}
  \label{solving_decomposition}
  \tilde x_t +
  \int_{t}^{\tau_0}
  \frac{\mu_s\theta_se^{\Theta_s}}
  {\partial_y F_{e^{-\Theta_s}}^{-1}\left(\tilde x_s\right)}ds
  & = & \\
  w_{t}^{\vartheta} - w_{\tau_0}^{\vartheta} +
  \beta\int_{t}^{\tau_0}
  \theta_se^{-\Theta_s}
  \int_{0}^{F_{e^{-\Theta_s}}^{-1}(\tilde x_s)}
  v^2\left[v\left(1 - e^{-\Theta_s}v\right)\right]^{-(\beta + 1)}dvds. & &
  \nonumber
 \end{eqnarray}
 Since $w^{\vartheta} : [0,T]\rightarrow\mathbb R$ is $\alpha$-H\"older continuous, the right-hand side of (\ref{solving_decomposition}) is less or equal than $C(\tau_0 - t)^{\alpha}$ with
 \begin{displaymath}
 C :=
 \|w^{\vartheta}\|_{\alpha;T} +
 \beta\|\theta\|_{\infty;T}
 T^{1-\alpha}
 \int_{0}^{1}
 v^2[v(1 - v)]^{-(\beta + 1)}dv.
 \end{displaymath}
 The two terms of the sum of the left-hand side in equation (\ref{solving_decomposition}) are positive, then $\tilde x_s\leqslant C(\tau_0 - s)^{\alpha}$ for every $s\in [0,\tau_0[$, and by Proposition \ref{properties_F}.(5) :
 \begin{eqnarray*}
  \partial_y F_{e^{-\Theta_s}}^{-1}(\tilde x_s)
  & \leqslant &
  \partial_y F_{0}^{-1}(\tilde x_s) =
  (1-\beta)^{\hat\beta}\tilde x_{s}^{\hat\beta}\\
  & \leqslant &
  (1-\beta)^{\hat\beta}
  C^{\hat\beta}
  (\tau_0 - s)^{\alpha\hat\beta}.
 \end{eqnarray*}
 Therefore,
 \begin{displaymath}
 (1-\beta)^{-\hat\beta}C^{-\hat\beta}
 \left(
 \min_{s\in [0,T]}\mu_s\theta_s\right)
 \int_{t}^{\tau_0}
 (\tau_0 - s)^{-\alpha\hat\beta}ds
 \leqslant
 C(\tau_0 - t)^{\alpha}.
 \end{displaymath}
 Under Assumption \ref{assumption_beta}, the previous inequality gives $\tau_0\not\in [0,T]$.
 \item If $\tau_{0,1} = \tau_1$, consider $\hat x_t := 1 - x_t$ for each $t\in [0,\tau_1[$. The function $\hat x$ satisfies
 \begin{displaymath}
 d\hat x_t =
 -\theta_t(\hat x_t - \hat\mu_t)dt +
 \hat\gamma_t[\theta_t[\hat x_t(1-\hat x_t)]]^{\beta}dw_t
 \end{displaymath}
 with $\hat\mu_t := 1 - \mu_t$ and $\hat\gamma_t := -\gamma_t$.
 \\
 In other words, $\hat x$ is the solution of equation (\ref{auxiliary_equation}) with these new coefficients, also satisfying Assumption \ref{auxiliary_assumption}. Then, under Assumption \ref{assumption_beta} :
 \begin{displaymath}
 \tau_1 =
 \inf\left\{t\in [0,T] : \hat x_t = 0\right\}
 \not\in [0,T].
 \end{displaymath}
\end{itemize}
The solution $x$ doesn't hit $0$ or $1$ on $[0,T]$ because $\tau_{0,1}\not\in [0,T]$. Therefore, equation (\ref{auxiliary_equation}) admits a unique $]0,1[$-valued solution $\pi(0,x_0;w)$ on $[0,T]$.
\end{proof}
\noindent
Let $w : \mathbb R_+\rightarrow\mathbb R$ be a function satisfying the following assumption :
%

% Assumption : Regularity of the signal on $\mathbb R_+$.

%
\begin{assumption}\label{signal_regularity_R_+}
The function $w$ is $\alpha$-H\"older continuous on the compact intervals of $\mathbb R_+$ ($\alpha\in ]0,1]$) and $w_0 = 0$.
\end{assumption}
%

% Corollary : Existence and uniqueness of the solution on $\mathbb R_+$.

%
\begin{corollary}\label{existence_uniqueness_J_R_+}
Under assumptions \ref{assumption_beta}, \ref{auxiliary_assumption} and \ref{signal_regularity_R_+}, equation (\ref{auxiliary_equation}) admits a unique solution, $\alpha$-H\"older continuous on the compact intervals of $\mathbb R_+$, and still denoted by $\pi(0,x_0;w)$.
\end{corollary}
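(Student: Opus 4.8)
The plan is to build the global solution by exhausting $\mathbb R_+$ with the compact intervals $[0,n]$, $n\geqslant 1$, and gluing the local solutions produced by Theorem \ref{existence_uniqueness_J}. First I would observe that, by Assumption \ref{signal_regularity_R_+}, the restriction $w|_{[0,n]}$ is $\alpha$-H\"older continuous on $[0,n]$ and hence satisfies Assumption \ref{signal_regularity}. Therefore Theorem \ref{existence_uniqueness_J} applies on each $[0,n]$ and delivers a unique $]0,1[$-valued solution $x^{(n)} := \pi(0,x_0;w|_{[0,n]})$ of equation (\ref{auxiliary_equation}) on $[0,n]$, with the same initial condition $x_0\in ]0,1[$.

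The key step is a compatibility argument. For integers $m < n$, the restriction $x^{(n)}|_{[0,m]}$ solves equation (\ref{auxiliary_equation}) on $[0,m]$ with initial condition $x_0$, because the rough integral appearing in (\ref{auxiliary_equation}) localizes: in dimension one the step-$2$ rough lift of $w$ is canonically determined, so restricting the driving signal to $[0,m]$ restricts its lift, and the integrals over $[0,t]$ for $t\leqslant m$ are left unchanged. The uniqueness part of Theorem \ref{existence_uniqueness_J} then forces $x^{(n)}|_{[0,m]} = x^{(m)}$. Consequently the prescription $x_t := x^{(n)}_t$ for any $n\geqslant t$ is unambiguous and defines a map $x : \mathbb R_+\rightarrow ]0,1[$.

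It remains to check that $x$ has the announced properties. For every $T > 0$, choosing an integer $n\geqslant T$ shows that $x|_{[0,T]} = x^{(n)}|_{[0,T]}$ solves (\ref{auxiliary_equation}) on $[0,T]$ and is $\alpha$-H\"older continuous there, since $x^{(n)}\in C^{\alpha}([0,n];]0,1[)$ by Theorem \ref{existence_uniqueness_J}. As this holds for each $T$, the map $x$ is $]0,1[$-valued and $\alpha$-H\"older continuous on every compact interval of $\mathbb R_+$, which is precisely the statement $x\in C^{\alpha}(\mathbb R_+;]0,1[)$ from the notations. For uniqueness on $\mathbb R_+$, any solution $\bar x$ of (\ref{auxiliary_equation}) on $\mathbb R_+$ restricts, for each $n$, to a solution on $[0,n]$, whence $\bar x|_{[0,n]} = x^{(n)} = x|_{[0,n]}$ by Theorem \ref{existence_uniqueness_J}; letting $n\rightarrow\infty$ gives $\bar x = x$.

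I expect the only genuinely delicate point to be the verification that a rough solution restricts consistently to subintervals, which is why I emphasize the one-dimensionality of the signal and the canonical nature of its lift; once this is granted, the exhaustion and gluing are routine, and the H\"older regularity on $\mathbb R_+$ follows directly from the compact-interval estimate already contained in Theorem \ref{existence_uniqueness_J}.
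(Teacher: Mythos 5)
Your proof is correct and follows essentially the same route as the paper, which simply defines $\pi(0,x_0;w)|_{[0,T]} := \pi(0,x_0;w|_{[0,T]})$ for each $T>0$ and invokes Theorem \ref{existence_uniqueness_J}; you merely make explicit the consistency-of-restrictions check that the paper leaves implicit.
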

%

% Proof.

%
\begin{proof}
By Theorem \ref{existence_uniqueness_J}, equation (\ref{auxiliary_equation}) admits a unique solution on $\mathbb R_+$ by putting
\begin{displaymath}
\pi(0,x_0;w)|_{[0,T]} :=
\pi(0,x_0;w|_{[0,T]})
\end{displaymath}
for each $T > 0$. Since $\pi(0,x_0;w|_{[0,T]})$ is $\alpha$-H\"older continuous on $[0,T]$ for every $T > 0$, $\pi(0,x_0;w)$ is $\alpha$-H\"older continuous on the compact intervals of $\mathbb R_+$ by construction.
\end{proof}
%

% Subsection : Regularity of the It map.

%
\subsection{Regularity of the It\^o map}
In a first part, propositions \ref{Ito_map_continuity} and \ref{continuous_differentiability} extend the existing regularity results for the It\^o map (cf. \cite{FV10}, chapters 10 and 11) to equation (\ref{auxiliary_equation}), which has a singular vector field. Moreover, at Proposition \ref{Ito_map_continuity}, it is proved that $\pi(0,x_0;.)$ is (globally) Lipschitz continuous.
\\
In a second part, still by using the particular form of the vector field of equation (\ref{auxiliary_equation}), corollaries \ref{monotonicity_initial_condition}, \ref{extension_by_continuity_y} and \ref{Ito_map_Lipschitz} provide some properties of $\pi(0,.;w)$ that will be essential to study the ergodicity of the process $X$ at Subsection 3.1.
\\
\\
In the sequel, the parameters $\theta$, $\mu$ and $\gamma$ are constant, and satisfy the following assumption :
%

% Assumptions on coefficients of the auxiliary equation.

%
\begin{assumption}\label{auxiliary_assumption_cst}
$\theta$, $\mu$ and $\gamma$ are three deterministic constants such that $\theta > 0$, $\mu\in ]0,1[$ and $\gamma\in\mathbb R$.
\end{assumption}
\noindent
In this subsection, the results on equation (\ref{auxiliary_equation}) are obtained via the simple change of variable $y_t := F(x_t)$, where $F(x) := F(1,x)$ for every $x\in [0,1]$.
\\
By applying the change of variable formula (cf. \cite{LQ02}, Theorem 5.4.1) to the function $x$ and to the map $F$ between $0$ and $t\in\mathbb R_+$ :
\begin{equation}\label{auxiliary_equation_y}
y_t =
y_0 +
\int_{0}^{t}
(G\circ F^{-1})(y_s)ds +
\gamma\theta^{\beta}w_t.
\end{equation}
with $G(x) := \theta(\mu - x)F'(x)$ for every $x\in ]0,1[$.
%

% Proposition : Continuity of the It map.

%
\begin{proposition}\label{Ito_map_continuity}
Under assumptions \ref{assumption_beta}, \ref{signal_regularity_R_+} and \ref{auxiliary_assumption_cst}, the It\^o map $\pi(0,.)$ is continuous from
\begin{displaymath}
]0,1[\times C^{\alpha}
\left(\mathbb R_+;\mathbb R\right)
\textrm{ into }
C^0\left(\mathbb R_+;]0,1[\right).
\end{displaymath}
Moreover, for every $T > 0$, $0 < x_{0}^{1}\leqslant x_{0}^{2} < 1$ and $w^1,w^2\in C^{\alpha}([0,T];\mathbb R)$,
\begin{displaymath}
\|\pi(0,x_{0}^{1} ; w^1) -\pi(0,x_{0}^{2} ; w^2)\|_{\infty ; T}
\leqslant
C_T(x_{0}^{1},x_{0}^{2})(|x_{0}^{1} - x_{0}^{2}| +
\|w^1 - w^2\|_{\alpha ; T})
\end{displaymath}
with $C_T(x_{0}^{1},x_{0}^{2}) := \|(F^{-1})'\|_{\infty ; [0,F(1)]}[\|F'\|_{\infty ; [x_{0}^{1},x_{0}^{2}]}\vee (2T^{\alpha}\gamma\theta^{\beta})]$.
\end{proposition}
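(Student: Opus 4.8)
The plan is to push every estimate through the change of variable $y_t := F(x_t)$ introduced before the statement, which turns (\ref{auxiliary_equation}) into the equation with \emph{additive} forcing
\[
y_t = y_0 + \int_0^t (G\circ F^{-1})(y_s)\,ds + \gamma\theta^{\beta}w_t,
\]
and to exploit that $g := G\circ F^{-1}$ is strictly decreasing on $]0,F(1)[$ by Proposition \ref{properties_F}.(6). Since the noise is additive, no rough-path machinery is needed at this level: the map $t\mapsto y_t-\gamma\theta^{\beta}w_t$ is $C^1$. The whole statement then follows from a single a priori estimate in the $y$-variable, transferred back to $x$ through the bounds $|F(x_0^1)-F(x_0^2)|\leqslant\|F'\|_{\infty;[x_0^1,x_0^2]}|x_0^1-x_0^2|$ and $|x_t^1-x_t^2|=|F^{-1}(y_t^1)-F^{-1}(y_t^2)|\leqslant\|(F^{-1})'\|_{\infty;[0,F(1)]}|y_t^1-y_t^2|$, both legitimate because $x_0^1,x_0^2\in\,]0,1[$ and every solution stays in $]0,1[$ with values keeping $y$ inside $]0,F(1)[$, by Corollary \ref{existence_uniqueness_J_R_+}.

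For the core estimate I would fix $T>0$ and two solutions $y^1,y^2$ with data $(F(x_0^i),w^i)$, and set $z_t := (y_t^1-y_t^2)-\gamma\theta^{\beta}(w_t^1-w_t^2)$, so that $z$ is differentiable with $z_t' = g(y_t^1)-g(y_t^2)$ and $z_0 = F(x_0^1)-F(x_0^2)$. Writing $K := |\gamma\theta^{\beta}|\,\|w^1-w^2\|_{\infty;T}$, the key observation is that whenever $|z_t|>K$ the sign of $y_t^1-y_t^2 = z_t+\gamma\theta^{\beta}(w_t^1-w_t^2)$ agrees with the sign of $z_t$, so by strict monotonicity of $g$ one gets $z_tz_t'\leqslant0$. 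A first-crossing argument (if $z$ exceeded $\max(|z_0|,K)$ it would be non-increasing on the offending interval, a contradiction) then yields $|z_t|\leqslant\max(|z_0|,K)$ on $[0,T]$, hence $|y_t^1-y_t^2|\leqslant|z_t|+K\leqslant\max(|z_0|,K)+K$.

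It then remains to bound $K\leqslant|\gamma\theta^{\beta}|\,T^{\alpha}\|w^1-w^2\|_{\alpha;T}$ (the common initial value of the signals makes $w^1-w^2$ vanish at $0$, so its uniform norm is controlled by $T^{\alpha}$ times its $\alpha$-H\"older norm) and, writing $a:=|z_0|\leqslant\|F'\|_{\infty;[x_0^1,x_0^2]}|x_0^1-x_0^2|$, to verify the elementary inequality $\max(a,K)+K\leqslant(\|F'\|_{\infty;[x_0^1,x_0^2]}\vee 2|\gamma\theta^{\beta}|T^{\alpha})(|x_0^1-x_0^2|+\|w^1-w^2\|_{\alpha;T})$ by distinguishing the cases $a\geqslant K$ and $a<K$. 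Multiplying through by $\|(F^{-1})'\|_{\infty;[0,F(1)]}$ reproduces exactly the constant $C_T(x_0^1,x_0^2)$ and the displayed bound. Continuity of $\pi(0,\cdot)$ into $C^0(\mathbb R_+;]0,1[)$ is then immediate from the target's topology of uniform convergence on compacts: for $(x_0^n,w^n)\to(x_0,w)$ the factor $C_T$ stays bounded, because the interval $[x_0^n\wedge x_0,x_0^n\vee x_0]$ shrinks to $\{x_0\}\subset\,]0,1[$ where $F'$ is finite, while $|x_0^n-x_0|+\|w^n-w\|_{\alpha;T}\to0$ for every $T$.

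The main obstacle is securing a \emph{global-in-time}, $T^{\alpha}$-sharp Lipschitz constant: a direct Gronwall argument on $|y^1-y^2|$ would manufacture a spurious exponential factor, and the additive but only $\alpha$-H\"older noise cannot be differentiated. Both difficulties are resolved by the substitution $z$ combined with the strict monotonicity of $g$ from Proposition \ref{properties_F}.(6), which replaces Gronwall's inequality by the dissipativity property $z_tz_t'\leqslant0$ valid whenever $|z_t|>K$.
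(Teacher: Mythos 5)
Your proof is correct and yields exactly the stated constant, but the core estimate is organized differently from the paper's. The paper also passes to $y=F(x)$ and leans on the monotonicity of $G\circ F^{-1}$ from Proposition \ref{properties_F}.(6) precisely to avoid Gronwall, but it works directly with the difference $y^1-y^2$ and partitions $[0,T]$ by the crossing times of the two solutions: on $[0,\tau_{\textrm{cross}}]$ the sign of $y^1-y^2$ is constant so the drift term is signed and one gets $|y_0^1-y_0^2|+\gamma\theta^{\beta}\|w^1-w^2\|_{\infty;T}$, and on $[\tau_{\textrm{cross}},T]$ one restarts from the last crossing time $\tau_{\textrm{cross}}(t)$ to get $2\gamma\theta^{\beta}\|w^1-w^2\|_{\infty;T}$; the two cases combine into the same bound $|y_0^1-y_0^2|+2T^{\alpha}\gamma\theta^{\beta}\|w^1-w^2\|_{\alpha;T}$ that you obtain. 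You instead subtract the noise, observe that $z:=(y^1-y^2)-\gamma\theta^{\beta}(w^1-w^2)$ is $C^1$ with $z'=g(y^1)-g(y^2)$, and prove the maximum principle $|z_t|\leqslant\max(|z_0|,K)$ via the dissipativity $z_tz_t'\leqslant 0$ on $\{|z_t|>K\}$ and a first-crossing argument. Your route trades the paper's case analysis on the relative position of the two trajectories for a single scalar ODE-type barrier argument; it is arguably cleaner (no bookkeeping of $\tau_{\textrm{cross}}(t)$, and it handles the sign of $\gamma$ explicitly through $|\gamma\theta^{\beta}|$, which the paper glosses over), while the paper's version makes the "signed drift on sign-constant intervals" mechanism more visible. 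Both rely on the same two external inputs (global existence keeping $y$ in $]0,F(1)[$, and strict decrease of $G\circ F^{-1}$), both use the convention $(w^1-w^2)_0=0$ to convert the uniform norm of the noise difference into $T^{\alpha}\|w^1-w^2\|_{\alpha;T}$, and both conclude the continuity statement by letting the local Lipschitz constant stabilize as $x_0^n\to x_0$ in $]0,1[$.
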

%

% Proof.

%
\begin{proof}
For $i = 1,2$, consider $x_{0}^{i}\in ]0,1[$ and $w^i :\mathbb R_+\rightarrow\mathbb R$ a function satisfying Assumption \ref{signal_regularity_R_+}. Under assumptions \ref{assumption_beta} and \ref{auxiliary_assumption_cst}, let $x^i$ be the solution of equation (\ref{auxiliary_equation}) with initial condition $x_{0}^{i}$ and signal $w^i$, and put $y^i := F(x_{.}^{i})$.
\\
\\
The first step shows that the It\^o map associated to $y^1$ and $y^2$ is Lipschitz continuous from
\begin{displaymath}
]0,F(1)[\times C^{\alpha}
\left([0,T];\mathbb R\right)
\textrm{ into }
C^0\left([0,T];]0,F(1)[\right).
\end{displaymath}
At the second step, the expected results on $\pi(0,.)$ are deduced from the first one.
\\
\\
Let $T > 0$ be arbitrarily chosen.
\\
\\
\textbf{Step 1.} On the one hand, consider $t\in [0,\tau_{\textrm{cross}}]$ where
\begin{displaymath}
\tau_{\textrm{cross}} :=
\inf\left\{s\in [0,T] : y_{s}^{1} = y_{s}^{2}\right\},
\end{displaymath}
and suppose that $y_{0}^{1}\geqslant y_{0}^{2}$.
\\
\\
Since $y^1$ and $y^2$ are continuous on $[0,T]$ by construction, for every $s\in [0,\tau_{\textrm{cross}}]$, $y_{s}^{1}\geqslant y_{s}^{2}$ and then,
\begin{displaymath}
(G\circ F^{-1})(y_{s}^{1}) -
(G\circ F^{-1})(y_{s}^{2})\leqslant 0
\end{displaymath}
because $G\circ F^{-1}$ is a decreasing map (cf. Proposition \ref{properties_F}.(6)). Therefore,
\begin{eqnarray*}
 |y_{t}^{1} - y_{t}^{2}| & = &
 y_{t}^{1} - y_{t}^{2}\\
 & = &
 y_{0}^{1} - y_{0}^{2} +
 \int_{0}^{t}
 [(G\circ F^{-1})(y_{s}^{1}) -
 (G\circ F^{-1})(y_{s}^{2})]ds +
 \gamma\theta^{\beta}(w_{t}^{1} - w_{t}^{2})\\
 & \leqslant &
 |y_{0}^{1} - y_{0}^{2}| +
 \gamma\theta^{\beta}
 \|w^1 - w^2\|_{\infty;T}.
\end{eqnarray*}
Symmetrically, one can show that this inequality is still true when $y_{0}^{1}\leqslant y_{0}^{2}$.
\\
\\
On the other hand, consider $t\in [\tau_{\textrm{cross}},T]$,
\begin{displaymath}
\tau_{\textrm{cross}}(t) :=
\sup\left\{s\in\left[\tau_{\textrm{cross}},t\right] : y_{s}^{1} = y_{s}^{2}\right\}
\end{displaymath}
and suppose that $y_{t}^{1}\geqslant y_{t}^{2}$.
\\
\\
Since $y^1$ and $y^2$ are continuous on $[0,T]$ by construction, for every $s\in [\tau_{\textrm{cross}}(t),t]$, $y_{s}^{1}\geqslant y_{s}^{2}$ and then,
\begin{displaymath}
(G\circ F^{-1})(y_{s}^{1}) -
(G\circ F^{-1})(y_{s}^{2})\leqslant 0
\end{displaymath}
because $G\circ F^{-1}$ is a decreasing map. Therefore,
\begin{eqnarray*}
 |y_{t}^{1} - y_{t}^{2}| & = &
 y_{t}^{1} - y_{t}^{2}\\
 & = &
 \int_{\tau_{\textrm{cross}}(t)}^{t}
 [(G\circ F^{-1})(y_{s}^{1}) -
 (G\circ F^{-1})(y_{s}^{2})]ds +\\
 & &
 \gamma\theta^{\beta}
 (w_{t}^{1} - w_{t}^{2}) -
 \gamma\theta^{\beta}
 [w_{\tau_{\textrm{cross}}(t)}^{1} -
 w_{\tau_{\textrm{cross}}(t)}^{2}]\\
 & \leqslant &
 2\gamma\theta^{\beta}\|w^1 - w^2\|_{\infty;T}.
\end{eqnarray*}
Symmetrically, one can show that this inequality is still true when $y_{t}^{1}\leqslant y_{t}^{2}$.
\\
\\
By putting these cases together and since the obtained upper-bounds are not depending on $t$ :
\begin{equation}\label{Lipschitz_auxiliary_y}
\|y^1 - y^2\|_{\infty;T}
\leqslant
|y_{0}^{1} - y_{0}^{2}| +
2T^{\alpha}\gamma\theta^{\beta}\|w^1 - w^2\|_{\alpha;T}.
\end{equation}
Then, the It\^o map associated to $y^1$ and $y^2$ is Lipschitz continuous from
\begin{displaymath}
]0,F(1)[\times C^{\alpha}
\left([0,T];\mathbb R\right)
\textrm{ into }
C^0\left([0,T];]0,F(1)[\right).
\end{displaymath}
\textbf{Step 2.} Since $x^i = F^{-1}(y_{.}^{i})$, $F^{-1}$ is continuously differentiable from $[0,F(1)]$ into $[0,1]$, and $F$ is continuously differentiable from $]0,1[$ into $]0,F(1)[$, by inequality (\ref{Lipschitz_auxiliary_y}) :
\begin{eqnarray*}
 \|\pi(0,x_{0}^{1} ; w^1) -\pi(0,x_{0}^{2} ; w^2)\|_{\infty ; T}
 & \leqslant &
 \|(F^{-1})'\|_{\infty;[0,F(1)]}\times\\
 & &
 [|F(x_{0}^{1}) - F(x_{0}^{2})| +
 2T^{\alpha}\gamma\theta^{\beta}\|w^1 - w^2\|_{\alpha ; T}]\\
 & \leqslant &
 C_T(x_{0}^{1},x_{0}^{2})
 (|x_{0}^{1} - x_{0}^{2}| +
 \|w^1 - w^2\|_{\alpha ; T}).
\end{eqnarray*}
So, $\pi(0,.)$ is locally Lipschitz continuous from
\begin{displaymath}
]0,1[\times C^{\alpha}
\left([0,T];\mathbb R\right)
\textrm{ into }
C^0\left([0,T];]0,1[\right).
\end{displaymath}
Consider $w\in C^{\alpha}(\mathbb R_+;\mathbb R)$ and a sequence $(w^n,n\in\mathbb N)$ of elements of $C^{\alpha}(\mathbb R_+;\mathbb R)$ such that :
\begin{displaymath}
\forall T > 0\textrm{, }
\lim_{n\rightarrow\infty}
\left\|w^n|_{[0,T]} - w|_{[0,T]}\right\|_{\alpha;T} = 0.
\end{displaymath}
For each $T > 0$ and every $x_0\in ]0,1[$,
\begin{eqnarray*}
 \lim_{n\rightarrow\infty}
 \left\|\pi(0,x_0;w^n)|_{[0,T]} -
 \pi(0,x_0;w)|_{[0,T]}\right\|_{\infty;T} & = &\\
 \lim_{n\rightarrow\infty}
 \left\|\pi(0,x_0;w^n|_{[0,T]}) -
 \pi(0,x_0;w|_{[0,T]})\right\|_{\infty;T} & = & 0
\end{eqnarray*}
because $\pi(0,.)$ is continuous from $]0,1[\times C^{\alpha}([0,T];\mathbb R)$ into $C^0([0,T];]0,1[)$.
\\
\\
That achieves the proof.
\end{proof}
\noindent
Let us now show that the continuous differentiability of the It\^o map established at \cite{FV10}, Theorem 11.3 extends to equation (\ref{stochastic_auxiliary_equation}) :
%

% Proposition : Continuous differentiability of the It map.

%
\begin{proposition}\label{continuous_differentiability}
Under assumptions \ref{assumption_beta}, \ref{signal_regularity_R_+} and \ref{auxiliary_assumption_cst}, the It\^o map $\pi(0,.)$ is continuously differentiable from
\begin{displaymath}
]0,1[\times C^{\alpha}
\left([0,T];\mathbb R\right)
\textrm{ into }
C^0\left([0,T];]0,1[\right)
\end{displaymath}
for each $T > 0$.
\end{proposition}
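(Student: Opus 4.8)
The plan is to carry the equation through the smooth change of variable $y = F(x_{\cdot})$ used throughout this subsection, so that (\ref{auxiliary_equation}) becomes the non-singular equation (\ref{auxiliary_equation_y}), and then to invoke the continuous differentiability of the It\^o map for regular rough differential equations (\cite{FV10}, Theorem 11.3). Since $F$ is a $C^{\infty}$ diffeomorphism from $]0,1[$ onto $]0,F(1)[$ with $F'(y) = [y(1-y)]^{-\beta} > 0$, its inverse $F^{-1}$ is $C^{\infty}$ on $]0,F(1)[$, and the drift $V := G\circ F^{-1}$ of (\ref{auxiliary_equation_y}) is a $C^{\infty}$ map on $]0,F(1)[$; in the constant-coefficient case the signal moreover enters additively, the equation reading $y_t = y_0 + \int_0^t V(y_s)\,ds + \gamma\theta^{\beta}w_t$. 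The sole obstruction to a direct application of \cite{FV10}, Theorem 11.3 is that $V$ and its derivatives explode as $y\to 0^+$ or $y\to F(1)^-$, so that $V$ fails to be globally $\mathrm{Lip}^{\gamma}$.

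Fix $T > 0$ and a point $(x_0,w)\in\ ]0,1[\times C^{\alpha}([0,T];\mathbb R)$, and set $y_0 := F(x_0)$. By Theorem \ref{existence_uniqueness_J} the solution $y = F(\pi(0,x_0;w))$ takes its values in a compact subinterval $[a,b]\subset\ ]0,F(1)[$ on $[0,T]$. Using the uniform Lipschitz estimate (\ref{Lipschitz_auxiliary_y}) of Proposition \ref{Ito_map_continuity}, I would exhibit a neighbourhood $\mathcal U$ of $(y_0,w)$ in $]0,F(1)[\times C^{\alpha}([0,T];\mathbb R)$ such that every solution of (\ref{auxiliary_equation_y}) issued from data in $\mathcal U$ still remains, on the whole of $[0,T]$, inside a single fixed compact subinterval $[a',b']\subset\ ]0,F(1)[$. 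This confinement step, which rests on the a priori avoidance of the singular boundary proved in the previous subsection, is what allows the singularity to be neutralised.

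Next I would choose a smooth cut-off $\chi$ equal to $1$ on $[a',b']$ and compactly supported in $]0,F(1)[$, and set $\tilde V := \chi V$, extended by $0$. Then $\tilde V$ is smooth with bounded derivatives of every order, hence lies in $\mathrm{Lip}^{\gamma}$ for every $\gamma$, in particular for the $\gamma > 1/\alpha$ required by \cite{FV10}, Theorem 11.3. The solution of (\ref{auxiliary_equation_y}) with $V$ replaced by $\tilde V$ coincides, for data in $\mathcal U$, with the genuine solution, since the latter never leaves $[a',b']$ where $\tilde V = V$. Applying \cite{FV10}, Theorem 11.3 to the modified (now $\mathrm{Lip}^{\gamma}$) equation yields that its It\^o map is continuously differentiable on $\mathcal U$; the coincidence of the two It\^o maps on $\mathcal U$ then transfers this continuous differentiability to the It\^o map of (\ref{auxiliary_equation_y}) at $(y_0,w)$.

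It remains to return to $\pi(0,\cdot)$ by composition: since $x_0\mapsto F(x_0)$ is $C^{\infty}$ from $]0,1[$ onto $]0,F(1)[$ and $y\mapsto F^{-1}(y)$ is $C^{\infty}$ on $]0,F(1)[$ (Proposition \ref{properties_F}.(2)), the chain rule shows that $\pi(0,x_0;w) = F^{-1}(y)$ is continuously differentiable in $(x_0,w)$ at the chosen point; as $(x_0,w)$ was arbitrary, the conclusion holds on all of $]0,1[\times C^{\alpha}([0,T];\mathbb R)$. The non-constant coefficient case follows verbatim once the additive forcing $w^{\vartheta}$ is used in place of $\gamma\theta^{\beta}w$, the continuity and linearity of the Young (resp. rough) integral $w\mapsto w^{\vartheta}$ (\cite{FV10}, Proposition 6.11) guaranteeing that it depends continuously differentiably on $w$. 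The main obstacle is precisely the confinement step: one must combine the boundary-avoidance from Theorem \ref{existence_uniqueness_J} with the global estimate (\ref{Lipschitz_auxiliary_y}) to trap a whole neighbourhood of data inside one compact subinterval, so that the truncation of $V$ is genuinely harmless; once this is done, the continuous differentiability reduces to the classical regular theory.
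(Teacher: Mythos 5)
Your proposal is correct, and it reaches the conclusion by a genuinely different (though related) route. The paper localises directly in the original variable: it uses the continuity statement of Proposition \ref{Ito_map_continuity} to trap $\pi(0,x_0;w)$ for all data near $(x_{0}^{0},w^0)$ inside a fixed compact $[m_0,1-m_0]\subset\ ]0,1[$, on which the multiplicative vector field $A\mapsto[A(1-A)]^{\beta}$ is $C^{\infty}$, and then applies Theorems 11.3 and 11.6 of \cite{FV10} to the original rough differential equation; the only extra work is to explain why the usual complementary-regularity condition $\alpha+\kappa>1$ on the direction $h$ of differentiation in the signal can be dropped, which the paper does via the explicit one-dimensional canonical lift (\ref{natural_geometric_rp}). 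You instead push the whole problem through the diffeomorphism $F$, confine the transformed solutions in a compact $[a',b']\subset\ ]0,F(1)[$ using the quantitative estimate (\ref{Lipschitz_auxiliary_y}), truncate the drift $G\circ F^{-1}$, and come back by composing with $F$ and $F^{-1}$. What your route buys is that the $y$-equation has \emph{additive} forcing, so differentiability in $w$ becomes an essentially classical ODE-with-parameter statement and the rough-path subtlety about the regularity of $h$ never arises; what it costs is the extra bookkeeping of the cut-off/coincidence argument and the (standard but unstated in detail) Fr\'echet differentiability of the composition operators $x_0\mapsto F(x_0)$ and $y\mapsto F^{-1}\circ y$ on $C^0$, which holds because $F^{-1}$ is $C^{\infty}$ on a neighbourhood of the compact $[a',b']$. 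Both arguments rest on the same two pillars --- boundary avoidance from Theorem \ref{existence_uniqueness_J} plus the regular differentiability theory on the compact region --- so I see no gap, only a slight looseness in citing Theorem 11.3 alone where joint differentiability in $(y_0,w)$ also needs Theorem 11.6 (or the elementary additive-noise argument you implicitly rely on).
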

%

% Proof.

%
\begin{proof}
For the sake of readability, the space $]0,1[\times C^{\alpha}([0,T];\mathbb R)$ is denoted by $E$.
\\
\\
Consider $(x_{0}^{0},w^0)\in E$, $x^0 := \pi(0,x_{0}^{0};w^0)$,
\begin{displaymath}
m_0\in
\left]0,
\left(\min_{t\in [0,T]}x_{t}^{0}\right)\wedge
\left(1 -\max_{t\in [0,T]}x_{t}^{0}\right)\right[
\end{displaymath}
and
\begin{displaymath}
\varepsilon_0 :=
\left(-m_0 + \min_{t\in [0,T]}x_{t}^{0}\right)\wedge
\left(1 - m_0 -\max_{t\in [0,T]}x_{t}^{0}\right).
\end{displaymath}
Since $\pi(0,.)$ is continuous from $E$ into $C^0([0,T];\mathbb R)$ by Proposition \ref{Ito_map_continuity} :
\begin{eqnarray}
 \forall\varepsilon\in ]0,\varepsilon_0]
 \textrm{, }
 \exists\eta > 0 & : &
 \forall (x_0,w)\in E,
 \nonumber\\
 \label{continuity_z}
 (x_0,w) & \in & B_E((x_{0}^{0},w^0);\eta)
 \Longrightarrow
 \|\pi(0,x_0;w) - x^0\|_{\infty;T} < \varepsilon\leqslant\varepsilon_0.
\end{eqnarray}
In particular, for every $(x_0,w)\in B_E((x_{0}^{0},w^0);\eta)$, the function $\pi(0,x_0;w)$ is $[m_0,1 - m_0]$-valued and $[m_0,1 - m_0]\subset ]0,1[$.
\\
\\
In \cite{FV10}, the continuous differentiability of the It\^o map with respect to the initial condition and the driving signal is established at theorems 11.3 and 11.6. In order to derive the It\^o map with respect to the driving signal at point $w^0$ in the direction $h\in C^{\kappa}([0,T];\mathbb R^d)$, $\kappa\in ]0,1[$ has to satisfy the condition $\alpha +\kappa > 1$ to ensure the existence of the geometric $1/\alpha$-rough path over $w^0 + \varepsilon h$ ($\varepsilon > 0$) provided at \cite{FV10}, Theorem 9.34 when $d > 1$. That condition can be dropped when $d = 1$, because the canonical geometric $1/\alpha$-rough path over $w^0 + \varepsilon h$ is
\begin{equation}\label{natural_geometric_rp}
t\in [0,T]\longmapsto
\left(1,w_{t}^{0} +\varepsilon h_t,\dots,
\frac{(w_{t}^{0} +\varepsilon h_t)^{[1/\alpha]}}{[1/\alpha]!}\right).
\end{equation}
Therefore, since the map $A\mapsto [A(1 - A)]^{\beta}$ is $C^{\infty}$ on $[m_0,M_0]$, $\pi(0,.)$ is continuously differentiable from $B_E((x_{0}^{0},w^0);\eta)$ into $C^0([0,T];\mathbb R)$.
\\
\\
In conclusion, since $(x_{0}^{0},w^0)$ has been arbitrarily chosen, $\pi(0,.)$ is continuously differentiable from $\mathbb R_{+}^{*}\times C^{\alpha}([0,T];\mathbb R)$ into $C^0([0,T];\mathbb R)$.
\end{proof}
\noindent
In the sequel, the solution of equation (\ref{auxiliary_equation_y}) with initial condition $y_0 := F(x_0)$ for $x_0\in ]0,1[$ is denoted by $y(y_0)$ or $y(y_0,w)$.
\\
\\
Let us conclude with the three following corollaries of Proposition \ref{continuous_differentiability}, using the particular form of the vector field of equation (\ref{auxiliary_equation}) :
%

% Corollary : Monotonicity with respect to the initial condition.

%
\begin{corollary}\label{monotonicity_initial_condition}
Under assumptions \ref{assumption_beta}, \ref{signal_regularity_R_+} and \ref{auxiliary_assumption_cst}, the map $\pi(0,.;w)_t$ is strictly increasing on $]0,1[$ for every $t\in\mathbb R_+$.
\end{corollary}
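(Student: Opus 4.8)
Since $F = F(1,\cdot)$ is strictly increasing on $]0,1[$ by Proposition \ref{properties_F}.(1), and the solutions of (\ref{auxiliary_equation}) and (\ref{auxiliary_equation_y}) are related by $x = F^{-1}(y)$, the plan is to prove the equivalent statement that $y_0\mapsto y(y_0,w)_t$ is strictly increasing, where $y(y_0,w)$ solves
\[
y_t = y_0 + \int_{0}^{t}(G\circ F^{-1})(y_s)\,ds + \gamma\theta^{\beta}w_t.
\]
Concretely, I would fix $0 < x_{0}^{1} < x_{0}^{2} < 1$, set $y_{0}^{i} := F(x_{0}^{i})$ (so that $y_{0}^{1} < y_{0}^{2}$) and $y^{i} := y(y_{0}^{i},w)$, and aim to show that $y_{t}^{1} < y_{t}^{2}$ for every $t\in\mathbb R_+$; applying the increasing map $F^{-1}$ then yields the conclusion for $\pi(0,\cdot;w)_t = F^{-1}(y_\cdot)$.

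The key structural feature is that the additive noise $\gamma\theta^{\beta}w$ is common to $y^{1}$ and $y^{2}$, so it cancels in the difference $d_t := y_{t}^{2} - y_{t}^{1}$, which therefore solves the genuinely noise-free equation
\[
d_t = d_0 + \int_{0}^{t}\bigl[(G\circ F^{-1})(y_{s}^{2}) - (G\circ F^{-1})(y_{s}^{1})\bigr]\,ds,
\qquad d_0 = y_{0}^{2} - y_{0}^{1} > 0.
\]
By Theorem \ref{existence_uniqueness_J} both $x^{i}$ are $]0,1[$-valued, so on any $[0,T]$ the functions $y^{1},y^{2}$ take values in a compact subinterval $[c,d]\subset\,]0,F(1)[$ on which $G\circ F^{-1}$ is continuously differentiable by Proposition \ref{properties_F}.(6). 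Rewriting the bracket via the fundamental theorem of calculus as $a_s d_s$, where
\[
a_s := \int_{0}^{1}(G\circ F^{-1})'\bigl(y_{s}^{1} + u(y_{s}^{2} - y_{s}^{1})\bigr)\,du,
\]
the difference $d$ solves the scalar linear equation $\dot d_t = a_t d_t$ with coefficient $a$ bounded (hence integrable) on $[0,T]$, because $(G\circ F^{-1})'$ is continuous on the compact set $[c,d]$. This gives the explicit representation $d_t = d_0\exp\!\left(\int_{0}^{t}a_s\,ds\right)$, which is strictly positive for all $t$ since $d_0 > 0$, and the claim follows.

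The main obstacle will be the legitimacy of the linear representation: it requires $(G\circ F^{-1})'$ to stay bounded along the two trajectories, whereas this derivative tends to $-\infty$ near the endpoints $0$ and $F(1)$ (Proposition \ref{properties_F}.(6)). The remedy is precisely the global confinement provided by Theorem \ref{existence_uniqueness_J}, which forces both solutions to remain in a compact subinterval of $]0,F(1)[$ on each compact time interval, so that $a$ is finite and the exponential formula is valid. I would also record here the by-product that $(G\circ F^{-1})' < -l$ yields $a_s < -l$, hence the contraction $d_t\leqslant d_0 e^{-lt}$, which is the natural input for the ergodicity results of the next subsection.
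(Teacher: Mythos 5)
Your proof is correct, but it takes a genuinely different route from the paper's. The paper invokes Proposition \ref{continuous_differentiability} to differentiate the flow of equation (\ref{auxiliary_equation_y}) with respect to the initial condition, writes $\partial_{y_0}y_t(y_0)$ as an explicit (manifestly positive) exponential expression, and concludes that $y_0\mapsto y_t(y_0)$ is strictly increasing; the final conjugation by $F$ and $F^{-1}$ is identical to yours. You instead compare two solutions directly: the additive noise cancels in the difference $d_t := y_t^2 - y_t^1$, which then solves the scalar linear equation $\dot d_t = a_t d_t$ with $a_t$ given by an integral of $(G\circ F^{-1})'$ along the segment joining $y_t^1$ to $y_t^2$, whence $d_t = d_0\exp\left(\int_0^t a_s\,ds\right) > 0$. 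Your version is more elementary, in that it uses only existence, uniqueness and the confinement of both trajectories in a compact subinterval of $]0,F(1)[$ on each $[0,T]$ (Theorem \ref{existence_uniqueness_J}), and not the continuous differentiability of the It\^o map; moreover, writing $a_s$ as $\int_0^1 (G\circ F^{-1})'\bigl(y_s^1+u(y_s^2-y_s^1)\bigr)\,du$ sidesteps any need to know beforehand that $y_t^1\not=y_t^2$ --- a point the paper's proof of Corollary \ref{Ito_map_Lipschitz} has to borrow from the present corollary before it can apply the mean value theorem. As you observe, Proposition \ref{properties_F}.(6) gives $a_s\leqslant -l$, so your computation also delivers the contraction estimate of Corollary \ref{Ito_map_Lipschitz} as a by-product. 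The one point you flag as delicate --- boundedness of $(G\circ F^{-1})'$ along the trajectories --- is indeed exactly what Theorem \ref{existence_uniqueness_J} supplies, so the argument closes.
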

%

% Proof.

%
\begin{proof}
By Proposition \ref{continuous_differentiability}, for every $t\in\mathbb R_{+}^{*}$ and every $y_0\in ]0,F(1)[$,
\begin{displaymath}
\partial_{y_0}
y_t(y_0) =
\int_{0}^{t}
\exp\left[
\int_{s}^{t}
(G\circ F^{-1})'\left[y_u(y_0)\right]du\right]ds > 0.
\end{displaymath}
Then, $y_0\in ]0,F(1)[\mapsto y_t(y_0)$ is strictly increasing on $]0,F(1)[$ for every $t\in\mathbb R_+$.
\\
\\
Since $F$ and $F^{-1}$ are respectively strictly increasing on $]0,1[$ and $]0,F(1)[$, the map $\pi(0,.;w)_t = F^{-1}[y_t[F(.)]]$ is strictly increasing on $]0,1[$ for every $t\in\mathbb R_+$.
\end{proof}
%

% Corollary : Extension by continuity of $y_0\mapsto y$.

%
\begin{corollary}\label{extension_by_continuity_y}
Under assumptions \ref{assumption_beta}, \ref{signal_regularity_R_+} and \ref{auxiliary_assumption_cst}, there exists two continuous functions $y(0)$ and $y[F(1)]$ (resp. $x(0)$ and $x(1)$) from $\mathbb R_+$ into $[0,F(1)]$ (resp. $[0,1]$) such that :
\begin{eqnarray*}
 & & \lim_{y_0\rightarrow 0}
 \left\|
 y(y_0) - y(0)
 \right\|_{\infty;T} = 0
 \textrm{ and }
 \lim_{y_0\rightarrow F(1)}
 \left\|
 y(y_0) - y[F(1)]
 \right\|_{\infty;T} = 0\\
 \textrm{(resp.} & &
 \lim_{x_0\rightarrow 0}
 \left\|
 \pi(0,x_0;w) - x(0)
 \right\|_{\infty;T} = 0
 \textrm{ and }
 \lim_{x_0\rightarrow 1}
 \left\|
 \pi(0,x_0;w) - x(1)
 \right\|_{\infty;T} = 0\textrm{)}
\end{eqnarray*}
for each $T > 0$. Moreover, $y_t(0)$ and $y_t[F(1)]$ (resp. $x(0)$ and $x(1)$) belong to $]0,F(1)[$ (resp. $]0,1[$) for every $t > 0$.
\end{corollary}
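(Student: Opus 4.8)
The plan is to run the whole argument in the transformed variable $y = F(x)$ governed by (\ref{auxiliary_equation_y}), writing $b := G\circ F^{-1}$ for its (strictly decreasing) drift, and to transfer everything back to $x$ at the end through $F^{-1}$.

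First I would produce the limits, together with the uniform convergence and the continuity. Taking $w^1 = w^2 = w$ in the Step 1 estimate (\ref{Lipschitz_auxiliary_y}) of Proposition \ref{Ito_map_continuity} gives $\|y(y_0^1) - y(y_0^2)\|_{\infty;T}\leqslant|y_0^1 - y_0^2|$ for every $T > 0$, so $y_0\mapsto y(y_0)$ is $1$-Lipschitz from $]0,F(1)[$ into the Banach space $(C^0([0,T];\mathbb R),\|.\|_{\infty;T})$. A Lipschitz map into a complete space extends continuously to the closure $[0,F(1)]$; the images of the endpoints are the announced functions $y(0)$ and $y[F(1)]$, the extension property is precisely the uniform convergence, and these two limits are continuous (uniform limits of continuous functions) and $[0,F(1)]$-valued. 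Since $F^{-1}$ is continuous, hence uniformly continuous on the compact $[0,F(1)]$, and $F(x_0)\to 0$ (resp. $F(1)$) as $x_0\to 0$ (resp. $1$), composing gives $x(0) := F^{-1}(y(0))$ and $x(1) := F^{-1}(y[F(1)])$ together with the corresponding uniform convergence of $\pi(0,x_0;w) = F^{-1}(y(F(x_0)))$.

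Next I would settle the interior property for $t > 0$. Two of the four bounds are immediate from the strict monotonicity of $y_0\mapsto y_t(y_0)$ (Corollary \ref{monotonicity_initial_condition}): for any fixed $y_0\in\,]0,F(1)[$ one has $y_t(0)\leqslant y_t(y_0) < F(1)$ and $y_t[F(1)]\geqslant y_t(y_0) > 0$. The two remaining bounds $y_t(0) > 0$ and $y_t[F(1)] < F(1)$ are the heart of the statement; the reflection $x\mapsto 1 - x$, which in the $y$-variable reads $y\mapsto F(1) - y$ and exchanges the two problems (with reflected coefficients $1-\mu$ and $-\gamma$, still admissible, as in the proof of Theorem \ref{existence_uniqueness_J}), reduces everything to proving $y_t(0) > 0$.

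The main obstacle is this last positivity, which I would obtain by adapting the blow-up argument of Theorem \ref{existence_uniqueness_J}. First, $y(0)$ solves (\ref{auxiliary_equation_y}) with $y_0 = 0$: as $b$ is decreasing and $y_s(y_0)\downarrow y_s(0)$, monotone convergence identifies $\lim_{y_0\to 0}\int_0^t b(y_s(y_0))ds$ with $\int_0^t b(y_s(0))ds$, which the equation forces to equal the finite number $y_t(0) -\gamma\theta^\beta w_t$. Arguing by contradiction, suppose $y_{t_0}(0) = 0$ for some $t_0 > 0$. On the one hand, writing the equation between $t$ and $t_0$ and using $y_t(0)\geqslant 0$ gives $\int_t^{t_0} b(y_s(0))ds\leqslant|\gamma|\theta^\beta\|w\|_{\alpha;T}(t_0 - t)^\alpha$. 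On the other hand, the H\"older bound $y_s(0) = y_s(0) - y_{t_0}(0)\leqslant\|y(0)\|_{\alpha;T}(t_0 - s)^\alpha$ and the equivalence $b(z)\sim\theta\mu(1-\beta)^{-\hat\beta}z^{-\hat\beta}$ near $0$ (Proposition \ref{properties_F}.(5)) give $\int_t^{t_0} b(y_s(0))ds\gtrsim\int_t^{t_0}(t_0 - s)^{-\alpha\hat\beta}ds$ for $t$ close to $t_0$. Under Assumption \ref{assumption_beta} one has $\alpha\hat\beta > 1-\alpha$, so either $\alpha\hat\beta\geqslant 1$ and this lower integral diverges, or $\alpha\hat\beta < 1$ and it is of order $(t_0 - t)^{1-\alpha\hat\beta}$ with $1-\alpha\hat\beta <\alpha$; dividing by $(t_0-t)^{1-\alpha\hat\beta}$ and letting $t\to t_0^-$ contradicts the $(t_0-t)^\alpha$ upper bound. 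Hence $y_t(0) > 0$ for $t > 0$, the reflection gives $y_t[F(1)] < F(1)$, and finally $x_t(0) = F^{-1}(y_t(0))$ and $x_t(1) = F^{-1}(y_t[F(1)])$ lie in $F^{-1}(]0,F(1)[) = \,]0,1[$ since $F^{-1}$ is a strictly increasing bijection from $[0,F(1)]$ onto $[0,1]$.
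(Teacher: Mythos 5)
Your proof is correct, and its first half (the $1$-Lipschitz estimate from Step 1 of Proposition \ref{Ito_map_continuity}, extension to the closure of $]0,F(1)[$ by completeness, transfer through $F^{-1}$) is exactly the paper's argument. For the strict interiority at $t>0$ you take a genuinely different, and arguably more direct, route. The paper first shows that $y(0)$ cannot vanish on a whole interval $[s,t]$ (the integral of the drift would diverge as $y_0\to 0$ while the left-hand side of the equation stays finite), extracts a sequence $t_0^n\downarrow 0$ with $y_{t_0^n}(0)\in\,]0,F(1)[$, and then restarts the equation at $t_0^n$ with the shifted signal so that Theorem \ref{existence_uniqueness_J} forbids any later return to the boundary. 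You instead pass to the limit in the integral equation once and for all (monotone convergence) and rerun the exponent-comparison argument of Theorem \ref{existence_uniqueness_J} directly on $y(0)$ at a putative zero $t_0>0$: the upper bound $C(t_0-t)^{\alpha}$ against a lower bound of order $(t_0-t)^{1-\alpha\hat\beta}$ (or divergence), incompatible since $\alpha\hat\beta>1-\alpha$. This avoids both the restarting step and the extraction of the sequence; the reflection and the use of Corollary \ref{monotonicity_initial_condition} for the two easy one-sided bounds are also clean. One small point to tighten: you invoke $\|y(0)\|_{\alpha;T}$, but $y(0)$ is only known as a uniform limit of the $y(y_0)$, whose $\alpha$-H\"older norms are not obviously bounded as $y_0\to 0$ (the drift blows up near $0$). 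What you actually need is only the one-sided estimate $y_s(0)=y_s(0)-y_{t_0}(0)\leqslant C(t_0-s)^{\alpha}$, and that does follow from the integral equation because $(G\circ F^{-1})[y_u(0)]\geqslant (G\circ F^{-1})\bigl[\max_{[0,T]}y(y_0^*)\bigr]>-\infty$ for any fixed $y_0^*\in\,]0,F(1)[$; the drift of $y(0)$ is thus bounded below on $[0,T]$, which yields the required upper bound on $y_s(0)-y_{t_0}(0)$ without any H\"older regularity of $y(0)$ itself.
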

%

% Proof.

%
\begin{proof}
Only the case $y_0\rightarrow 0$ (resp. $x_0\rightarrow 0$) is detailed. The case $y_0\rightarrow F(1)$ (resp. $x_0\rightarrow 1$) is obtained similarly.
\\
\\
On the one hand, as shown at Proposition \ref{Ito_map_continuity} ; for every $y_{0}^{1},y_{0}^{2}\in ]0,F(1)[$,
\begin{displaymath}
\|y(y_{0}^{1}) - y(y_{0}^{2})\|_{\infty}
\leqslant
|y_{0}^{1} - y_{0}^{2}|.
\end{displaymath}
So, $y_0\in ]0,F(1)[\mapsto y(y_0)$ is uniformly continuous from
\begin{displaymath}
(]0,F(1)[,|.|)
\textrm{ into }
(C^0(\mathbb R_+ ; [0,F(1)]),\|.\|_{\infty}),
\end{displaymath}
and since $C^0(\mathbb R_+ ; [0,F(1)])$ equipped with $\|.\|_{\infty}$ is a Banach space, $y_0\in ]0,F(1)[\mapsto y(y_0)$ has a unique continuous extension to $[0,F(1)]$.
\\
\\
On the other hand, for $y_0\in ]0,F(1)[$ and $t > s\geqslant 0$ arbitrarily chosen,
\begin{eqnarray*}
 y_t(y_0) - y_s(y_0)
 -\gamma\theta^{\beta}(w_t - w_s) & = &
 \int_{s}^{t}
 (G\circ F^{-1})\left[y_u(y_0)\right]du\\
 & \geqslant &
 (t - s)(G\circ F^{-1})
 \left[\sup_{u\in [s,t]}
 y_u(y_0)\right],
\end{eqnarray*}
because $G\circ F^{-1}$ is decreasing on $]0,F(1)[$ (cf. Proposition \ref{properties_F}.(6)). Since
\begin{displaymath}
\lim_{y_0\rightarrow 0}
\sup_{u\in [s,t]}
\left|y_u(y_0) - y_u(0)\right| = 0
\end{displaymath}
by construction, if $y_u(0) = 0$ for every $u\in [s,t]$ :
\begin{eqnarray*}
 \lim_{y_0\rightarrow 0}
 \int_{s}^{t}
 (G\circ F^{-1})\left[y_u(y_0)\right]du & = &
 (t - s)
 \lim_{y_0\rightarrow 0}
 (G\circ F^{-1})
 \left[\sup_{u\in [s,t]}y_u(y_0)\right]\\
 & = & \infty
\end{eqnarray*}
and
\begin{displaymath}
\lim_{y_0\rightarrow 0}
y_t(y_0) - y_s(y_0)
-\gamma\theta^{\beta}(w_t - w_s) =
-\gamma\theta^{\beta}(w_s - w_t) < \infty.
\end{displaymath}
Therefore, there exists $u\in [s,t]$ such that $y_u(0) > 0$.
\\
\\
Similarly, since
\begin{eqnarray*}
 y_t(y_0) - y_s(y_0)
 -\gamma\theta^{\beta}(w_t - w_s) & = &
 \int_{s}^{t}
 (G\circ F^{-1})\left[y_u(y_0)\right]du\\
 & \leqslant &
 (t - s)\times\\
 & &
 (G\circ F^{-1})
 \left[F(1) - \sup_{u\in [s,t]}\left[
 F(1) - y_u(y_0)\right]\right],
\end{eqnarray*}
there exists $u\in [s,t]$ such that $y_u(0) < F(1)$.
\\
\\
In particular, there exists a $\mathbb R_+$-valued sequence $(t_{0}^{n},n\in\mathbb N)$ such that $t_{0}^{n}\downarrow 0$ when $n\rightarrow\infty$, and
\begin{displaymath}
y_{t_{0}^{n}}(0)\in ]0,F(1)[
\textrm{ $;$ }
\forall n\in\mathbb N.
\end{displaymath}
Let $n\in\mathbb N$ be arbitrarily chosen. Since $y(0)$ is continuous on $\mathbb R_+$ by construction, $y_t(0)\in ]0,F(1)[$ for every $t\in [t_{0}^{n};\tau_{0,F(1)}(t_{0}^{n})[$ where,
\begin{displaymath}
\tau_{0,F(1)}(t_{0}^{n}) :=
\inf\left\{
t > t_{0}^{n} :
y_t(0) = 0
\textrm{ or }
y_t(0) = F(1)
\right\}.
\end{displaymath}
For $\varepsilon\in ]0,F(1)[$ arbitrarily chosen, by Corollary \ref{monotonicity_initial_condition} together with the continuity of $y(y_0)$ on $\mathbb R_+$ for every $y_0\in [0,\varepsilon]$ ; for every $t\in [t_{0}^{n};\tau_{0,F(1)}(t_{0}^{n})[$, there exists $t_{\min}^{n},t_{\max}^{n}\in [t_{0}^{n},t]$ such that for every $y_0\in [0,\varepsilon]$ and every $s\in [t_{0}^{n},t]$,
\begin{displaymath}
0 < y_{t_{\min}^{n}}(0)
\leqslant y_s(y_0)
\leqslant y_{t_{\max}^{n}}(\varepsilon) < F(1)
\end{displaymath}
and, by Proposition \ref{properties_F}.(6),
\begin{displaymath}
(G\circ F^{-1})[y_{t_{\max}^{n}}(\varepsilon)]
\leqslant
(G\circ F^{-1})[y_s(y_0)]
\leqslant
(G\circ F^{-1})[y_{t_{\min}^{n}}(0)].
\end{displaymath}
Then, by Lebesgue's theorem :
\begin{eqnarray}
 y_{t + t_{0}^{n}}(0) & = &
 y_{t_{0}^{n}}(0) +
 \lim_{y_0\rightarrow 0}
 \int_{t_{0}^{n}}^{t_{0}^{n} + t}
 (G\circ F^{-1})\left[y_s(y_0)\right]ds +
 \gamma\theta^{\beta}
 (w_{t_{0}^{n} + t} - w_{t_{0}^{n}})
 \nonumber\\
 \label{translated_equation_y}
 & = &
 y_{t_{0}^{n}}(0) +
 \int_{0}^{t}
 (G\circ F^{-1})\left[y_{s + t_{0}^{n}}(0)\right]ds +
 \gamma\theta^{\beta}
 (\theta_{t_{0}^{n}}w)_t
\end{eqnarray}
for every $t\in [0;\tau_{0,F(1)}(t_{0}^{n}) - t_{0}^{n}[$ where, $\theta_{t_{0}^{n}}w = w_{t_{0}^{n} + .} - w_{t_{0}^{n}}$. By (\ref{translated_equation_y}) and Theorem \ref{existence_uniqueness_J} :
\begin{eqnarray*}
 \tau_{0,F(1)}(t_{0}^{n}) & = &
 \inf\{
 t > 0 :
 y_t[
 y_{t_{0}^{n}}(0),\theta_{t_{0}^{n}}w] = 0
 \textrm{ or }
 y_t[
 y_{t_{0}^{n}}(0),\theta_{t_{0}^{n}}w] = F(1)\}\\
 & = & \infty.
\end{eqnarray*}
Therefore, $y(0)$ is a $]0,F(1)[$-valued function on $[t_{0}^{n},\infty[$ for every $n\in\mathbb N$. Since $t_{0}^{n}\downarrow 0$ when $n\rightarrow\infty$, $y(0)$ is a $]0,F(1)[$-valued function on $\mathbb R_{+}^{*}$.
\\
\\
By putting $x(0) := F^{-1}[y_.(0)]$, since $\pi(0,x_0;w) = F^{-1}[y_.(y_0)]$ and $F^{-1}$ is continuously differentiable from $[0,F(1)]$ into $[0,1]$, that achieves the proof.
\end{proof}
\noindent
In the sequel, for every $x_0\in [0,1]$,
\begin{displaymath}
x_t(x_0) :=
\left\{
\begin{array}{rcl}
 \lim_{\varepsilon\rightarrow 0}
 \pi\left(0,\varepsilon;w\right)_t
 & \textrm{if} &
 x_0 = 0\\
 \pi\left(0,x_0;w\right)_t
 & \textrm{if} &
 x_0\in ]0,1[\\
 \lim_{\varepsilon\rightarrow 0}
 \pi\left(0,1 -\varepsilon;w\right)_t
 & \textrm{if} &
 x_0 = 1
\end{array}
\right.
\textrm{$;$ }
\forall t\in\mathbb R_+
\end{displaymath}
and for every $y_0\in [0,F(1)]$,
\begin{displaymath}
y_t(y_0) :=
\left\{
\begin{array}{rcl}
 \lim_{\varepsilon\rightarrow 0}
 y_t(\varepsilon)
 & \textrm{if} &
 y_0 = 0\\
 y_t(y_0)
 & \textrm{if} &
 y_0\in ]0,F(1)[\\
 \lim_{\varepsilon\rightarrow 0}
 y_t[F(1) -\varepsilon]
 & \textrm{if} &
 y_0 = F(1)
\end{array}
\right.
\textrm{$;$ }
\forall t\in\mathbb R_+.
\end{displaymath}
%

% Corollary : Global Lipschitz continuity of the Itô map.

%
\begin{corollary}\label{Ito_map_Lipschitz}
Under assumptions \ref{assumption_beta}, \ref{signal_regularity_R_+} and \ref{auxiliary_assumption_cst}, there exists two constants $C > 0$ and $l > 0$, only depending on $F$ and $G$, such that :
\begin{displaymath}
|y_t(y_{0}^{1}) - y_t(y_{0}^{2})|
\leqslant
|y_{0}^{1} - y_{0}^{2}|e^{-lt}
\textrm{ $;$ }
\forall t\in\mathbb R_+
\end{displaymath}
for every $y_{0}^{1},y_{0}^{2}\in [0,F(1)]$, and
\begin{displaymath}
|x_t(x_{0}^{1}) -
x_t(x_{0}^{2})|
\leqslant
C|F(x_{0}^{1}) -
F(x_{0}^{2})|e^{-lt}
\textrm{ $;$ }
\forall t\in\mathbb R_+
\end{displaymath}
for every $x_{0}^{1},x_{0}^{2}\in [0,1]$.
\end{corollary}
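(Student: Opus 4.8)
The plan is to establish the sharp contraction first for the auxiliary process $y$, whose drift $G\circ F^{-1}$ is strictly decreasing, and then to transfer the estimate to $x$ through the smooth change of variables $x = F^{-1}(y)$. The crucial observation is that when two solutions of equation (\ref{auxiliary_equation_y}) are driven by the \emph{same} signal $w$ but started from $y_{0}^{1}$ and $y_{0}^{2}$, the additive noise term $\gamma\theta^{\beta}w_t$ cancels in the difference, leaving a purely deterministic relation to which Proposition \ref{properties_F}.(6) applies directly.

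First I would fix $y_{0}^{1},y_{0}^{2}\in ]0,F(1)[$ and set $\delta_t := y_t(y_{0}^{1}) - y_t(y_{0}^{2})$. Subtracting the two integral equations gives
\begin{displaymath}
\delta_t = \delta_0 + \int_{0}^{t}\bigl[(G\circ F^{-1})(y_s(y_{0}^{1})) - (G\circ F^{-1})(y_s(y_{0}^{2}))\bigr]ds,
\end{displaymath}
with no noise contribution; in particular $t\mapsto\delta_t$ is continuously differentiable. By Corollary \ref{monotonicity_initial_condition} the sign of $\delta_t$ does not change with $t$, so assuming $y_{0}^{1}\geqslant y_{0}^{2}$ (hence $\delta_t\geqslant 0$) and differentiating, the mean value theorem provides $\xi_t$ strictly between $y_t(y_{0}^{2})$ and $y_t(y_{0}^{1})$ with $\dot\delta_t = (G\circ F^{-1})'(\xi_t)\,\delta_t$. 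Proposition \ref{properties_F}.(6) yields $(G\circ F^{-1})'(\xi_t) < -l$, whence $\dot\delta_t\leqslant -l\delta_t$; integrating the resulting inequality $\frac{d}{dt}(e^{lt}\delta_t)\leqslant 0$ gives $|\delta_t|\leqslant|y_{0}^{1} - y_{0}^{2}|e^{-lt}$. The case $y_{0}^{1}\leqslant y_{0}^{2}$ is symmetric.

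To reach the boundary values $y_0\in\{0,F(1)\}$, I would pass to the limit using Corollary \ref{extension_by_continuity_y}: the extended solutions $y(0)$ and $y[F(1)]$ are uniform limits on compacts of $y(y_0)$ as $y_0\rightarrow 0$ or $y_0\rightarrow F(1)$, so the inequality, being stable under uniform convergence, persists on all of $[0,F(1)]$. Finally, since $x_t(x_0) = F^{-1}(y_t(F(x_0)))$ and $(F^{-1})'$ is bounded on $[0,F(1)]$ by Proposition \ref{properties_F}.(2), setting $C := \|(F^{-1})'\|_{\infty;[0,F(1)]}$ and applying the $y$-estimate with $y_{0}^{i} = F(x_{0}^{i})$ yields
\begin{displaymath}
|x_t(x_{0}^{1}) - x_t(x_{0}^{2})|\leqslant C|y_t(F(x_{0}^{1})) - y_t(F(x_{0}^{2}))|\leqslant C|F(x_{0}^{1}) - F(x_{0}^{2})|e^{-lt}.
\end{displaymath}
Both constants depend only on $F$ and $G$, as required.

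The only delicate point, rather than a genuine obstacle, is the boundary analysis: one must verify that for $t > 0$ the extended trajectories remain in the open interval $]0,F(1)[$ so that $(G\circ F^{-1})'$ is defined along them — this is precisely the content of Corollary \ref{extension_by_continuity_y} — and that the differential argument survives the limit $y_0\rightarrow 0$, which follows from the uniform convergence on compact intervals. Everything else is a direct consequence of the strict monotonicity of the drift established earlier.
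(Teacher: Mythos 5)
Your proposal is correct and follows essentially the same route as the paper: a differential inequality for the difference of the two $y$-solutions obtained from the mean value theorem and the uniform bound $(G\circ F^{-1})'\leqslant -l$ of Proposition \ref{properties_F}.(6), then transfer to $x$ via the Lipschitz constant of $F^{-1}$ and a passage to the limit for the boundary initial conditions. The only cosmetic difference is that the paper differentiates $(y_t^1-y_t^2)^2$ to sidestep the sign discussion, whereas you invoke Corollary \ref{monotonicity_initial_condition} to fix the sign of the difference; both are valid.
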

%

% Proof.

%
\begin{proof}
For $i = 1,2$, consider the solution $x^i$ of equation (\ref{auxiliary_equation}) with initial condition $x_{0}^{i}\in ]0,1[$, and $y^i := F(x_{.}^{i})$.
\\
Moreover, assume that $x_{0}^{1}\not= x_{0}^{2}$. By Corollary \ref{monotonicity_initial_condition}, $y_{t}^{1}\not= y_{t}^{2}$ for every $t\in\mathbb R_+$.
\\
\\
For every $t\in\mathbb R_+$,
\begin{displaymath}
\frac{d}{dt}(y_{t}^{1} - y_{t}^{2}) =
(G\circ F^{-1})(y_{t}^{1}) - (G\circ F^{-1})(y_{t}^{2}).
\end{displaymath}
Then,
\begin{eqnarray*}
 \frac{d}{dt}(y_{t}^{1} - y_{t}^{2})^2 & = &
 2(y_{t}^{1} - y_{t}^{2})
 \left[
 (G\circ F^{-1})(y_{t}^{1}) - (G\circ F^{-1})(y_{t}^{2})\right]\\
 & = &
 2(y_{t}^{1} - y_{t}^{2})^2
 \frac{(G\circ F^{-1})(y_{t}^{1}) - (G\circ F^{-1})(y_{t}^{2})}{y_{t}^{1} - y_{t}^{2}}.
\end{eqnarray*}
By Proposition \ref{properties_F}.(6), there exists a constant $l > 0$ such that :
\begin{displaymath}
\forall z\in ]0,F(1)[
\textrm{, }
(G\circ F^{-1})'(z)\leqslant -l.
\end{displaymath}
So, by the mean value theorem, there exists $c_t\in ]y_{t}^{1}\wedge y_{t}^{2}, y_{t}^{1}\vee y_{t}^{2}[\subset ]0,F(1)[$ such that :
\begin{displaymath}
\frac{(G\circ F^{-1})(y_{t}^{1}) - (G\circ F^{-1})(y_{t}^{2})}{y_{t}^{1} - y_{t}^{2}} =
(G\circ F^{-1})'(c_t)
\leqslant -l.
\end{displaymath}
Therefore,
\begin{displaymath}
\frac{d}{dt}\left(y_{t}^{1} - y_{t}^{2}\right)^2
\leqslant
-2l\left(y_{t}^{1} - y_{t}^{2}\right)^2.
\end{displaymath}
By integrating that inequality :
\begin{displaymath}
\left|y_{t}^{1} - y_{t}^{2}\right|
\leqslant
\left|y_{0}^{1} - y_{0}^{2}\right|e^{-lt}.
\end{displaymath}
Since $x^i = F^{-1}(y_{.}^{i})$ and $F^{-1}$ is continuously differentiable from $[0,F(1)]$ \mbox{into $[0,1]$ :}
\begin{displaymath}
\left|x_t\left(x_{0}^{1}\right) -
x_t\left(x_{0}^{2}\right)\right|
\leqslant
C\left|F\left(x_{0}^{1}\right) -
F\left(x_{0}^{2}\right)\right|e^{-lt}
\end{displaymath}
where, $C > 0$ denotes the Lipschitz constant of $F^{-1}$.
\\
\\
That inequality holds true when $x_{0}^{1}$ or $x_{0}^{2}$ goes to $0$ or $1$, because $F$ is continuous on $[0,1]$.
\end{proof}
%

% Subsection : Approximation scheme.

%
\subsection{Approximation scheme}
In order to provide a converging approximation scheme for equation (\ref{auxiliary_equation}), the convergence of the implicit Euler scheme for equation (\ref{auxiliary_equation_y}) is studied first under assumptions \ref{assumption_beta}, \ref{signal_regularity_R_+} and \ref{auxiliary_assumption_cst}.
\\
\\
Consider the recurrence equation
\begin{equation}\label{euler_scheme_y}
\left\{
\begin{array}{rcl}
y_{0}^{n} & = & y_0\in ]0,F(1)[\\
y_{k + 1}^{n} & = &
\displaystyle{y_{k}^{n} +\frac{T}{n}(G\circ F^{-1})(y_{k + 1}^{n}) +
\gamma\theta^{\beta}(w_{t_{k + 1}^{n}} - w_{t_{k}^{n}}})
\end{array}
\right.
\end{equation}
where, for $n\in\mathbb N^*$ and $T > 0$, $t_{k}^{n} := kT/n$ and $k\leqslant n$ while $y_{k + 1}^{n}\in ]0,F(1)[$.
\\
\\
The following proposition shows that the step-$n$ implicit Euler approximation $y^n$ is defined on $\{0,\dots,n\}$ :
%

% Proposition : Existence of the Euler approximation.

%
\begin{proposition}\label{existence_euler_scheme_y}
Under assumptions \ref{assumption_beta}, \ref{signal_regularity_R_+} and \ref{auxiliary_assumption_cst}, equation (\ref{euler_scheme_y}) admits a unique solution $(y^n,n\in\mathbb N^*)$. Moreover,
\begin{displaymath}
\forall n\in\mathbb N^*\textrm{$,$ }
\forall k = 0,\dots, n\textrm{$,$ }
y_{k}^{n}\in ]0,F(1)[.
\end{displaymath}
\end{proposition}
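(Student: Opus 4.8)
The plan is to recognize that, at each step, the implicit scheme (\ref{euler_scheme_y}) amounts to solving a single scalar equation, and to reduce the whole statement to the fact that an auxiliary map is a bijection from $]0,F(1)[$ onto $\mathbb R$. Fix $n\in\mathbb N^*$ and suppose inductively that $y_{k}^{n}\in ]0,F(1)[$ has already been constructed. Setting
\begin{displaymath}
H_n(z) :=
z -\frac{T}{n}(G\circ F^{-1})(z)
\textrm{ and }
b_{k}^{n} :=
y_{k}^{n} +\gamma\theta^{\beta}(w_{t_{k+1}^{n}} - w_{t_{k}^{n}}),
\end{displaymath}
the recurrence (\ref{euler_scheme_y}) reads $H_n(y_{k+1}^{n}) = b_{k}^{n}$. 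Thus the existence and uniqueness of $y_{k+1}^{n}$ in $]0,F(1)[$, together with the membership $y_{k+1}^{n}\in ]0,F(1)[$, are all equivalent to the single assertion that $H_n$ is a bijection from $]0,F(1)[$ onto $\mathbb R$. The proposition then follows by induction on $k$, starting from $y_{0}^{n} = y_0\in ]0,F(1)[$.

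For the injectivity I would differentiate: since $G\circ F^{-1}$ is differentiable on $]0,F(1)[$, so is $H_n$, and by Proposition \ref{properties_F}.(6) there is $l > 0$ with $(G\circ F^{-1})'(z) < -l$ for every $z\in ]0,F(1)[$, whence
\begin{displaymath}
H_n'(z) =
1 -\frac{T}{n}(G\circ F^{-1})'(z)
> 1 +\frac{T}{n}\,l
> 0.
\end{displaymath}
Therefore $H_n$ is strictly increasing on $]0,F(1)[$, which already yields uniqueness of the solution of $H_n(z) = b_{k}^{n}$.

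The surjectivity of $H_n$ onto all of $\mathbb R$ is the main obstacle, and it is where the structure of the vector field is genuinely used; note that the uniform derivative bound above forces only an \emph{at least linear} decay of $G\circ F^{-1}$ and is not by itself enough to reach $\pm\infty$ at the endpoints. Instead I would use the explicit form $G(x) =\theta(\mu - x)[x(1-x)]^{-\beta}$, so that $G(x)\sim\theta\mu\,x^{-\beta}\to +\infty$ as $x\to 0^+$ and $G(x)\sim -\theta(1-\mu)(1-x)^{-\beta}\to -\infty$ as $x\to 1^-$. Transporting these limits through the increasing homeomorphism $F^{-1} : [0,F(1)]\to [0,1]$ gives $(G\circ F^{-1})(z)\to +\infty$ as $z\to 0^+$ and $(G\circ F^{-1})(z)\to -\infty$ as $z\to F(1)^-$, hence $H_n(z)\to -\infty$ as $z\to 0^+$ and $H_n(z)\to +\infty$ as $z\to F(1)^-$.

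Combining the two preceding points, $H_n$ is a continuous strictly increasing bijection from $]0,F(1)[$ onto $\mathbb R$, so by the intermediate value theorem the equation $H_n(z) = b_{k}^{n}$ admits, for the arbitrary real value $b_{k}^{n}$, exactly one root $y_{k+1}^{n}\in ]0,F(1)[$. Since $H_n$ depends on $n$ but not on $k$ and the argument is uniform in the step, the induction closes and produces, for every $n\in\mathbb N^*$, a unique sequence $(y_{k}^{n})_{0\leqslant k\leqslant n}$ with $y_{k}^{n}\in ]0,F(1)[$ for all $k = 0,\dots,n$, as claimed.
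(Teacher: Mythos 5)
Your proof is correct and follows essentially the same route as the paper: the paper's auxiliary map $\varphi(y,A,B) = y - B(G\circ F^{-1})(y) - A$ is exactly your $H_n(y) - b_k^n$ with $B = T/n$, and both arguments rest on strict monotonicity via Proposition \ref{properties_F}.(6) plus the divergence of $G\circ F^{-1}$ at the endpoints, followed by induction on $k$. Your write-up is in fact slightly more detailed on the surjectivity step, where the paper merely asserts the boundary limits that you derive from the explicit form of $G$.
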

%

% Proof.

%
\begin{proof}
Let $\varphi$ be the function defined on $]0,F(1)[\times\mathbb R\times\mathbb R_{+}^{*}$ by :
\begin{displaymath}
\varphi\left(y,A,B\right) :=
y - B(G\circ F^{-1})(y) - A.
\end{displaymath}
On the one hand, for every $A\in\mathbb R$ and $B > 0$, $\varphi(.,A,B)\in C^{\infty}(]0,F(1)[;\mathbb R)$ and by Proposition \ref{properties_F}.(6), for every $y\in ]0,F(1)[$,
\begin{eqnarray*}
 \partial_y
 \varphi\left(y,A,B\right) & = & 1 - B(G\circ F^{-1})'(y)\\
 & = &
 1 - B\frac{G'[F^{-1}(y)]}{F'[F^{-1}(y)]} > 0.
\end{eqnarray*}
Then, $\varphi(.,A,B)$ is increasing on $]0,F(1)[$. Moreover,
\begin{displaymath}
\lim_{y\rightarrow 0^+}
\varphi\left(y,A,B\right) = -\infty
\textrm{ and }
\lim_{y\rightarrow F(1)^-}
\varphi\left(y,A,B\right) = \infty.
\end{displaymath}
Therefore, since $\varphi$ is continuous on $]0,F(1)[\times\mathbb R\times\mathbb R_{+}^{*}$ :
\begin{displaymath}
\forall A\in\mathbb R\textrm{, }
\forall B > 0\textrm{, }
\exists ! y\in ]0,F(1)[ :
\varphi\left(y,A,B\right) = 0.
\end{displaymath}
On the other hand, for every $n\in\mathbb N^*$, equation (\ref{euler_scheme_y}) can be rewritten as follow :
\begin{equation}\label{euler_scheme_y_rewritten}
\varphi\left[y_{k + 1}^{n},
y_{k}^{n} +
\gamma\theta^{\beta}(
w_{t_{k + 1}^{n}} -
w_{t_{k}^{n}}),
\frac{T}{n}\right] = 0
\textrm{ ; }
k\in\{0,\dots,n\}.
\end{equation}
In conclusion, by recurrence, equation (\ref{euler_scheme_y_rewritten}) admits a unique solution $y_{k + 1}^{n}\in ]0,F(1)[$.
\\
\\
Necessarily, $y_{k}^{n}\in ]0,F(1)[$ for $k = 0,\dots, n$. That achieves the proof.
\end{proof}
\noindent
For each $n\in\mathbb N^*$, consider the function $y^n : [0,T]\rightarrow ]0,F(1)[$ such that
\begin{displaymath}
y_{t}^{n} :=
\sum_{k = 0}^{n - 1}
\left[
y_{k}^{n} +
\frac{y_{k + 1}^{n} - y_{k}^{n}}{t_{k + 1}^{n} - t_{k}^{n}}
(t - t_{k}^{n})\right]
\mathbf 1_{[t_{k}^{n},t_{k + 1}^{n}[}(t)
\end{displaymath}
for every $t\in [0,T]$.
\\
\\
With the ideas of A. Lejay \cite{LEJAY10}, Proposition 5, let us prove that $(y^n,n\in\mathbb N^*)$ converges to the solution of equation (\ref{auxiliary_equation_y}) with initial condition $y_0\in ]0,F(1)[$.
%

% Theorem : Limit Euler approximation.

%
\begin{theorem}\label{euler_convergence}
Under assumptions \ref{assumption_beta}, \ref{signal_regularity_R_+} and \ref{auxiliary_assumption_cst}, $(y^n,n\in\mathbb N^*)$ is uniformly converging with rate $n^{-\alpha}$ to the solution $y$ of equation (\ref{auxiliary_equation_y}), with initial condition $y_0$, up to the time $T$.
\end{theorem}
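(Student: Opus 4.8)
The plan is to prove uniform convergence of the implicit Euler scheme $y^n$ to the solution $y$ of equation (\ref{auxiliary_equation_y}) with rate $n^{-\alpha}$, by controlling the error $e_t^n := y_t^n - y_t$ via a Gronwall-type argument that exploits the one-sided Lipschitz (dissipativity) property of $G\circ F^{-1}$ established in Proposition \ref{properties_F}.(6). The key structural fact is that $(G\circ F^{-1})'(z) < -l < 0$ on $]0,F(1)[$; this monotonicity is exactly what makes the implicit scheme stable without any global Lipschitz bound on the (singular) drift.

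First I would set up the error at the grid points. Evaluating the exact equation between $t_k^n$ and $t_{k+1}^n$, and subtracting the implicit recurrence (\ref{euler_scheme_y}), the additive-noise increments $\gamma\theta^{\beta}(w_{t_{k+1}^n}-w_{t_k^n})$ cancel exactly, so the discrete error satisfies
\begin{displaymath}
e_{k+1}^n = e_k^n + \frac{T}{n}\bigl[(G\circ F^{-1})(y_{k+1}^n) - (G\circ F^{-1})(y_{k+1})\bigr] + R_k^n,
\end{displaymath}
where $R_k^n$ is the local consistency error coming from replacing the time integral $\int_{t_k^n}^{t_{k+1}^n}(G\circ F^{-1})(y_s)\,ds$ by the right-endpoint rectangle rule. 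Writing the bracketed difference as $(G\circ F^{-1})'(c_k)\,(y_{k+1}^n-y_{k+1})$ via the mean value theorem, with $c_k\in\,]0,F(1)[$, and using $(G\circ F^{-1})'(c_k)\le -l$, I would absorb the implicit term onto the left-hand side: this yields
\begin{displaymath}
\Bigl(1+\tfrac{lT}{n}\Bigr)|e_{k+1}^n| \le |e_k^n| + |R_k^n|,
\end{displaymath}
so that $|e_{k+1}^n| \le (1+lT/n)^{-1}|e_k^n| + |R_k^n|$. The contraction factor $(1+lT/n)^{-1}<1$ is the payoff of dissipativity; iterating gives a geometric damping of accumulated errors rather than the exponential blow-up one gets from a naive Lipschitz estimate, and summing over $k$ bounds $\max_k|e_k^n|$ by a constant times $\max_k|R_k^n|\cdot n$.

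Second I would estimate the consistency term $R_k^n$. Since $w$ is $\alpha$-H\"older (Assumption \ref{signal_regularity_R_+}) and $y$ solves (\ref{auxiliary_equation_y}), the solution $y$ is itself $\alpha$-H\"older on $[0,T]$ with $\|y\|_{\alpha;T}$ controlled by $\|w\|_{\alpha;T}$ and the (bounded on $[0,F(1)]$) drift; consequently $(G\circ F^{-1})(y_\cdot)$ is $\alpha$-H\"older on $[0,T]$, being the composition of the Lipschitz map $G\circ F^{-1}$ (bounded derivative on any $[\delta,F(1)-\delta]$, and the trajectory stays in such a compact strip by Corollary \ref{extension_by_continuity_y}) with an $\alpha$-H\"older path. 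The rectangle-rule error on an interval of length $T/n$ is therefore $O((T/n)^{1+\alpha})$, giving $|R_k^n|\le K n^{-(1+\alpha)}$ uniformly in $k$. Summing the $n$ terms produces the global bound $\max_k|e_k^n|\le K' n^{-\alpha}$.

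Finally I would pass from the grid-point error to the uniform error of the piecewise-linear interpolant $y^n$. Between grid points, $y^n$ is affine and $y$ is $\alpha$-H\"older, so $\|y^n-y\|_{\infty;T}$ is bounded by $\max_k|e_k^n|$ plus the oscillation of each over one sub-interval, both of which are $O(n^{-\alpha})$, yielding the claimed rate. \textbf{The main obstacle} I anticipate is justifying the uniform (in $n$ and $k$) H\"older control of $(G\circ F^{-1})(y_\cdot)$: the drift $G\circ F^{-1}$ has unbounded derivative near the endpoints $0$ and $F(1)$, so the consistency estimate hinges on the a priori fact that both the exact trajectory $y$ and the discrete iterates $y_k^n$ remain in a fixed compact subinterval $[\delta,F(1)-\delta]$ away from the boundary. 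For $y$ this is Corollary \ref{extension_by_continuity_y}, and for the scheme one needs a uniform lower/upper bound on $y_k^n$; I would obtain this by combining Proposition \ref{existence_euler_scheme_y} (which keeps each iterate strictly inside $]0,F(1)[$) with a discrete comparison argument using the monotonicity of $\varphi(\cdot,A,B)$, showing the iterates cannot approach the endpoints faster than a rate controlled independently of $n$.
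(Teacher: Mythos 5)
Your proposal is correct and follows the same overall strategy as the paper: an error recursion at the grid points that is stable because of the monotonicity of $G\circ F^{-1}$ (Proposition \ref{properties_F}.(6)), a consistency error of order $n^{-(1+\alpha)}$ coming from the $\alpha$-H\"older regularity of $y$ and the Lipschitz continuity of $G\circ F^{-1}$ on the compact range of $y$, and a final interpolation estimate. Two remarks on where you diverge. First, you use the mean value theorem with the quantitative bound $(G\circ F^{-1})'\leqslant -l$ to extract a genuine contraction factor $(1+lT/n)^{-1}$; the paper only uses the sign of $(G\circ F^{-1})(y_{i+1}^n)-(G\circ F^{-1})(z_{i+1}^n)$ to discard that term, obtaining $|e_{i+1}^n|\leqslant |e_i^n|+|\varepsilon_i^n|$. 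Both give the rate $n^{-\alpha}$; your variant yields a constant that does not grow with $T$, which is a mild bonus but not needed here. Second, the ``main obstacle'' you single out largely dissolves: the consistency error involves only the exact solution $y$, which stays in a fixed compact $[y_*,y^*]\subset\,]0,F(1)[$ simply because it is a continuous $]0,F(1)[$-valued function on $[0,T]$ (Theorem \ref{existence_uniqueness_J}) --- Corollary \ref{extension_by_continuity_y} is not the relevant reference --- and the containment of the discrete iterates $y_k^n$ in a fixed compact, which you only need for the interpolation step, follows \emph{a posteriori} from the grid bound $\max_k|e_k^n|=O(n^{-\alpha})$ that your first two steps already establish without it (for $n$ large, $y_k^n\in[y_*-y_*/2,\,M_y]$). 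This is exactly how the paper proceeds, so the separate ``discrete comparison argument'' you sketch is unnecessary and would only complicate the proof.
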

%

% Proof.

%
\begin{proof}
The proof follows the same pattern as in \cite{LEJAY10}, Proposition 5.
\\
\\
Consider $n\in\mathbb N^*$, $t\in [0,T]$ and $y$ the solution of equation (\ref{auxiliary_equation_y}) with initial condition $y_0\in ]0,F(1)[$. Since $(t_{k}^{n};k = 0,\dots,n)$ is a subdivision of $[0,T]$, there exists an integer $k\in\{0,\dots,n - 1\}$ such that $t\in [t_{k}^{n},t_{k + 1}^{n}[$.
\\
\\
First of all, note that
\begin{equation}\label{euler_majoration_1}
|y_{t}^{n} - y_t|\leqslant
|y_{t}^{n} - y_{k}^{n}| +
|y_{k}^{n} - z_{k}^{n}| + 
|z_{k}^{n} - y_t|
\end{equation}
where, $z_{i}^{n} := y_{t_{i}^{n}}$ for $i = 0,\dots,n$. Since $y$ is the solution of equation (\ref{auxiliary_equation_y}), $z_{k}^{n}$ and $z_{k + 1}^{n}$ satisfy
\begin{displaymath}
z_{k + 1}^{n} =
z_{k}^{n} +
\frac{T}{n}(G\circ F^{-1})(z_{k + 1}^{n}) +
\gamma\theta^{\beta}
(w_{t_{k + 1}^{n}} -
w_{t_{k}^{n}}) +
\varepsilon_{k}^{n}
\end{displaymath}
where,
\begin{displaymath}
\varepsilon_{k}^{n} :=
\int_{t_{k}^{n}}^{t_{k + 1}^{n}}
[(G\circ F^{-1})(y_s) -
(G\circ F^{-1})(y_{t_{k + 1}^{n}})]ds.
\end{displaymath}
In order to conclude, let us show that $|y_{k}^{n} - z_{k}^{n}|$ is bounded by a quantity not depending on $k$ and converging to $0$ when $n$ goes to infinity.
\\
\\
On the one hand, consider
\begin{displaymath}
y_* :=
\min_{t\in [0,T]} y_t > 0
\textrm{ and }
y^* :=
\max_{t\in [0,T]} y_t < F(1).
\end{displaymath}
Since $G\circ F^{-1}$ is $C^{\infty}$ on $]0,F(1)[$, it is $C_T$-Lipschitz continuous on $[y_*,y^*]$ with $C_T > 0$. Then, for $i = 0,\dots,k$,
\begin{eqnarray}
 |\varepsilon_{i}^{n}|
 & \leqslant &
 \int_{t_{i}^{n}}^{t_{i + 1}^{n}}
 |(G\circ F^{-1})(y_s) -
 (G\circ F^{-1})(y_{t_{i + 1}^{n}})|ds
 \nonumber\\
 & \leqslant &
 C_T\|y\|_{\alpha;T}\int_{t_{i}^{n}}^{t_{i + 1}^{n}}
 |t_{i + 1}^{n} - s|^{\alpha}ds
 \nonumber\\
 & \leqslant &
 \label{euler_majoration_2}
 C_T
 \frac{T^{\alpha + 1}}{\alpha + 1}
 \|y\|_{\alpha;T}
 \frac{1}{n^{\alpha + 1}}.
\end{eqnarray}
On the other hand, let $i\in\{0,\dots,k - 1\}$ be arbitrarily chosen.
\\
\\
Assume that $y_{i + 1}^{n}\geqslant z_{i + 1}^{n}$. Then, by Proposition \ref{properties_F}.(6) :
\begin{displaymath}
(G\circ F^{-1})(y_{i + 1}^{n}) -
(G\circ F^{-1})(z_{i + 1}^{n})
\leqslant 0.
\end{displaymath}
Therefore,
\begin{eqnarray*}
 |y_{i + 1}^{n} - z_{i + 1}^{n}| & = &
 y_{i + 1}^{n} - z_{i + 1}^{n}\\
 & = &
 y_{i}^{n} - z_{i}^{n} +
 \frac{T}{n}\left[
 (G\circ F^{-1})(y_{i + 1}^{n}) -
 (G\circ F^{-1})(z_{i + 1}^{n})\right] -
 \varepsilon_{i}^{n}\\
 & \leqslant &
 |y_{i}^{n} - z_{i}^{n}| + 
 |\varepsilon_{i}^{n}|.
\end{eqnarray*}
Similarly, if $z_{i + 1}^{n} > y_{i + 1}^{n}$, then
\begin{eqnarray*}
 |z_{i + 1}^{n} - y_{i + 1}^{n}| & = &
 z_{i + 1}^{n} - y_{i + 1}^{n}\\
 & \leqslant &
 |y_{i}^{n} - z_{i}^{n}| + 
 |\varepsilon_{i}^{n}|.
\end{eqnarray*}
By putting these cases together :
\begin{equation}\label{euler_majoration_3}
\forall i = 0,\dots, k - 1\textrm{, }
|z_{i + 1}^{n} - y_{i + 1}^{n}|
\leqslant
|z_{i}^{n} - y_{i}^{n}| + 
|\varepsilon_{i}^{n}|.
\end{equation}
By applying (\ref{euler_majoration_3}) recursively from $k - 1$ down to $0$ :
\begin{eqnarray}
 |y_{k}^{n} - z_{k}^{n}| & \leqslant &
 |y_0 - z_0| +
 \sum_{i = 0}^{k - 1}
 |\varepsilon_{i}^{n}|
 \nonumber\\
 \label{euler_majoration_4}
 & \leqslant &
 C_T
 \frac{T^{\alpha + 1}}{\alpha + 1}
 \|y\|_{\alpha;T}
 \frac{1}{n^{\alpha}}
 \xrightarrow[n\rightarrow\infty]{} 0
\end{eqnarray}
because $y_0 = z_0$ and by inequality (\ref{euler_majoration_2}).
\\
\\
Moreover, by (\ref{euler_majoration_4}), there exists $N\in\mathbb N^*$ such that for every integer $n > N$,
\begin{displaymath}
|y_{k + 1}^{n} -
z_{k + 1}^{n}|\leqslant
\max_{i = 1,\dots,n}|y_{i}^{n} - z_{i}^{n}|\leqslant
m_y :=
\frac{y_*}{2}
\end{displaymath}
and
\begin{displaymath}
|y_{k + 1}^{n} -
z_{k + 1}^{n}|\leqslant
\max_{i = 1,\dots,n}|y_{i}^{n} - z_{i}^{n}|\leqslant
\hat M_y :=
M_y - y^*
\end{displaymath}
for any $M_y\in ]y^*,F(1)[$.
\\
\\
In particular,
\begin{displaymath}
m_y\leqslant
y_{t_{k + 1}^{n}} - m_y\leqslant
y_{k + 1}^{n}\leqslant
y_{t_{k + 1}^{n}} + \hat M_y\leqslant M_y.
\end{displaymath}
Since $G\circ F^{-1}$ is a decreasing map by Proposition \ref{properties_F}.(6) :
\begin{displaymath}
(G\circ F^{-1})(M_y)\leqslant
(G\circ F^{-1})(y_{k + 1}^{n})\leqslant
(G\circ F^{-1})(m_y).
\end{displaymath}
Then, by putting $M := |(G\circ F^{-1})(m_y)|\vee |(G\circ F^{-1})(M_y)|$ :
\begin{eqnarray*}
 |y_{t}^{n} - y_{k}^{n}| & = &
 |y_{k + 1}^{n} - y_{k}^{n}|\frac{t - t_{k}^{n}}{t_{k + 1}^{n} - t_{k}^{n}}\\
 & \leqslant &
 \left(TM +
 \gamma\theta^{\beta}T^{\alpha}\|w\|_{\alpha;T}\right)\frac{1}{n^{\alpha}}
 \xrightarrow[n\rightarrow\infty]{} 0.
\end{eqnarray*}
In conclusion, by inequality (\ref{euler_majoration_1}) :
\begin{eqnarray}
 \label{euler_majoration_5}
 |y_{t}^{n} - y_t| & \leqslant &
 \left(TM +
 \gamma\theta^{\beta}T^{\alpha}\|w\|_{\alpha;T} +
 \|y\|_{\alpha;T}\right)\frac{1}{n^{\alpha}} +\\
 & &
 C_T
 \frac{T^{\alpha + 1}}{\alpha + 1}
 \|y\|_{\alpha;T}
 \frac{1}{n^{\alpha}}
 \xrightarrow[n\rightarrow\infty]{} 0
 \nonumber.
\end{eqnarray}
That achieves the proof because the right hand side of inequality (\ref{euler_majoration_5}) is not depending on $k$ or $t$.
\end{proof}
\noindent
Finally, for every $n\in\mathbb N^*$ and $t\in [0,T]$, consider $x_{t}^{n} := F^{-1}(y_{t}^{n})$.
%

% Corollary : A converging approximation.

%
\begin{corollary}\label{x_approximation_convergence}
Under assumptions \ref{assumption_beta}, \ref{signal_regularity_R_+} and \ref{auxiliary_assumption_cst}, $(x^n,n\in\mathbb N^*)$ is uniformly converging with rate $n^{-\alpha}$ to
\begin{displaymath}
x :=
\pi(0,x_0;w)|_{[0,T]} =
\pi(0,x_0;w|_{[0,T]})
\end{displaymath}
with $x_0\in ]0,1[$.
\end{corollary}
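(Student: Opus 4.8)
The plan is to transfer the convergence of Theorem \ref{euler_convergence} from the $y$-variable to the $x$-variable through the change of variable $F^{-1}$, exploiting that $F^{-1}$ is globally Lipschitz on the compact interval in which the approximations live. First I would recall that by construction $x_{t}^{n} = F^{-1}(y_{t}^{n})$, and that the limit $x$ satisfies $x = \pi(0,x_0;w)|_{[0,T]} = F^{-1}(y)$, where $y$ denotes the solution of equation (\ref{auxiliary_equation_y}) with initial condition $y_0 = F(x_0)$; this is exactly the change of variable $y = F(x)$ used throughout Subsection 2.2.

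The single point deserving justification is that the approximants never leave the domain of $F^{-1}$. By Proposition \ref{existence_euler_scheme_y}, every node value $y_{k}^{n}$ belongs to $]0,F(1)[$, and since $y^n$ is the piecewise-linear interpolation of these nodes, $y_{t}^{n}\in ]0,F(1)[\subset [0,F(1)]$ for all $t\in [0,T]$; likewise $y_t\in [0,F(1)]$. Hence both $y^n$ and $y$ stay in the compact interval $[0,F(1)]$ on which $F^{-1}$ is continuously differentiable (cf. Proposition \ref{properties_F}.(2)), so $F^{-1}$ is Lipschitz continuous there with finite constant $C := \|(F^{-1})'\|_{\infty;[0,F(1)]}$.

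Then I would bound, for each $t\in [0,T]$, by the mean value inequality,
\begin{displaymath}
|x_{t}^{n} - x_t| = |F^{-1}(y_{t}^{n}) - F^{-1}(y_t)| \leqslant C|y_{t}^{n} - y_t|.
\end{displaymath}
Taking the supremum over $t\in [0,T]$ gives $\|x^n - x\|_{\infty;T}\leqslant C\|y^n - y\|_{\infty;T}$, and Theorem \ref{euler_convergence} provides that $\|y^n - y\|_{\infty;T}$ is of order $n^{-\alpha}$; multiplying the explicit uniform bound obtained there by $C$ yields the announced uniform convergence of $(x^n)$ to $x$ with rate $n^{-\alpha}$. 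There is no real obstacle in this argument, since the statement is a direct corollary: the only substantive ingredient is the confinement of the $y$-approximants to $[0,F(1)]$, which makes the Lipschitz constant of $F^{-1}$ on that compact set control the error uniformly in both $n$ and $t$.
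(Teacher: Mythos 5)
Your argument is correct and is essentially the paper's own proof: the paper likewise writes $\|x - x^n\|_{\infty;T}\leqslant C\|y - y^n\|_{\infty;T}$ with $C$ the Lipschitz constant of $F^{-1}$ on $[0,F(1)]$ (available because $F^{-1}$ is continuously differentiable there) and then invokes the explicit bound of Theorem \ref{euler_convergence}. Your additional remark that the piecewise-linear interpolants $y^n$ stay in $]0,F(1)[$ by Proposition \ref{existence_euler_scheme_y} is a point the paper leaves implicit, but it is the same proof.
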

%

% Proof.

%
\begin{proof}
For a given initial condition $x_0 > 0$, it has been shown that $x := F^{-1}(y_.)$ is the solution of equation (\ref{auxiliary_equation}) where, $y$ is the solution of equation (\ref{auxiliary_equation_y}) with initial condition $y_0 := F(x_0)$.
\\
\\
By Theorem \ref{euler_convergence} :
\begin{eqnarray*}
 \|x - x^n\|_{\infty;T} & \leqslant &
 C\|y - y^n\|_{\infty;T}\\
 & \leqslant &
 C\left(TM +
 \gamma\theta^{\beta}T^{\alpha}\|w\|_{\alpha;T} +
 \|y\|_{\alpha;T}\right)\frac{1}{n^{\alpha}} +\\
 & &
 CC_T
 \frac{T^{\alpha + 1}}{\alpha + 1}
 \|y\|_{\alpha;T}
 \frac{1}{n^{\alpha}}
 \xrightarrow[n\rightarrow\infty]{} 0
\end{eqnarray*}
where, $C$ is the Lipschitz constant of $F^{-1}$ on $[0,F(1)]$, since it is continuously differentiable on that interval.
\\
\\
Then, $(x^n,n\in\mathbb N^*)$ is uniformly converging to $x$ with rate $n^{-\alpha}$.
\end{proof}
%

% Section : Probabilistic properties of Jacobi's equation.

%
\section{Probabilistic properties of Jacobi's equation}
\noindent
Consider a stochastic process $W$ defined on $\mathbb R_+$ and satisfying the following assumption :
%

% Assumption : Gaussian process.

%
\begin{assumption}\label{assumption_gaussian_process}
$W$ is a $1$-dimensional centered Gaussian process with $\alpha$-H\"older continuous paths on the compact intervals of $\mathbb R_+$ ($\alpha\in ]0,1]$) and $W_0 = 0$.
\end{assumption}
\noindent
For instance, the fractional Brownian motion of Hurst parameter $H\in ]0,1[$ satisfies that assumption for $\alpha\in ]0,H[$.
\\
\\
The canonical probability space of $W$ is denoted by $(\Omega,\mathcal A,\mathbb P)$ with $\Omega := C^0(\mathbb R_+;\mathbb R)$. Under assumptions \ref{assumption_beta} and \ref{auxiliary_assumption_cst}, the solution $\pi(0,x_0;W)$ of equation (\ref{stochastic_auxiliary_equation}) with initial condition $x_0\in ]0,1[$ is defined as the following random variable :
\begin{displaymath}
\pi(0,x_0;W) :=
\left\{
\pi\left[0,x_0; W(\omega)\right];
\omega\in\Omega\right\}.
\end{displaymath}
The regularity of $\pi(0,.)$ studied at propositions \ref{Ito_map_continuity} and \ref{continuous_differentiability}, and corollaries \ref{monotonicity_initial_condition}, \ref{extension_by_continuity_y} and \ref{Ito_map_Lipschitz}, allows to show two probabilistic results on $X :=\pi(0,x_0;W)$ under various additional conditions on $W$ : an ergodic theorem in L. Arnold's random dynamical systems framework with some ideas of M.J. Garrido-Atienza et al. \cite{GAKN09} and B. Schmalfuss \cite{SCHMAL98}, and the existence of an explicit density with respect to Lebesgue's measure on $(\mathbb R,\mathcal B(\mathbb R))$ for each $X_t$, $t > 0$ via I. Nourdin and F. Viens \cite{NV09}. All used results and notations on random dynamical systems and Malliavin calculus are stated in Appendix A.1 and Appendix A.2 respectively.
\\
\\
First of all, without other assumptions on $W$, let us show the \textit{probabilistic} convergence of approximation schemes studied on the deterministic side at \mbox{Section 2 :}
%

% Proposition : Probabilistic convergence of approximation schemes.

%
\begin{proposition}\label{probabilistic_x_approximation_convergence}
Under assumptions \ref{assumption_beta}, \ref{auxiliary_assumption_cst} and \ref{assumption_gaussian_process},
\begin{displaymath}
\lim_{n\rightarrow\infty}
\mathbb E\left(\left\|Y^n - Y\right\|_{\infty;T}^{p}\right) = 0
\textrm{ and }
\lim_{n\rightarrow\infty}
\mathbb E\left(\left\|X^n - X\right\|_{\infty;T}^{p}\right) = 0
\end{displaymath}
for every $p\geqslant 1$ and $T > 0$, where $Y^n$ denotes the step-$n$ implicit Euler approximation scheme of $Y := F(X_.)$ on $[0,T]$, and $X^n := F^{-1}(Y_{.}^{n})$ for each $n\in\mathbb N^*$.
\end{proposition}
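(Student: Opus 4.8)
The plan is to reduce the probabilistic statement to the pathwise deterministic convergence already established in Subsection 2.3, and then to upgrade almost sure convergence to $L^p$ convergence by a boundedness argument rather than by estimating the moments of the random constants appearing in Theorem \ref{euler_convergence}.

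First I would fix $p\geqslant 1$ and $T > 0$ and work on the event on which the path $W(\omega)$ is $\alpha$-H\"older continuous on $[0,T]$. By Assumption \ref{assumption_gaussian_process} this event has full probability, so for $\mathbb P$-almost every $\omega$ the function $w := W(\omega)$ satisfies Assumption \ref{signal_regularity_R_+}. Consequently Theorem \ref{euler_convergence} and Corollary \ref{x_approximation_convergence} apply pathwise: writing $Y(\omega) = F(X(\omega)_.)$ for the solution of equation (\ref{auxiliary_equation_y}) driven by $w$, $Y^n(\omega)$ for its step-$n$ implicit Euler approximation, and $X^n(\omega) = F^{-1}(Y^n(\omega)_.)$, one obtains
\[
\|Y^n(\omega) - Y(\omega)\|_{\infty;T}\xrightarrow[n\rightarrow\infty]{} 0
\quad\textrm{and}\quad
\|X^n(\omega) - X(\omega)\|_{\infty;T}\xrightarrow[n\rightarrow\infty]{} 0.
\]
This already yields almost sure convergence of both sequences of sup-norms to $0$ (measurability of these functionals being routine, since the implicit Euler iterates depend continuously on the increments of $w$).

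The second step is to pass from almost sure to $L^p$ convergence, and the key observation is that no control on the random prefactors in (\ref{euler_majoration_5}) is needed. Indeed, by Proposition \ref{existence_euler_scheme_y} each $y_k^n$ lies in $]0,F(1)[$, so the piecewise-affine interpolant $Y^n$ is $[0,F(1)]$-valued, while $Y = F(X_.)$ is $]0,F(1)[$-valued because $X$ is $]0,1[$-valued by Corollary \ref{existence_uniqueness_J_R_+}; hence $\|Y^n - Y\|_{\infty;T}^p\leqslant F(1)^p$ deterministically. Likewise $X^n = F^{-1}(Y^n_.)$ and $X$ are $[0,1]$-valued, so $\|X^n - X\|_{\infty;T}^p\leqslant 1$. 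Since the integrands are measurable, dominated by a constant, and converge to $0$ almost surely, Lebesgue's dominated convergence theorem gives
\[
\lim_{n\rightarrow\infty}\mathbb E\left(\|Y^n - Y\|_{\infty;T}^p\right) = 0
\quad\textrm{and}\quad
\lim_{n\rightarrow\infty}\mathbb E\left(\|X^n - X\|_{\infty;T}^p\right) = 0.
\]

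The genuine subtlety I would emphasize is that the constant in the deterministic rate (\ref{euler_majoration_5}) depends on $\omega$ through $\|w\|_{\alpha;T}$, $\|Y\|_{\alpha;T}$ and, via $C_T$ and $M$, through $\min_t Y_t$ and $F(1)-\max_t Y_t$, that is, through how closely the solution approaches the boundary; these quantities need not have finite moments in any evident way, and a direct moment bound (combining a Fernique-type estimate for $\|W\|_{\alpha;T}$ with boundary-hitting tail controls) would be delicate. The boundedness argument above sidesteps this difficulty entirely, which is why I would adopt dominated convergence rather than integrate the pathwise rate; the only cost is that the explicit $n^{-\alpha}$ speed is not transferred to the $L^p$ statement, but the proposition claims convergence only.
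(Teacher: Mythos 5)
Your proof is correct and follows exactly the paper's own argument: pathwise convergence from Theorem \ref{euler_convergence} and Corollary \ref{x_approximation_convergence}, combined with the uniform deterministic bounds $Y^n_t, Y_t\in ]0,F(1)[$ and $X^n_t, X_t\in ]0,1[$, then dominated convergence. Your additional remark about why one should not attempt to integrate the pathwise rate is a sensible clarification but does not change the argument.
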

%

% Proof.

%
\begin{proof}
By Proposition \ref{euler_convergence} and Corollary \ref{x_approximation_convergence} :
\begin{displaymath}
\left\|Y^n - Y\right\|_{\infty;T}
\xrightarrow[n\rightarrow\infty]{\textrm{a.s.}} 0
\textrm{ and }
\left\|X^n - X\right\|_{\infty;T}
\xrightarrow[n\rightarrow\infty]{\textrm{a.s.}} 0.
\end{displaymath}
Moreover, for every $t\in [0,T]$, $n\in\mathbb N^*$ and $\omega\in\Omega$, $Y_{t}^{n}(\omega),Y_t(\omega)\in ]0,F(1)[$ and $X_{t}^{n}(\omega),X_t(\omega)\in ]0,1[$. Therefore, Lebesgue's theorem allows to conclude.
\end{proof}
%

% Subsection : An ergodic theorem.

%
\subsection{An ergodic theorem}
This subsection is devoted to an ergodic theorem for $Y := F(X_.)$ and then $X$, for fractional Brownian signals.
\\
\\
Consider a two-sided fractional Brownian motion $B^H$ of Hurst parameter $H\in ]0,1[$, and $(\Omega,\mathcal A,\mathbb P)$ its canonical probability space with $\Omega := C^0(\mathbb R ;\mathbb R)$. Let $\vartheta := (\theta_t,t\in\mathbb R)$ be the family of maps from the measurable space $(\Omega,\mathcal A)$ into itself, called Wiener shift, such that :
\begin{displaymath}
\forall\omega\in\Omega
\textrm{, }
\forall t\in\mathbb R
\textrm{, }
\theta_t\omega :=
\omega_{t + .} -\omega_t.
\end{displaymath}
By B. Maslowski and B. Schmalfuss \cite{MS04}, $(\Omega,\mathcal A,\mathbb P,\vartheta)$ is an ergodic metric DS.
%

% Theorem : Ergodic theorem.

%
\begin{theorem}\label{Jacobi_ergodic_theorem}
Under assumptions \ref{assumption_beta} and \ref{auxiliary_assumption_cst}, let $Y$ be the solution of the following stochastic differential equation :
\begin{equation}\label{auxiliary_two_sided}
Y_t =
Y_0 +
\int_{0}^{t}(G\circ F^{-1})(Y_s)ds +
\gamma\theta^{\beta}B_{t}^{H}
\textrm{ $;$ }
t\in\mathbb R_+
\end{equation}
where, $Y_0 : \Omega\rightarrow ]0,F(1)[$ is an (integrable) random variable.
\begin{enumerate}
 \item There exists an (integrable) random variable $\hat Y :\Omega\rightarrow ]0,F(1)[$ such that
 \begin{displaymath}
 \lim_{t\rightarrow\infty}
 |Y_t(\omega) - \hat Y(\theta_t\omega)| = 0
 \end{displaymath}
 for almost every $\omega\in\Omega$.
 \item For any Lipschitz continuous function $f : [0,F(1)]\rightarrow\mathbb R$,
 \begin{displaymath}
 \lim_{T\rightarrow\infty}
 \frac{1}{T}
 \int_{0}^{T}
 f(Y_t)dt =
 \mathbb E[f(\hat Y)]
 \textrm{ $\mathbb P$-a.s.}
 \end{displaymath}
\end{enumerate}
\end{theorem}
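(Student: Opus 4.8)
The plan is to realise $Y$ as the one-point pullback attractor (a random fixed point) of the random dynamical system generated by equation (\ref{auxiliary_two_sided}) over the ergodic metric dynamical system $(\Omega,\mathcal A,\mathbb P,\vartheta)$, and then to transfer the resulting almost-sure forward convergence into the time-average statement via the continuous-time ergodic theorem (cf. Appendix A.1). Throughout, I write $y_t(y_0,\omega)$ for the value at time $t$ of the solution of (\ref{auxiliary_two_sided}) started at $y_0\in[0,F(1)]$ and driven by $\omega$; since the coefficients are constant and $B^H$ has stationary increments realised through $\vartheta$, the cocycle identity $y_{t+s}(y_0,\omega)=y_t(y_s(y_0,\omega),\theta_s\omega)$ holds. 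The one decisive ingredient is the pathwise exponential contraction of Corollary \ref{Ito_map_Lipschitz}, namely $|y_t(y_0^1,\omega)-y_t(y_0^2,\omega)|\leqslant|y_0^1-y_0^2|e^{-lt}$ for all $y_0^1,y_0^2\in[0,F(1)]$.

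For part (1), I would first prove existence of the pullback limit. Fix $y_0\in\,]0,F(1)[$ and set $\psi_t(\omega):=y_t(y_0,\theta_{-t}\omega)$. Splitting the cocycle at time $t-s$ gives $\psi_t(\omega)=y_s(z,\theta_{-s}\omega)$ with $z:=y_{t-s}(y_0,\theta_{-t}\omega)\in[0,F(1)]$, whereas $\psi_s(\omega)=y_s(y_0,\theta_{-s}\omega)$; since both carry the same driver $\theta_{-s}\omega$ and the same time $s$, the contraction yields $|\psi_t(\omega)-\psi_s(\omega)|\leqslant|z-y_0|e^{-ls}\leqslant F(1)e^{-ls}$ for every $t>s$. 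Hence $(\psi_t(\omega))_{t\geqslant0}$ is Cauchy for almost every $\omega$ and converges to a limit $\hat Y(\omega)$, measurable as an almost-sure limit of measurable maps and, being $[0,F(1)]$-valued, integrable; the limit is independent of $y_0$ by the same contraction. Passing to the limit in the cocycle identity $y_r(y_t(y_0,\theta_{-t}\omega),\omega)=y_{t+r}(y_0,\theta_{-t}\omega)$, using continuity of $y_r(\cdot,\omega)$, gives the equivariance $y_r(\hat Y(\omega),\omega)=\hat Y(\theta_r\omega)$ for all $r\geqslant0$. Writing $\hat Y(\omega)=y_1(\hat Y(\theta_{-1}\omega),\theta_{-1}\omega)$ and invoking corollaries \ref{monotonicity_initial_condition} and \ref{extension_by_continuity_y} (which force the time-$1$ image of $[0,F(1)]$ into $]0,F(1)[$) shows $\hat Y$ is $]0,F(1)[$-valued. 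Finally, since $Y_t(\omega)=y_t(Y_0(\omega),\omega)$ and, by equivariance, $\hat Y(\theta_t\omega)=y_t(\hat Y(\omega),\omega)$, the contraction gives $|Y_t(\omega)-\hat Y(\theta_t\omega)|\leqslant|Y_0(\omega)-\hat Y(\omega)|e^{-lt}\to0$ almost surely, which is (1).

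For part (2), I would apply the ergodic theorem on $(\Omega,\mathcal A,\mathbb P,\vartheta)$ (Maslowski--Schmalfuss, cf. Appendix A.1) to the integrable observable $g:=f\circ\hat Y$, obtaining $\tfrac1T\int_0^T f(\hat Y(\theta_t\omega))\,dt\to\mathbb E[f(\hat Y)]$ almost surely. It then remains to replace $\hat Y(\theta_t\omega)$ by $Y_t$: if $L_f$ denotes the Lipschitz constant of $f$, part (1) and the contraction give $\tfrac1T\int_0^T|f(Y_t)-f(\hat Y(\theta_t\omega))|\,dt\leqslant\tfrac{L_f}{T}\int_0^T|Y_0-\hat Y|e^{-lt}\,dt\leqslant\tfrac{L_f|Y_0-\hat Y|}{lT}\to0$, so the two time averages share the same limit, which is the claim.

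The routine estimates here are all immediate consequences of Corollary \ref{Ito_map_Lipschitz}; the genuine work lies upstream, in the random-dynamical-systems prerequisites I am assuming: that equation (\ref{auxiliary_two_sided}), driven by fractional Brownian motion and solved in the Young/rough-paths sense, generates a (perfect) measurable cocycle over $\vartheta$ with the cocycle identity holding on a single full-measure set of $\omega$ simultaneously for all $s,t$. This is the point requiring care (it is where the ideas of Garrido-Atienza et al. and Schmalfuss enter), together with the measurability and interior confinement of the pullback limit $\hat Y$; once these are in place the contraction does the rest.
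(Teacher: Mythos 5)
Your proposal is correct and follows essentially the same route as the paper: the pullback construction of the random fixed point $\hat Y$ via the exponential contraction of Corollary \ref{Ito_map_Lipschitz}, interior confinement via Corollary \ref{extension_by_continuity_y}, equivariance from the cocycle identity, and then the Birkhoff--Chintchin theorem applied to $f\circ\hat Y$ plus a vanishing correction term. The only cosmetic differences are that you establish the Cauchy property directly in continuous time rather than first along integers, and you bound the correction in part (2) by the explicit rate $e^{-lt}$ instead of the paper's $\varepsilon/2$ argument.
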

%

% Proof.

%
\begin{proof}
In a first step, the existence of a (generalized) random fixed point is established for the continuous random dynamical system naturally defined by equation (\ref{auxiliary_two_sided}) on the metric space $]0,F(1)[$ over the ergodic metric DS $(\Omega,\mathcal A,\mathbb P,\vartheta)$. The second step is devoted to the ergodic theorem for $Y$ stated at point 2.
\\
\\
\textbf{Step 1.} Let $\varphi :\mathbb R_+\times\Omega\times ]0,F(1)[\rightarrow ]0,F(1)[$ be the map defined by :
\begin{displaymath}
\varphi(t,\omega)x :=
x +
\int_{0}^{t}(G\circ F^{-1})[\varphi(s,\omega)x]ds +
\gamma\theta^{\beta}B_{t}^{H}(\omega).
\end{displaymath}
It is a continuous random dynamical system on $]0,F(1)[$ over the metric DS $(\Omega,\mathcal A,\mathbb P,\vartheta)$. Indeed, for every $s,t\in\mathbb R_+$, $\omega\in\Omega$ and $x\in ]0,F(1)[$ ; $\varphi(0,\omega)x = x$ and
\begin{eqnarray*}
 \varphi(s + t,\omega)x & = &
 x +
 \int_{0}^{s + t}(G\circ F^{-1})[\varphi(u,\omega)x]du +
 \gamma\theta^{\beta}B_{s + t}^{H}(\omega)\\
 & = &
 \varphi(s,\omega)x +
 \int_{s}^{s + t}(G\circ F^{-1})[\varphi(u,\omega)x]du +
 \gamma\theta^{\beta}[B_{s + t}^{H}(\omega) - B_{s}^{H}(\omega)]\\
 & = &
 \varphi(s,\omega)x +
 \int_{0}^{t}(G\circ F^{-1})[\varphi(s + u,\omega)x]du +
 \gamma\theta^{\beta}
 B_{t}^{H}(\theta_s\omega).
\end{eqnarray*}
Then, $\varphi(s + t,\omega) = \varphi(t,\theta_s\omega)\circ\varphi(s,\omega)$. In other words, $\varphi$ satisfies the cocycle property. Proposition \ref{Ito_map_continuity} allows to conclude. That RDS has two additional \mbox{properties :}
\begin{itemize}
 \item\textbf{Additional property 1.} By Corollary \ref{extension_by_continuity_y}, for every $t\in\mathbb R_+$ and $\omega\in\Omega$, the limits
 \begin{displaymath}
 \varphi(t,\omega)0 :=
 \lim_{x\rightarrow 0}
 \varphi(t,\omega)x
 \textrm{ and }
 \varphi(t,\omega)F(1) :=
 \lim_{x\rightarrow F(1)}
 \varphi(t,\omega)x
 \end{displaymath}
 exist, and belong to $]0,F(1)[$ if and only if $t > 0$.
 \item\textbf{Additional property 2.} By Corollary \ref{Ito_map_Lipschitz}, there exists $l > 0$ such that for every $t\in\mathbb R_+$, $\omega\in\Omega$ and $x,y\in ]0,F(1)[$,
 \begin{displaymath}
 |\varphi(t,\omega)x -
 \varphi(t,\omega)y|
 \leqslant
 e^{-lt}|x - y|.
 \end{displaymath}
\end{itemize}
Let us now show that there exists a random variable $\hat Y :\Omega\rightarrow ]0,F(1)[$ such that
\begin{displaymath}
\varphi(t,\omega)\hat Y(\omega) =
\hat Y(\theta_t\omega)
\end{displaymath}
for every $t\in\mathbb R_+$ and $\omega\in\Omega$.
\\
\\
On the one hand, by the cocycle property of $\varphi$ together with \textit{additional property 2}, for every $n\in\mathbb N$, $\omega\in\Omega$ and $x\in ]0,F(1)[$,
\begin{eqnarray*}
 |\varphi(n,\theta_{-n}\omega)x -
 \varphi(n + 1,\theta_{-(n + 1)}\omega)x| & = &
 |\varphi(n,\theta_{-n}\omega)x -\\
 & &
 [\varphi(n,\theta_{-n}\omega)\circ
 \varphi(1,\theta_{-(n + 1)}\omega)]x|\\
 & \leqslant &
 e^{-ln}|x - \varphi(1,\theta_{-(n + 1)}\omega)x|\\
 & \leqslant & F(1)e^{-ln}.
\end{eqnarray*}
Then, $\{\varphi(n,\theta_{-n}\omega)x ; n\in\mathbb N\}$ is a Cauchy sequence, and its limit $\hat Y(\omega)$ is not depending on $x$, because for any other $y\in ]0,F(1)[$,
\begin{displaymath}
|\varphi(n,\theta_{-n}\omega)x -
\varphi(n,\theta_{-n}\omega)y|
\leqslant
e^{-ln}|x - y|
\xrightarrow[n\rightarrow\infty]{} 0.
\end{displaymath}
Moreover, for every $t\in\mathbb R_+$,
\begin{eqnarray*}
 |\varphi(t,\theta_{-t}\omega)x - \hat Y(\omega)| & \leqslant &
 |\varphi(t,\theta_{-t}\omega)x -\varphi([t],\theta_{-[t]}\omega)x| +
 |\varphi([t],\theta_{-[t]}\omega)x -\hat Y(\omega)|\\
 & \leqslant &
 |[\varphi([t],\theta_{-[t]}\omega)\circ\varphi(t - [t],\theta_{-t}\omega)]x
 -\varphi([t],\theta_{-[t]}\omega)x| +\\
 & &
 |\varphi([t],\theta_{-[t]}\omega)x -\hat Y(\omega)|\\
 & \leqslant &
 e^{-l[t]}
 |\varphi(t - [t],\theta_{-t}\omega)x - x| +
 |\varphi([t],\theta_{-[t]}\omega)x -\hat Y(\omega)|\\
 & \leqslant &
 F(1)e^{-l[t]} +
 |\varphi([t],\theta_{-[t]}\omega)x -\hat Y(\omega)|
 \xrightarrow[t\rightarrow\infty]{} 0.
\end{eqnarray*}
Therefore,
\begin{equation}\label{hat_Y_definition}
\lim_{t\rightarrow\infty}
|\varphi(t,\theta_{-t}\omega)x -\hat Y(\omega)| = 0.
\end{equation}
On the other hand, by the cocycle property of $\varphi$, for every $t\in\mathbb R_+$, $n\in\mathbb N$, $\omega\in\Omega$ and $x\in ]0,F(1)[$,
\begin{eqnarray}
 \label{cocycle_application}
 [\varphi(t,\omega)\circ
 \varphi(n,\theta_{-n}\omega)]x
 & = &
 \varphi(t + n,\theta_{-n}\omega)x\\
 & = &
 \varphi[t + n,(\theta_{-(t + n)}\circ\theta_t)\omega]x
 \nonumber.
\end{eqnarray}
The continuity of the random dynamical system $\varphi$, \textit{additional property 1} and (\ref{hat_Y_definition}) imply that :
\begin{itemize}
 \item When $n\rightarrow\infty$ in equality (\ref{cocycle_application}) :
 \begin{displaymath}
 \varphi(t,\omega)\hat Y(\omega) =
 \hat Y(\theta_t\omega).
 \end{displaymath}
 \item By replacing $\omega$ by $\theta_{-t}\omega$ in equality (\ref{cocycle_application}) for $t > 0$, when $n\rightarrow\infty$ :
 \begin{displaymath}
 \varphi(t,\theta_{-t}\omega)
 \hat Y(\theta_{-t}\omega) =
 \hat Y(\omega)\in ]0,F(1)[.
 \end{displaymath}
\end{itemize}
Since $(\Omega,\mathcal A,\mathbb P,\vartheta)$ is an ergodic metric DS and $\hat Y$ is a (generalized) random fixed point of the continuous RDS $\varphi$, $(\hat Y\circ\theta_t,t\in\mathbb R_+)$ is a stationary solution of equation (\ref{auxiliary_two_sided}). Therefore, for almost every $\omega\in\Omega$,
\begin{displaymath}
\lim_{t\rightarrow\infty}
|Y_t(\omega) - \hat Y(\theta_t\omega)| = 0
\end{displaymath}
because all solutions of equation (\ref{auxiliary_two_sided}) converge pathwise forward to each other in time by \textit{additional property 2}.
\\
\\
\textbf{Step 2.} Let $f : [0,F(1)]\rightarrow\mathbb R$ be Lipschitz continuous. For every $T > 0$ and $\omega\in\Omega$,
\begin{displaymath}
\frac{1}{T}\int_{0}^{T}
f\left[Y_t(\omega)\right]dt =
A_T(\omega) + B_T(\omega)
\end{displaymath}
where,
\begin{displaymath}
A_T(\omega) :=
\frac{1}{T}
\int_{0}^{T}
f[\hat Y(\theta_t\omega)]dt
\textrm{ and }
B_T(\omega) :=
\frac{1}{T}
\int_{0}^{T}[
f[Y_t(\omega)] - f[\hat Y(\theta_t\omega)]]dt.
\end{displaymath}
On the one hand, since $(\Omega,\mathcal A,\mathbb P,\vartheta)$ is an ergodic metric DS, by the Birkhoff-Chintchin's theorem (Theorem \ref{ergodic_theorem}) :
\begin{displaymath}
\lim_{T\rightarrow\infty}
A_T =
\mathbb E[f(\hat Y)]
\textrm{ $\mathbb P$-a.s.}
\end{displaymath}
On the other hand, since $f$ is Lipschitz continuous on $[0,F(1)]$, the first step of the proof implies that for almost every $\omega\in\Omega$ and $\varepsilon > 0$ arbitrarily chosen, there exists $T_0 > 0$ such that :
\begin{displaymath}
\forall t > T_0\textrm{, }
|f[Y_t(\omega)] - f[\hat Y(\theta_t\omega)]|
\leqslant
\frac{\varepsilon}{2}.
\end{displaymath}
Then, for every $T > T_0$,
\begin{eqnarray*}
 |B_T(\varepsilon)| & \leqslant &
 \frac{1}{T}\int_{0}^{T_0}
 |f[Y_t(\omega)] - f[\hat Y(\theta_t\omega)]|dt +
 \frac{1}{T}\int_{T_0}^{T}
 |f[Y_t(\omega)] - f[\hat Y(\theta_t\omega)]|dt\\
 & \leqslant &
 \frac{1}{T}\int_{0}^{T_0}
 |f[Y_t(\omega)] - f[\hat Y(\theta_t\omega)]|dt +
 \frac{\varepsilon}{2}.
\end{eqnarray*}
Moreover, there exists $T_1\geqslant T_0$ such that :
\begin{displaymath}
\forall T > T_1
\textrm{, }
\frac{1}{T}\int_{0}^{T_0}
|f[Y_t(\omega)] - f[\hat Y(\theta_t\omega)]|dt\leqslant
\frac{\varepsilon}{2}.
\end{displaymath}
Therefore,
\begin{displaymath}
\lim_{T\rightarrow\infty}
B_T = 0
\textrm{ $\mathbb P$-a.s.}
\end{displaymath}
That achieves the proof.
\end{proof}
\noindent
\textbf{Remarks :} With notations of Theorem \ref{Jacobi_ergodic_theorem} :
\begin{enumerate}
 \item Consider $\mu_t$ and $\hat\mu_t$ the respective probability distributions of $Y_t$ and $\hat Y\circ\theta_t$ under $\mathbb P$ for every $t\in\mathbb R_+$. Since $(\hat Y\circ\theta_t,t\in\mathbb R_+)$ is a stationary process by construction, $\hat\mu_t =\hat\mu$ for each $t\in\mathbb R_+$, where $\hat\mu$ denotes the probability distribution of $\hat Y$ under $\mathbb P$.
 \\
 By Kantorovich-Rubinstein's dual representation of the Wasserstein metric $W_1$ (cf. \cite{VILLANI08}, Theorem 5.10), Theorem \ref{Jacobi_ergodic_theorem}.(1) and Lebesgue's theorem :
 \begin{eqnarray*}
  W_1(\mu_t,\hat\mu) & = &
  W_1(\mu_t,\hat\mu_t)\\
  & = &
  \sup\left\{
  \int_{\bar S} f(y)(\mu_t -\hat\mu_t)(dy) ;
  f : \bar S\rightarrow\mathbb R
  \textrm{ Lipschitz with constant $1$}
  \right\}\\
  & \leqslant &
  \mathbb E(|Y_t -\hat Y\circ\theta_t|)
  \xrightarrow[t\rightarrow\infty]{} 0
 \end{eqnarray*}
 where, $S := ]0,F(1)[$.
 In particular,
 \begin{displaymath}
 Y_t
 \xrightarrow[t\rightarrow\infty]{d}\hat Y.
 \end{displaymath}
 \item By Corollary \ref{extension_by_continuity_y}, the map $\hat\varphi :\mathbb R_+\times\Omega\times\bar S\rightarrow\bar S$ defined by
 \begin{displaymath}
 \hat\varphi(t,\omega)x :=
 \left\{
 \begin{array}{rcl}
  \varphi(t,\omega)0 & \textrm{if} & x = 0\\
  \varphi(t,\omega)x & \textrm{if} & x\in ]0,F(1)[\\
  \varphi(t,\omega)F(1) & \textrm{if} & x = F(1)
 \end{array}
 \right.
 \end{displaymath}
 is a continuous RDS on $\bar S$ over the metric DS $(\Omega,\mathcal A,\mathbb P,\vartheta)$. Since $\bar S$ is a compact metric space, by Theorem \ref{invariant_measure_compact}, there exists at least one $\hat\varphi$-invariant probability measure (cf. Definition \ref{invariant_measures_RDS}).
 \\
 \\
 Theorem \ref{Jacobi_ergodic_theorem} allows to get an explicit $\varphi$-invariant probability measure and its factorization with respect to $\mathbb P$ :
 \\
 Consider $\Theta$ the skew product of the metric DS $(\Omega,\mathcal A,\mathbb P,\vartheta)$ and the RDS $\varphi$ on $S$, and the measure $\mu\in\mathcal P_{\mathbb P}(\Omega\times S)$ defined by
 \begin{displaymath}
 \mu(d\omega,dx) :=
 \delta_{\{\hat Y(\omega)\}}(dx)\mathbb P(d\omega).
 \end{displaymath}
 Since $S$ is a Polish space, $(\Omega,\mathcal A,\mathbb P,\vartheta)$ is an ergodic metric DS and $\hat Y$ is a (generalized) random fixed point of the continuous RDS $\varphi$ ; for every continuous and bounded map $f :\Omega\times S\rightarrow\mathbb R$ and every $t\in\mathbb R_+$,
 \begin{eqnarray*}
  (\Theta_t\mu)(f) & = &
  \int_{\Omega\times S}
  f[\Theta_t(\omega,x)]
  \mu(d\omega,dx)\\
  & = &
  \int_{\Omega\times S}
  f[\theta_t\omega,\varphi(t,\omega)x]
  \delta_{\{\hat Y(\omega)\}}(dx)\mathbb P(d\omega)\\
  & = &
  \int_{\Omega}
  f[\theta_t\omega,\varphi(t,\omega)\hat Y(\omega)]\mathbb P(d\omega)\\
  & = &
  \int_{\Omega}
  f[\theta_t\omega,\hat Y(\theta_t\omega)]\mathbb P(d\omega)\\
  & = &
  \int_{\Omega}
  f[\omega,\hat Y(\omega)]\mathbb P(d\omega)\\
  & = &
  \int_{\Omega\times S}
  f(\omega,x)\delta_{\{\hat Y(\omega)\}}(dx)\mathbb P(d\omega)
  = \mu(f).
 \end{eqnarray*}
 Therefore, $\mu$ is a $\varphi$-invariant probability measure.
\end{enumerate}
%

% Corollary : Ergodic theorem for $X$.

%
\begin{corollary}\label{Jacobi_ergodic_corollary}
Under assumptions \ref{assumption_beta} and \ref{auxiliary_assumption_cst}, consider
\begin{displaymath}
X :=
\pi\left(0,X_0;B^H\right)
\end{displaymath}
where, $X_0 : \Omega\rightarrow ]0,1[$ is an (integrable) random variable.
\begin{enumerate}
 \item There exists an (integrable) random variable $\hat X :\Omega\rightarrow ]0,1[$ such that
 \begin{displaymath}
 \lim_{t\rightarrow\infty}
 |X_t(\omega) - \hat X(\theta_t\omega)| = 0
 \end{displaymath}
 for almost every $\omega\in\Omega$.
 \item For any Lipschitz continuous function $f : [0,1]\rightarrow\mathbb R$,
 \begin{displaymath}
 \lim_{T\rightarrow\infty}
 \frac{1}{T}
 \int_{0}^{T}
 f(X_t)dt =
 \mathbb E[f(\hat X)]
 \textrm{ $\mathbb P$-a.s.}
 \end{displaymath}
\end{enumerate}
\end{corollary}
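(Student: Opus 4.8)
The plan is to transfer Theorem \ref{Jacobi_ergodic_theorem} from $Y$ to $X$ through the change of variable $X = F^{-1}(Y)$, exploiting that $F^{-1}$ is a continuously differentiable bijection from $[0,F(1)]$ onto $[0,1]$, and hence Lipschitz continuous on this compact interval. Recall that with initial condition $Y_0 := F(X_0)$, which is $]0,F(1)[$-valued since $X_0$ is $]0,1[$-valued, the process $Y := F(X_.)$ solves equation (\ref{auxiliary_two_sided}); so the hypotheses of Theorem \ref{Jacobi_ergodic_theorem} are met and $X_t = F^{-1}(Y_t)$ for every $t\in\mathbb R_+$.

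For point (1), I would set $\hat X := F^{-1}(\hat Y)$, where $\hat Y$ is the integrable $]0,F(1)[$-valued random variable provided by Theorem \ref{Jacobi_ergodic_theorem}.(1). Then $\hat X$ is $]0,1[$-valued, hence bounded and integrable, because $F^{-1}$ maps $]0,F(1)[$ into $]0,1[$. Since $X_t(\omega) = F^{-1}(Y_t(\omega))$ and $\hat X(\theta_t\omega) = F^{-1}(\hat Y(\theta_t\omega))$, the uniform continuity of $F^{-1}$ on the compact set $[0,F(1)]$ turns the pathwise convergence $|Y_t(\omega) - \hat Y(\theta_t\omega)|\to 0$ into $|X_t(\omega) - \hat X(\theta_t\omega)|\to 0$ for almost every $\omega\in\Omega$.

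For point (2), given a Lipschitz continuous map $f : [0,1]\rightarrow\mathbb R$, I would introduce $g := f\circ F^{-1} : [0,F(1)]\rightarrow\mathbb R$. As the composition of the Lipschitz map $f$ with the map $F^{-1}$, which is Lipschitz on $[0,F(1)]$ by its $C^1$ regularity on that compact interval, the function $g$ is itself Lipschitz continuous. Applying Theorem \ref{Jacobi_ergodic_theorem}.(2) to $g$, and using the identities $g(Y_t) = f(X_t)$ together with $g(\hat Y) = f(\hat X)$, yields the claimed Birkhoff-type limit for $f(X_t)$ with limit $\mathbb E[f(\hat X)]$, $\mathbb P$-almost surely.

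Since the whole argument is a direct push-forward along the diffeomorphism $F$, no genuine obstacle arises; the only point requiring a little care is verifying that $f\circ F^{-1}$ remains Lipschitz continuous so that Theorem \ref{Jacobi_ergodic_theorem}.(2) applies, and this follows immediately from the continuous differentiability of $F^{-1}$ on the compact interval $[0,F(1)]$.
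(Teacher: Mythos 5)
Your argument is correct and is essentially the paper's own proof: the paper likewise sets $\hat X := F^{-1}(\hat Y)$ and invokes the continuous differentiability (hence Lipschitz continuity) of $F^{-1}$ on the compact interval $[0,F(1)]$ to transfer both conclusions of Theorem \ref{Jacobi_ergodic_theorem} to $X$. Your write-up merely spells out the composition $f\circ F^{-1}$ for point (2), which the paper leaves implicit.
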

%

% Proof.

%
\begin{proof}
For every $t\in\mathbb R_+$, $X_t = F^{-1}(Y_t)$ where $Y$ is the solution of equation (\ref{auxiliary_two_sided}) with initial condition $Y_0 := F(X_0)$. Since $F^{-1}$ is continuously differentiable from $[0,F(1)]$ into $[0,1]$, by putting $\hat X := F^{-1}(\hat Y)$, Corollary \ref{Jacobi_ergodic_corollary} is a straightforward application of Theorem \ref{Jacobi_ergodic_theorem}.
\end{proof}
\noindent
This subsection concludes on numerical illustrations of points 1 and 2 of Corollary \ref{Jacobi_ergodic_corollary}. \textit{WaveLab802} (\textit{Scilab} package) is used to get wavelet-based simulations of the fractional Brownian motion (cf. \cite{DIEKER04}, Section 2.2.5).
\begin{enumerate}
 \item The converging approximations provided at Theorem \ref{euler_convergence} and Corollary \ref{x_approximation_convergence} are computed with the following values of the parameters :
 \begin{center}
 \begin{tabular}{|l l|}
 \hline
 Parameters & Values\\
 \hline\hline
 $T$ & $10$\\
 $H$ & $0.6$\\
 $\beta$, $\mu$ & $0.5$\\
 $\theta$, $\gamma$ & $1$\\
 \hline
 $n$ & $250$\\
 \hline
 \end{tabular}
 \end{center}
 For one sample path $B^H(\omega)$ ($\omega\in\Omega$) of the fractional Brownian motion $B^H$ of Hurst parameter $H$, the solution $\pi[0,x_0;B^H(\omega)]$ is approximated on $[0,T]$ for $x_0 = 0.01,0.28,0.89$ :
 \begin{figure}[H]
 \centering
 \includegraphics[scale = 0.45]{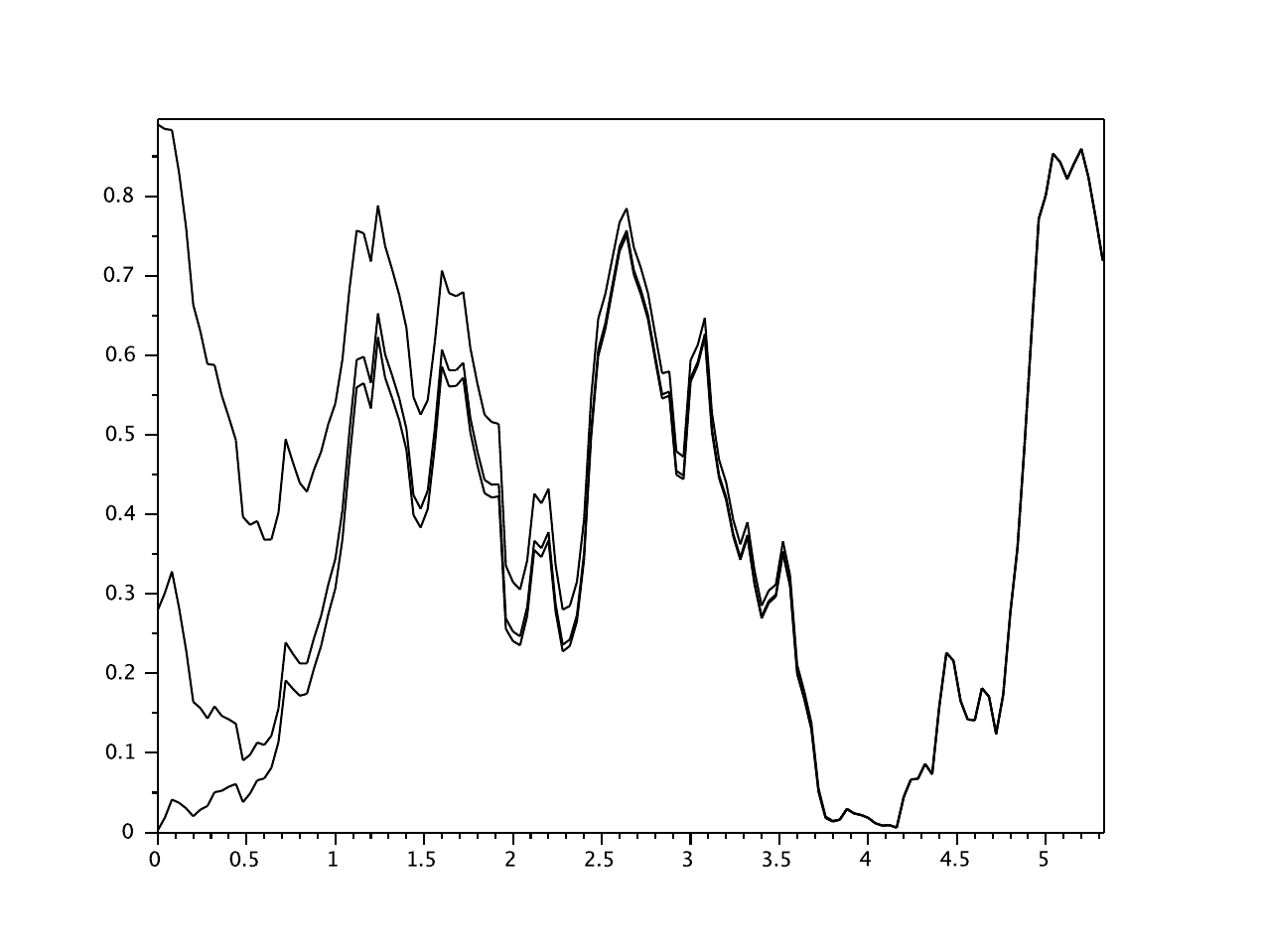}
 \caption{$t\in [0,T]\mapsto\pi[0,x_0;B^H(\omega)]_t$ for $x_0 = 0.01,0.28,0.89$}
 \end{figure}
 \item The converging approximations provided at Theorem \ref{euler_convergence} and Corollary \ref{x_approximation_convergence} are computed with the following values of the parameters :
 \begin{center}
 \begin{tabular}{|l l|}
 \hline
 Parameters & Values\\
 \hline\hline
 $T$ & $120$\\
 $H$ & $0.6$\\
 $\beta$, $\mu$ & $0.5$\\
 $\theta$, $\gamma$ & $1$\\
 $x_0$ & $F^{-1}(1.5)$\\
 \hline
 $n$ & $850$\\
 \hline
 \end{tabular}
 \end{center}
 Consider
 \begin{displaymath}
 S_t :=
 \frac{1}{t}
 \int_{0}^{t}\pi\left(0,x_0;B^H\right)_sds
 \textrm{ $;$ }\forall t\in [0,T].
 \end{displaymath}
 Approximations of four sample paths of the process $S$ on $[0,T]$ :
 \begin{figure}[H]
 \centering
 \includegraphics[scale = 0.45]{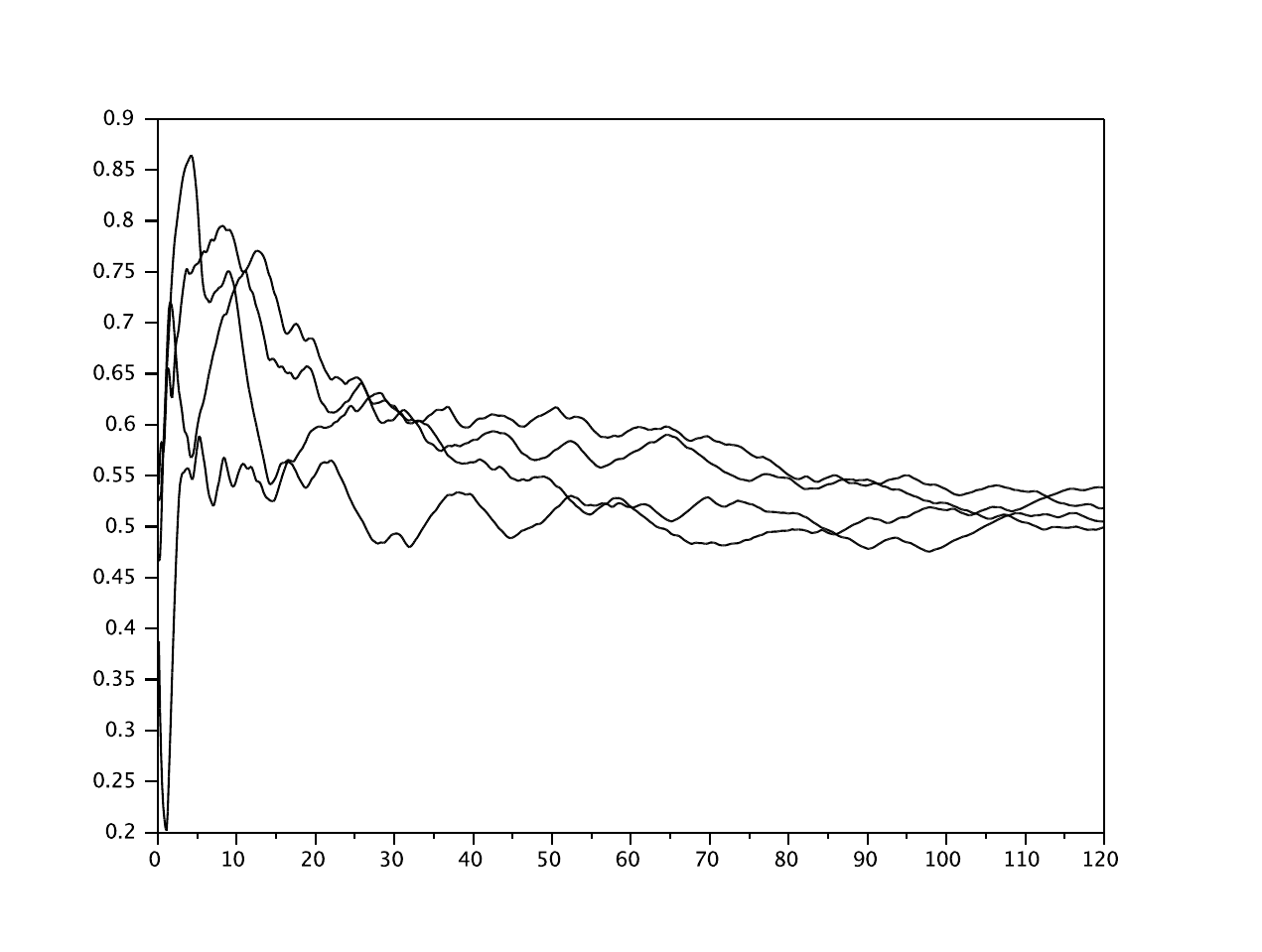}
 \caption{Sample paths of the process $S$ on $[0,T]$}
 \end{figure}
\end{enumerate}
\noindent
\textbf{Remark.} Note that for $\beta = 0.5$ ; $F$, $F^{-1}$ and $b$ can be computed faster because they have explicit expressions :
\begin{eqnarray*}
 F(x) & = & \frac{\pi}{2} +\arcsin(2x - 1)\textrm{ ; $\forall x\in [0,1]$,}\\
 F^{-1}(y) & = & \frac{1}{2}\left[\sin\left(y -\frac{\pi}{2}\right) + 1\right]\textrm{ ; $\forall y\in [0,\pi]$ and}\\
 (G\circ F^{-1})(y) & = & \frac{2\mu - 1 -\sin\left(y -\pi/2\right)}{\cos\left(y -\pi/2\right)}\textrm{ ; $\forall y\in [0,\pi]$.}
\end{eqnarray*}
%

% Subsection : Explicit density with respect to Lebesgue's measure.

%
\subsection{Explicit density with respect to Lebesgue's measure}
Consider $T > 0$, $t\in ]0,T]$ and, under assumptions \ref{assumption_beta} and \ref{auxiliary_assumption_cst}, $Y_t := F(X_t)$ satisfying
\begin{equation}\label{auxiliary_equation_y_stochastic}
Y_t =
y_0 +
\int_{0}^{t}
G\left(X_s\right)ds +
\gamma\theta^{\beta}W_t
\end{equation}
where, $y_0 := F(x_0)$ and $W$ is a stochastic process satisfying the following assumption :
%

% Assumption : Gaussian assumption.

%
\begin{assumption}\label{Gaussian_assumption}
$W$ is a $1$-dimensional centered Gaussian process defined on $[0,T]$, with $\alpha$-H\"older continuous paths and $W_0 = 0$, such that :
\begin{enumerate}
 \item The covariance function $R$ of $W$ satisfies $R(t,t) > 0$ for every $t\in ]0,T]$.
 \item $\langle\varphi_1,\psi_1\rangle_{\mathcal H}\geqslant\langle\varphi_2,\psi_2\rangle_{\mathcal H}$ for every $\varphi_1,\varphi_2,\psi_1,\psi_2\in\mathcal H$ such that
 \begin{displaymath}
 \varphi_1(t)\geqslant\varphi_2(t)\geqslant 0\textrm{ and }
 \psi_1(t)\geqslant\psi_2(t)\geqslant 0\textrm{ $;$ }
 \forall t\in [0,T].
 \end{displaymath}
\end{enumerate}
\end{assumption}
\noindent
\textbf{Example.} A fractional Brownian motion $B^H$ of Hurst parameter $H\in ]0,1[$ satisfies Assumption \ref{Gaussian_assumption} for every $\alpha\in ]0,H[$ (cf. D. Nualart \cite{NUALART06}, Section 5.1.3).
\\
Precisely, $B^H$ satisfies Assumption \ref{Gaussian_assumption} because of the expression of the scalar product $\langle .,.\rangle_{\mathcal H}$ on the reproducing kernel Hilbert space $\mathcal H$ of $B^H$ recalled in Appendix A.2.
%

% Lemma : Integrability of the Malliavin covariance matrix.

%
\begin{lemma}\label{integrability_Malliavin_matrix}
Under assumptions \ref{assumption_beta}, \ref{auxiliary_assumption_cst} and \ref{Gaussian_assumption}, the random variable $Y_t$ belongs to $\mathbb D^{1,2}$ and
\begin{displaymath}
\mathbf D_.Y_t =
\gamma\theta^{\beta}
\mathbf 1_{[0,t]}(.)
\exp\left[\int_{.}^{t}
\frac{G'\left(X_u\right)}{F'\left(X_u\right)}du\right].
\end{displaymath}
\end{lemma}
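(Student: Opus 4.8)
The plan is to treat (\ref{auxiliary_equation_y_stochastic}) as an equation with additive noise and a drift $b := G\circ F^{-1}$ which is smooth on $]0,F(1)[$ and, by Proposition \ref{properties_F}.(6), satisfies the uniform bound $b'\leqslant -l < 0$. First I would establish that $Y_t\in\mathbb D^{1,2}$; then I would differentiate the equation to obtain a closed linear equation for $\mathbf D_\cdot Y_t$; and finally I would solve that equation explicitly.

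For the membership in $\mathbb D^{1,2}$, the idea is to transfer the deterministic differentiability of the It\^o map to the Malliavin calculus. By Proposition \ref{continuous_differentiability} the map $\pi(0,\cdot)$ is continuously differentiable from $]0,1[\times C^\alpha([0,T];\mathbb R)$ into $C^0([0,T];]0,1[)$, hence so is $W\mapsto Y_t=F(\pi(0,x_0;W)_t)$ because $F$ is smooth on $]0,1[$. By Assumption \ref{Gaussian_assumption}.(3) the Cameron-Martin space $\mathcal H$ embeds continuously into $C^\alpha([0,T];\mathbb R)$, so each perturbation of $W$ along a Cameron-Martin direction $\bar h$ is an admissible $C^\alpha$-perturbation, and the directional derivative $Z_t:=D_{\bar h}Y_t$ exists, is continuous in $\omega$, and solves the linearised equation $Z_t=\int_0^t b'(Y_s)Z_s\,ds+\gamma\theta^\beta\bar h_t$. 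Since $b'\leqslant -l$, the variation-of-constants representation bounds $Z_t$ by a constant multiple of $\|\bar h\|_{\infty;T}$ uniformly in $\omega$; combined with the continuity in the direction, this furnishes the ray-differentiability with a square-integrable gradient that characterises membership in $\mathbb D^{1,2}$ (cf. \cite{NUALART06}), yielding $Y_t\in\mathbb D^{1,2}$ together with $\langle\mathbf DY_t,\bar h\rangle_{\mathcal H}=Z_t$.

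Granting this, I would apply $\mathbf D_r$ to both sides of (\ref{auxiliary_equation_y_stochastic}), using $\mathbf D_rW_t=\mathbf 1_{[0,t]}(r)$ and commuting $\mathbf D_r$ with the Lebesgue integral (licit since $b'$ is bounded along the range of $Y$ and the derivatives are square-integrable), to get $\mathbf D_rY_t=\int_0^t b'(Y_s)\mathbf D_rY_s\,ds+\gamma\theta^\beta\mathbf 1_{[0,t]}(r)$. Causality forces $\mathbf D_rY_s=0$ for $s<r$, so for $r\leqslant t$ the map $t\mapsto\mathbf D_rY_t$ solves the linear ODE $\tfrac{d}{dt}\mathbf D_rY_t=b'(Y_t)\mathbf D_rY_t$ with initial value $\mathbf D_rY_r=\gamma\theta^\beta$. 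Integrating gives $\mathbf D_rY_t=\gamma\theta^\beta\exp(\int_r^t b'(Y_u)\,du)$, and since $b'(Y_u)=(G\circ F^{-1})'(Y_u)=G'(X_u)/F'(X_u)$ with $X_u=F^{-1}(Y_u)$, this is exactly the announced formula.

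The hard part is the rigorous $\mathbb D^{1,2}$ step, because $b$ is only locally Lipschitz and blows up at the endpoints $0$ and $F(1)$, so the classical Malliavin theory for globally Lipschitz coefficients does not apply verbatim. If the ray-differentiability argument above is deemed too quick, I would instead localise: replace $b$ by a globally Lipschitz map $b_\varepsilon$ coinciding with it on $[\varepsilon,F(1)-\varepsilon]$, obtain $Y^\varepsilon_t\in\mathbb D^{1,2}$ with the analogous derivative from the standard additive-noise theory, and pass to the limit $\varepsilon\rightarrow 0$. The closure is controlled by Theorem \ref{existence_uniqueness_J}, which keeps the paths of $Y$ inside $]0,F(1)[$, and by the uniform exponential contraction of Corollary \ref{Ito_map_Lipschitz} together with $b'\leqslant -l$, which supply the bound $|\mathbf D_rY_t|\leqslant\gamma\theta^\beta\mathbf 1_{[0,t]}(r)e^{-l(t-r)}$ needed to identify the limit in $\mathbb D^{1,2}$.
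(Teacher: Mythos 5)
Your proposal is correct and follows essentially the same route as the paper: continuous differentiability of the It\^o map (Proposition \ref{continuous_differentiability}) together with the Cameron--Martin embedding gives $\mathcal H^1$-differentiability and hence $Y_t\in\mathbb D_{\textrm{loc}}^{1,2}$, the linearised equation for the directional derivatives is solved explicitly as a linear ODE, and the negativity of $G'/F'$ yields a deterministic bound on the derivative that upgrades local to genuine membership in $\mathbb D^{1,2}$. The only variation is the mechanism for bounding $\|\mathbf DY_t\|_{\mathcal H}$: you control the directional derivatives by $\|h\|_{\mathcal H}$ via the embedding $\mathcal H^1\hookrightarrow C^{\alpha}\hookrightarrow C^0$, whereas the paper applies Assumption \ref{Gaussian_assumption}.(2) to the pointwise estimate $0\leqslant\mathbf D_sY_t\leqslant\gamma\theta^{\beta}\mathbf 1_{[0,t]}(s)$ to obtain the two-sided bound $\|\mathbf DY_t\|_{\mathcal H}^{2}\leqslant(\gamma\theta^{\beta})^2R(t,t)$, which it reuses in the proof of Proposition \ref{expression_of_the_density}; both arguments are valid.
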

%

% Proof.

%
\begin{proof}
Since $W(\omega + h) = W(\omega) + h$ for every $(\omega,h)\in\Omega\times\mathcal H^1$ and $\mathcal H^1\hookrightarrow C^{\alpha}([0,T];\mathbb R)$ ($\alpha\in ]0,H[$) ; by Proposition \ref{continuous_differentiability}, $Y_s$ is continuously $\mathcal H^1$-differentiable for every $s\in ]0,t]$. Then, by Proposition \ref{link_H1_differentiability_Malliavin_derivability}, $Y_t\in\mathbb D_{\textrm{loc}}^{1,2}$. Moreover, for every $h\in\mathcal H^1$,
\begin{displaymath}
D_hY_t =
\gamma\theta^{\beta}h_t +
\int_{0}^{t}
\frac{G'\left(X_s\right)}{F'\left(X_s\right)}D_hY_sds.
\end{displaymath}
(cf. the remark following Proposition \ref{link_H1_differentiability_Malliavin_derivability} for the notation).
\\
\\
Let $(h^n,n\in\mathbb N)$ be an orthonormal basis of the reproducing kernel Hilbert \mbox{space $\mathcal H$ :}
\begin{eqnarray*}
 \mathbf D_.Y_t & = &
 \sum_{n\in\mathbb N}
 \langle\mathbf DY_t,h^n\rangle_{\mathcal H}h^n =
 \sum_{n\in\mathbb N}
 \left[D_{I(h^n)}Y_t\right]h^n\\
 & = &
 \sum_{n\in\mathbb N}
 \left[
 \gamma\theta^{\beta}I_t(h^n) +
 \int_{0}^{t}\frac{G'(X_s)}{F'(X_s)}D_{I(h^n)}Y_sds\right]h^n\\
 & = &
 \gamma\theta^{\beta}
 \sum_{n\in\mathbb N}\left[D_{I(h^n)}W_t\right]h^n +
 \int_{0}^{t}\frac{G'(X_s)}{F'(X_s)}
 \left[\sum_{n\in\mathbb N}\langle\mathbf DY_s,h^n\rangle_{\mathcal H}h^n\right]ds\\
 & = &
 \gamma\theta^{\beta}\mathbf D_.W_t +
 \int_{0}^{t}\frac{G'(X_s)}{F'(X_s)}\mathbf D_.Y_s ds.
\end{eqnarray*}
Since $\mathbf D_vY_.$ is the solution of a linear differential equation for every $v\in [0,T]$ :
\begin{displaymath}
\mathbf D_.Y_t =
\gamma\theta^{\beta}
\mathbf 1_{[0,t]}(.)
\exp\left[\int_{.}^{t}\frac{G'(X_u)}{F'(X_u)}du\right].
\end{displaymath}
So, since $G'(y)/F'(y) < 0$ for every $y\in ]0,1[$ (cf. Proposition \ref{properties_F}.(6)) :
\begin{equation}\label{estimate_Malliavin_1}
\gamma\theta^{\beta}\mathbf 1_{[0,t]}(s)
\exp\left[\int_{0}^{T}\frac{G'(X_u)}{F'(X_u)}du\right]
\leqslant
\mathbf D_sY_t
\leqslant
\gamma\theta^{\beta}\mathbf 1_{[0,t]}(s)
\end{equation}
for every $s\in [0,T]$. Put $\gamma_t :=\|\mathbf DY_t\|_{\mathcal H}^{2}$. By Assumption \ref{Gaussian_assumption} together with inequality (\ref{estimate_Malliavin_1}) :
\begin{equation}\label{estimate_Malliavin_2}
0 < (\gamma\theta^{\beta})^2R(t,t)
\exp\left[2\int_{0}^{T}\frac{G'(X_u)}{F'(X_u)}du\right]
\leqslant\gamma_t
\leqslant
(\gamma\theta^{\beta})^2R(t,t).
\end{equation}
By inequality (\ref{estimate_Malliavin_2}), $\gamma_t\in L^p(\Omega,\mathbb P)$ for every $p > 0$. So, $Y_t\in\mathbb D^{1,2}$ by Proposition \ref{link_H1_differentiability_Malliavin_derivability}.
\end{proof}
\noindent
Even if the pathwise properties of $Y$ are sufficient to show the existence of a density for $Y_t$ via Bouleau-Hirsch's criterion (cf. \cite{NUALART06}, Theorem 2.1.2), several probabilistic integrability properties are required in order to provide an expression of the density. If the random variable is derivable in Malliavin's sense and the inverse of the Malliavin matrix belongs to $L^p(\Omega;\mathbb P)$ for each $p\geqslant 1$ ; D. Nualart \cite{NUALART06}, Proposition 2.1.1 provides an explicit density. However, even if $Y_t\in\mathbb D^{1,2}$ by the previous lemma, it seems difficult to show that $1/\gamma_t$ belongs to $L^p(\Omega;\mathbb P)$ too ($\gamma_t :=\|\mathbf DY_t\|_{\mathcal H}^{2}$). I. Nourdin and F. Viens \cite{NV09}, Theorem 3.1 provides an expression of the density, in which the Ornstein-Uhlenbeck operator $L$ (cf. Definition \ref{OU_operator}) involves, but not the inverse of the Malliavin matrix and the divergence operator. The following proposition shows that $Y_t$ satisfies assumptions of \cite{NV09}, Theorem 3.1 :
%

% Proposition : Expression of the density.

%
\begin{proposition}\label{expression_of_the_density}
Under assumptions \ref{assumption_beta}, \ref{auxiliary_assumption_cst} and \ref{Gaussian_assumption}, the following function $f_t$ is a density of $Y_t$ with respect to Lebesgue's measure \mbox{on $(\mathbb R,\mathcal B(\mathbb R))$ :}
\begin{displaymath}
f_t(y) =
\frac{\mathbb E(|\hat Y_t|)}{2g_{Y_t}(y)}
\exp\left[-\int_{\mathbb E(Y_t)}^{y}
\frac{z -\mathbb E(Y_t)}{g_{Y_t}(z)}dz\right]
\end{displaymath}
where, $\hat Y_t := Y_t - \mathbb E(Y_t)$ and $g_{Y_t}(y) := \mathbb E(\langle\mathbf DY_t,-\mathbf DL^{-1}Y_t\rangle_{\mathcal H}|Y_t = y)$ for every $y\in ]0,F(1)[$.
\end{proposition}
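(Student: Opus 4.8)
The plan is to verify that the \emph{centered} variable $\hat Y_t := Y_t -\mathbb E(Y_t)$ fulfils the hypotheses of \cite{NV09}, Theorem 3.1, and then to translate the resulting density back through the shift $y\mapsto y -\mathbb E(Y_t)$. Recall that theorem: for a centered $Z\in\mathbb D^{1,2}$ such that $g_Z(Z) > 0$ almost surely, where
\begin{displaymath}
g_Z(z) :=\mathbb E\left(\langle\mathbf DZ,-\mathbf DL^{-1}Z\rangle_{\mathcal H}\mid Z = z\right),
\end{displaymath}
the law of $Z$ is absolutely continuous on the interior of its support with density $\rho_Z(z) = \mathbb E(|Z|)/(2g_Z(z))\cdot\exp(-\int_{0}^{z}u\,g_Z(u)^{-1}du)$. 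Since $\mathbf D$ and $L^{-1}$ annihilate additive constants, $\mathbf D\hat Y_t =\mathbf DY_t$ and $\mathbf DL^{-1}\hat Y_t =\mathbf DL^{-1}Y_t$, so $g_{\hat Y_t}(z) = g_{Y_t}(z +\mathbb E(Y_t))$; the substitution $z = y -\mathbb E(Y_t)$ in $\rho_{\hat Y_t}$ then reproduces exactly the announced $f_t$.

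First I would record the membership $\hat Y_t\in\mathbb D^{1,2}$, which is immediate from Lemma \ref{integrability_Malliavin_matrix} because centering does not affect Malliavin differentiability, and that lemma already furnishes $Y_t\in\mathbb D^{1,2}$ together with the two-sided estimate (\ref{estimate_Malliavin_1}) for $\mathbf D_sY_t$. The real content — and the main obstacle — is the strict positivity $g_{\hat Y_t}(\hat Y_t) > 0$ almost surely, equivalently $g_{Y_t} > 0$ on the range of $Y_t$. Here I would exploit the integral representation $-\mathbf D_sL^{-1}Y_t =\int_{0}^{\infty}e^{-\vartheta}P_{\vartheta}(\mathbf D_sY_t)\,d\vartheta$, where $P_{\vartheta}$ is the Ornstein--Uhlenbeck semigroup, so that Mehler's formula gives $P_{\vartheta}(\mathbf D_sY_t)(\omega) =\mathbb E'[\mathbf D_sY_t(e^{-\vartheta}\omega +\sqrt{1-e^{-2\vartheta}}\,\omega')]$ for an independent copy $\omega'$.

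By the explicit formula of Lemma \ref{integrability_Malliavin_matrix}, along every path the function $s\mapsto\mathbf D_sY_t$ carries the constant sign of $\gamma$ and, up to that sign, dominates pointwise the positive multiple $|\gamma|\theta^{\beta}\exp[\int_{0}^{T}(G'/F')(X_u)du]\,\mathbf 1_{[0,t]}$ of $\mathbf 1_{[0,t]}$, using $(G'/F') < 0$ from Proposition \ref{properties_F}.(6) together with $[s,t]\subseteq [0,T]$. The same bound, with the path replaced by its Mehler-shifted version, controls $P_{\vartheta}(\mathbf D_{\cdot}Y_t)$. Since $g_Z$ is invariant under $Z\mapsto -Z$, I may assume $\gamma > 0$ (the case $\gamma = 0$ being degenerate), so that both $\mathbf D_{\cdot}Y_t$ and $P_{\vartheta}(\mathbf D_{\cdot}Y_t)$ are nonnegative functions dominating positive multiples of $\mathbf 1_{[0,t]}$.

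Finally I would invoke Assumption \ref{Gaussian_assumption}.(2): applied with these pointwise lower bounds it yields $\langle\mathbf D_{\cdot}Y_t,P_{\vartheta}(\mathbf D_{\cdot}Y_t)\rangle_{\mathcal H}\geqslant c\,c_{\vartheta}\langle\mathbf 1_{[0,t]},\mathbf 1_{[0,t]}\rangle_{\mathcal H} = c\,c_{\vartheta}R(t,t)$ with random constants $c,c_{\vartheta} > 0$, which is strictly positive by Assumption \ref{Gaussian_assumption}.(1). Integrating against $e^{-\vartheta}d\vartheta$ and taking the conditional expectation given $Y_t = y$ preserves the strict inequality, whence $g_{Y_t} > 0$ and $g_{\hat Y_t}(\hat Y_t) > 0$ almost surely. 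With every hypothesis checked, \cite{NV09}, Theorem 3.1 applies to $\hat Y_t$ and, after the translation above, delivers the stated density $f_t$. The delicate point throughout is preserving the sign information of $\mathbf DY_t$ under the Mehler averaging, so that the monotonicity in Assumption \ref{Gaussian_assumption}.(2) can legitimately be applied to the shifted Malliavin derivatives as well.
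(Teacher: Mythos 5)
Your proof follows essentially the same route as the paper's: the Ornstein--Uhlenbeck representation $-\mathbf D_sL^{-1}Y_t=\int_0^\infty e^{-u}T_u(\mathbf D_sY_t)\,du$, positivity of the semigroup combined with the two-sided bound on $\mathbf D_sY_t$ from Lemma \ref{integrability_Malliavin_matrix}, Assumption \ref{Gaussian_assumption}.(1,2) to bound $\langle\mathbf DY_t,-\mathbf DL^{-1}Y_t\rangle_{\mathcal H}$ below by a positive multiple of $R(t,t)$, and then \cite{NV09}, Theorem 3.1 applied to the centered variable followed by a translation. Your explicit handling of the sign of $\gamma$ via Mehler's formula is a minor refinement that the paper leaves implicit, but it does not change the argument.
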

%

% Proof.

%
\begin{proof}
Let $s\in [0,T]$ be arbitrarily chosen. As shown in the proof of \cite{NV09}, Proposition 3.7 :
\begin{displaymath}
-\mathbf D_sL^{-1}Y_t =
\int_{0}^{\infty}
e^{-u}T_u(\mathbf D_sY_t)du.
\end{displaymath}
So, by inequality (\ref{estimate_Malliavin_1}) and the remark following Definition \ref{OU_operator} :
\begin{displaymath}
-\mathbf D_sL^{-1}Y_t
\geqslant
\gamma\theta^{\beta}\mathbf 1_{[0,t]}(s)
\int_{0}^{\infty}e^{-u}T_u\left[\exp\left[\int_{0}^{T}\frac{G'(X_v)}{F'(X_v)}dv\right]\right]du.
\end{displaymath}
Then, by Assumption \ref{Gaussian_assumption} together with inequality (\ref{estimate_Malliavin_1}) :
\begin{eqnarray*}
 \langle\mathbf DY_t,-\mathbf DL^{-1}Y_t\rangle_{\mathcal H}
 & \geqslant &
 (\gamma\theta^{\beta})^2R(t,t)
 \exp\left[\int_{0}^{T}\frac{G'(X_v)}{F'(X_v)}dv\right]\times\\
 & &
 \int_{0}^{\infty}e^{-u}T_u\left[\exp\left[\int_{0}^{T}\frac{G'(X_v)}{F'(X_v)}dv\right]\right]du > 0.
\end{eqnarray*}
So,
\begin{eqnarray*}
 g_{\hat Y_t}(\hat Y_t) & := &
 \mathbb E(\langle\mathbf D\hat Y_t,-\mathbf DL^{-1}\hat Y_t\rangle_{\mathcal H}|
 \hat Y_t)\\
 & = &
 \mathbb E(\langle\mathbf DY_t,-\mathbf DL^{-1}Y_t\rangle_{\mathcal H}|
 \hat Y_t) > 0.
\end{eqnarray*}
Therefore, by \cite{NV09}, Theorem 3.1 :
\begin{displaymath}
\mathbb P_{\hat Y_t}(dy) =
\frac{\mathbb E(|\hat Y_t|)}{2g_{\hat Y_t}(y)}
\exp\left[-\int_{0}^{y}\frac{z}{g_{\hat Y_t}(z)}dz\right]dy.
\end{displaymath}
Together with a straightforward application of the transfer theorem, that achieves the proof.
\end{proof}
%

% Corollary : Expression of the density for Jacobi's equation.

%
\begin{corollary}\label{density_Jacobi}
Under assumptions \ref{assumption_beta}, \ref{auxiliary_assumption_cst} and \ref{Gaussian_assumption}, for every $x\in ]0,1[$,
\begin{displaymath}
\frac{d\mathbb P_{X_t}}{dx}(x) =
\frac{F'(x)\mathbb E(|\hat X_t|)}{2g_{F(X_t)}[F(x)]}
\exp\left[-\int_{\mathbb E[F(X_t)]}^{F(x)}
\frac{z -\mathbb E[F(X_t)]}{g_{F(X_t)}(z)}dz\right]
\end{displaymath}
where, $\hat X_t := F(X_t) -\mathbb E[F(X_t)]$.
\end{corollary}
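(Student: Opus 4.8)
The plan is to obtain the density of $X_t$ directly from the density of $Y_t = F(X_t)$ already computed at Proposition~\ref{expression_of_the_density}, by means of a single change of variable. First I would recall that $F = F(1,\cdot)$ is continuously differentiable and strictly increasing on $]0,1[$, with $F'(x) = [x(1-x)]^{-\beta} > 0$ by Proposition~\ref{properties_F}.(1), hence a $C^1$-diffeomorphism from $]0,1[$ onto $]0,F(1)[$ whose inverse $F^{-1}$ is continuously differentiable on $[0,F(1)]$. Since $X_t$ is $]0,1[$-valued by Theorem~\ref{existence_uniqueness_J} and $Y_t = F(X_t)$, equivalently $X_t = F^{-1}(Y_t)$, the map $x\mapsto F(x)$ transports the law of $X_t$ onto the law of $Y_t$.

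Next, for any bounded Borel test function $\phi$, I would write $\mathbb E[\phi(X_t)] = \mathbb E[\phi(F^{-1}(Y_t))] = \int_{]0,F(1)[}\phi(F^{-1}(y))f_t(y)\,dy$, using the density $f_t$ provided by Proposition~\ref{expression_of_the_density}, then perform the substitution $y = F(x)$, $dy = F'(x)\,dx$. This yields $\mathbb E[\phi(X_t)] = \int_{]0,1[}\phi(x)\,f_t(F(x))\,F'(x)\,dx$, and by identification of Lebesgue densities one obtains $\frac{d\mathbb P_{X_t}}{dx}(x) = F'(x)\,f_t(F(x))$ for every $x\in ]0,1[$. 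This is exactly the transfer theorem invoked at the close of the proof of Proposition~\ref{expression_of_the_density}, now applied in the reverse direction.

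It then remains to substitute $y = F(x)$ into the explicit expression of $f_t$ and to rewrite the quantities attached to $Y_t$ in terms of $X_t$. Because $Y_t = F(X_t)$, one has $g_{Y_t} = g_{F(X_t)}$, while $\hat Y_t = Y_t - \mathbb E(Y_t) = F(X_t) - \mathbb E[F(X_t)] = \hat X_t$, so that $\mathbb E(|\hat Y_t|) = \mathbb E(|\hat X_t|)$ and $\mathbb E(Y_t) = \mathbb E[F(X_t)]$. Replacing the lower integration bound $\mathbb E(Y_t)$ and the upper bound $y$ of the inner integral by $\mathbb E[F(X_t)]$ and $F(x)$ respectively, and multiplying by the Jacobian factor $F'(x)$, I recover precisely the announced formula. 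I expect no genuine obstacle here: the computation is a routine change of variable, and the only point requiring care, namely the $C^1$-regularity of $F$ (and of $F^{-1}$) that licenses the substitution, is already guaranteed by Proposition~\ref{properties_F} and used repeatedly throughout Section~2.
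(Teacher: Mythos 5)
Your proposal is correct and follows essentially the same route as the paper: the paper's proof is exactly a one-line invocation of the transfer theorem, giving $\frac{d\mathbb P_{X_t}}{dx}(x) = f_t[F(x)]F'(x)$, combined with the explicit expression of $f_t$ from Proposition \ref{expression_of_the_density}. You have merely spelled out the change-of-variable computation and the identifications $\hat Y_t = \hat X_t$, $g_{Y_t} = g_{F(X_t)}$ that the paper leaves implicit.
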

%

% Proof.

%
\begin{proof}
Straightforward application of the transfer theorem which gives
\begin{displaymath}
\frac{d\mathbb P_{X_t}}{dx}(x) = f_t[F(x)]F'(x)
\textrm{ ; }
x\in ]0,1[,
\end{displaymath}
and of Proposition \ref{expression_of_the_density}.
\end{proof}
%

% Section : A generalized Morris-Lecar neuron model.

%
\section{A generalized Morris-Lecar neuron model}
\noindent
Let $X_t$ be a neuron's proportion of opened ion channels at time $t\geqslant 0$. In It\^o's calculus framework, as in Morris-Lecar's neuron model studied by S. Ditlevsen and P. Greenwood \cite{DG12}, if $X := (X_t,t\geqslant 0)$ is the solution of a Jacobi equation driven by a standard Brownian motion, it has $\alpha_0$-H\"older continuous paths on the compact intervals of $\mathbb R_+$ with $\alpha_0 < 1/2$. In particular, at time $t\geqslant 0$, there exists $h_0\in ]0,1[$ such that :
\begin{displaymath}
\forall h\in [0,h_0]
\textrm{, }
\left|X_{t + h} - X_t\right|
\leqslant
C_{t,h_0}h^{\alpha_0}
\leqslant 1.
\end{displaymath}
In other words, the increment of the proportion of opened ion channels between the times $t$ and $t + h$ is controlled by $\omega_{\alpha_0}(h) := C_{t,h_0}h^{\alpha_0}$.
\\
\\
Because of their various functions in the nervous system, there is an important morphological variability between neurons classes that implies an important variability of the number of ion channels, potentially opened, between two neurons belonging to different classes. For instance, Purkinje cells of the cerebellum receive $10^5$ synaptic inputs, and cortical pyramidal cells receive only 100 synaptic inputs (cf. L.F. Abbott and P. Dayan \cite{DA00} p. 2-3). Therefore, to control the increment of $X$ between $t$ and $t + h$ by $\omega_{\alpha_0}(h)$ could be too large for neurons having high total number of ion channels, and too small in the opposite case. For instance, in that second case ; on a very small time interval $[t,t + h]$, the increment of $X$ could be contained in $]\omega_{\alpha_0}(h),1[$. Then, one should replace $\omega_{\alpha_0}(h)$ by $\omega_{\alpha}(h) := \hat C_{t,h_0}h^{\alpha}$ with $\alpha <\alpha_0$. It means to assume that the process $X$ has $\alpha$-H\"older (but not $\alpha_0$-H\"older) continuous paths on the compact intervals of $\mathbb R_+$. For instance, the pathwise generalization of the Jacobi equation studied throughout this paper with a fractional Brownian signal $B^H$ of Hurst parameter $H\in ]0,1[$ works (with $\beta\in ]1 -\alpha,1[$ and $\alpha < H$).
\\
\\
On the deterministic Morris-Lecar model, the reader can refer to C. Morris and H. Lecar \cite{ML81}. The random dynamic of Morris-Lecar's model has been studied in T. Tateno and K. Pakdaman \cite{TP04}. On the stochastic Morris-Lecar model taken in the sense of It\^o for a standard Brownian signal and $\beta := 1/2$, please refer to S. Ditlevsen and P. Greenwood \cite{DG12}.
\\
\\
Let us now define a generalized Morris-Lecar neuron model :
\\
\\
Consider $V_t$ the membrane potential of the neuron and $X_t$ the normalized conductance of the $\textrm{K}^+$ current  (i.e. the probability that a $\textrm{K}^{+}$ ion channel is open) at time $t\geqslant 0$, and assume they satisfy the following equations in rough paths sense :
\begin{eqnarray}
 \label{Morris_Lecar_1}
 V_t & = & v_0 + \int_{0}^{t} b_V\left(V_s,X_s\right)ds
 \textrm{ and}\\
 \label{Morris_Lecar_2}
 X_t & = & x_0 + \int_{0}^{t} b_X\left(V_s,X_s\right)ds +
 \int_{0}^{t}\sigma_X\left(V_s,X_s\right)dB_{s}^{H}
\end{eqnarray}
where, $(v_0,x_0)\in I\times ]0,1[$ is a deterministic initial condition with $I := [-70\textrm{mV},30\textrm{mV}]$,
\begin{eqnarray*}
 b_V(v,x) & := & -\textrm{C}^{-1}\left[g_{\textrm{Ca}}m_{\infty}\left(V_t\right)\left(V_t - V_{\textrm{Ca}}\right) +
 g_{\textrm{K}}X_t\left(V_t - V_{\textrm{K}}\right) + g_{\textrm{L}}\left(V_t - V_{\textrm{L}}\right) - \textrm{I}\right],\\
 b_X(v,x) & := & a(v)(1 - x) - b(v)x,\\
 \sigma_X(v,x) & := & \sigma^*\left[2h(v)x(1-x)\right]^{\beta},\\
 m_{\infty}(v) & := & 1/2\left[1 + \tanh\left[\left(v - V_1\right)/V_2\right]\right],\\
 a(v) & := & 1/2\phi\cosh\left[\left(v - V_3\right)/\left(2V_4\right)\right]\left[1 + \tanh\left[\left(v - V_3\right)/V_4\right]\right],\\
 b(v) & := & 1/2\phi\cosh\left[\left(v - V_3\right)/\left(2V_4\right)\right]\left[1 - \tanh\left[\left(v - V_3\right)/V_4\right]\right],\\
 h(v) & := & a(v)b(v)/\left[a(v) + b(v)\right],
\end{eqnarray*}
$V_1$, $V_2$, $V_3$ and $V_4$ are scaling parameters, $g_{\textrm{Ca}}$ and $g_{\textrm{K}}$ are the maximal conductances associated to $\textrm{Ca}^{2+}$ and $\textrm{K}^+$, $g_{\textrm{L}}$ is the conductance associated to the leak current, $V_{\textrm{Ca}}$, $V_{\textrm{K}}$ and $V_{\textrm{L}}$ are the reversal potentials of $\textrm{Ca}^{2+}$, $\textrm{K}^+$ and leak currents respectively, $\textrm{C}$ is the membrane capacitance, $\phi$ is a rate scaling parameter, $\textrm{I}$ is the input current, $\sigma^*\in ]0,1[$ and $\beta$ satisfies Assumption \ref{assumption_beta}.
\\
\\
Equation (\ref{Morris_Lecar_2}) can be rewritten as a generalized Jacobi equation :
\begin{displaymath}
X_t =
x_0 -
\int_{0}^{t}\theta_s\left(X_s - \mu_s\right)ds +
\int_{0}^{t}\gamma_s\left[2\theta_sX_s\left(1 - X_s\right)\right]^{\beta}dB_{s}^{H}
\textrm{ ; }
t\geqslant 0
\end{displaymath}
where,
\begin{displaymath}
\mu_. :=
\frac{a\left(V_.\right)}{a\left(V_.\right) + b\left(V_.\right)}
\textrm{, }
\theta_. := a\left(V_.\right) + b\left(V_.\right)
\textrm{ and }
\gamma_{.}^{1/\beta} := (\sigma^*)^{1/\beta}
\frac{a\left(V_.\right)b\left(V_.\right)}{\left[a\left(V_.\right) + b\left(V_.\right)\right]^2}
\end{displaymath}
satisfy Assumption \ref{auxiliary_assumption}.
\\
\\
By Corollary \ref{existence_uniqueness_J_R_+}, the system (\ref{Morris_Lecar_1})-(\ref{Morris_Lecar_2}) admits a unique bounded solution.
%

% Appendix A : Probabilistic preliminaries.

%
\appendix
\section{Probabilistic preliminaries}
\noindent
This appendix is devoted to state some results and notations, used throughout the paper, on random dynamical systems (cf. L. Arnold \cite{ARNOLD98}) and Malliavin calculus (cf. D. Nualart \cite{NUALART06}).
%

% Subsection : Random dynamical systems.

%
\subsection{Random dynamical systems}
Inspired by L. Arnold \cite{ARNOLD98}, this subsection provides some definitions and results on random dynamical systems.
%

% Definition : Dynamical systems.

%
\begin{definition}\label{DS}
A family $\vartheta := (\theta_t,t\in\mathbb R)$ of maps from a measurable space $(\Omega,\mathcal A)$ into itself is a (measurable) dynamical system (DS) if and only if :
\begin{enumerate}
 \item $(t,\omega)\mapsto\theta_t\omega$ is $\mathcal B(\mathbb R)\otimes\mathcal A,\mathcal A$-measurable.
 \item $\theta_0 = \textrm{Id}_{\Omega}$.
 \item For every $s,t\in\mathbb R$, $\theta_{s + t} = \theta_s\circ\theta_t$ ((semi-)flow property).
\end{enumerate}
A measure $\mu$ on $(\Omega,\mathcal A)$ is $\vartheta$-invariant if and only if $\theta_t\mu = \mu$ for every $t\in\mathbb R$, where $\theta_t\mu := \mu(\theta_t\in .)$.
\end{definition}
%

% Definition : Invariant mod $\mathbb P$ sets.

%
\begin{definition}\label{invariant_mod_P_sets}
Consider a probability space $(\Omega,\mathcal A,\mathbb P)$ and a DS $\vartheta$ on $(\Omega,\mathcal A)$. A set $A\in\mathcal A$ is invariant $\mod\mathbb P$ with respect to $\vartheta$ if and only if,
\begin{displaymath}
\forall
t\in\mathbb R
\textrm{$,$ }
\mathbb P\left(A\Delta\{\theta_t\in A\}\right) = 0
\end{displaymath}
where, $E\Delta F := \left(E^c\cap F\right)\cup\left(E\cap F^c\right)\textrm{ $;$ }\forall E,F\in\mathcal A$.
\end{definition}
\noindent
\textbf{Notations :}
\begin{enumerate}
 \item $I_{\mathbb P}$ is the $\sigma$-algebra of sets invariant $\mod\mathbb P$.
 \item In the sequel, $\mathbb E$ denotes the expectation for the probability measure $\mathbb P$.
\end{enumerate}
%

% Theorem : Ergodic theorem.

%
\begin{theorem}\label{ergodic_theorem}
(Birkhoff-Chintchin) Consider a probability space $(\Omega,\mathcal A,\mathbb P)$ and a DS $\vartheta$ on $(\Omega,\mathcal A)$. If $\mathbb P$ is $\vartheta$-invariant, for every $f\in L^1(\Omega;\mathbb P)$,
\begin{displaymath}
\lim_{t\rightarrow\infty}
\frac{1}{t}
\int_{0}^{t}
f(\theta_s .)ds =
\mathbb E(f | I_{\mathbb P})
\textrm{ $\mathbb P$-a.s.}
\end{displaymath}
\end{theorem}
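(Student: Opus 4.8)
The plan is to reduce the continuous-time statement to the classical discrete-time Birkhoff--Khinchin theorem for the single measure-preserving transformation $T := \theta_1$, and then to interpolate between integer times. First I would introduce the subordinated function
\begin{displaymath}
g(\omega) := \int_{0}^{1} f(\theta_s\omega)\,ds,
\end{displaymath}
which belongs to $L^1(\Omega;\mathbb P)$ by Fubini's theorem together with the $\vartheta$-invariance of $\mathbb P$. The flow property $\theta_{s+t}=\theta_s\circ\theta_t$ yields $g(\theta_k\omega)=\int_{k}^{k+1}f(\theta_s\omega)\,ds$, so that
\begin{displaymath}
\sum_{k=0}^{n-1} g(\theta_k\omega) = \int_{0}^{n} f(\theta_s\omega)\,ds.
\end{displaymath}
Thus the discrete ergodic averages of $g$ under $T$ coincide, up to the factor $1/n$, with the continuous Ces\`aro averages of $f$ sampled at integer times.

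The core analytic step is the discrete Birkhoff--Khinchin theorem for $g$ and $T$. I would establish it via Hopf's maximal ergodic inequality: writing $S_n := \sum_{k=0}^{n-1} g\circ T^k$ and $E := \{\sup_n S_n > 0\}$, one has $\int_E g\,d\mathbb P \geqslant 0$, and the analogous inequality applied to $g-\alpha$ and $\beta-g$ shows that for all rationals $\alpha>\beta$ the event $\{\liminf_n n^{-1}S_n < \beta < \alpha < \limsup_n n^{-1}S_n\}$ has $\mathbb P$-measure zero. Consequently $n^{-1}S_n$ converges $\mathbb P$-a.s., and an $L^1$-limit identification gives that the limit is $\mathbb E(g\mid\mathcal I_T)$, where $\mathcal I_T$ denotes the $T$-invariant $\sigma$-algebra.

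Next comes the interpolation. For $t\in[n,n+1)$ I would split
\begin{displaymath}
\frac{1}{t}\int_{0}^{t} f(\theta_s\omega)\,ds =
\frac{n}{t}\cdot\frac{1}{n}\int_{0}^{n} f(\theta_s\omega)\,ds +
\frac{1}{t}\int_{n}^{t} f(\theta_s\omega)\,ds,
\end{displaymath}
noting $n/t\to 1$. The remainder is dominated by $n^{-1}\int_{n}^{n+1}|f(\theta_s\omega)|\,ds = n^{-1}\,g_{|f|}(\theta_n\omega)$, where $g_{|f|}(\omega):=\int_0^1|f(\theta_s\omega)|\,ds$. Applying the previous averaging result to $g_{|f|}$ shows that $n^{-1}\sum_{k=0}^{n-1}g_{|f|}(\theta_k\omega)$ converges, and the general term of a convergent Ces\`aro average divided by $n$ tends to $0$; hence $n^{-1}g_{|f|}(\theta_n\omega)\to 0$ $\mathbb P$-a.s., so the remainder vanishes and the full continuous average converges to the same limit $\ell(\omega)$.

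Finally I would identify $\ell$ with $\mathbb E(f\mid I_{\mathbb P})$. The limit $\ell$ is invariant under every $\theta_r$, because shifting the window $[0,t]$ to $[r,r+t]$ alters the integral only by two boundary contributions over intervals of length $r$, which vanish after division by $t$; thus $\ell$ is $I_{\mathbb P}$-measurable. Integrating over an arbitrary $A\in I_{\mathbb P}$, using the invariance of $\mathbb P$ and the bounded/dominated convergence permitted by the maximal function, yields $\int_A \ell\,d\mathbb P = \int_A f\,d\mathbb P$, which characterizes $\ell$ as $\mathbb E(f\mid I_{\mathbb P})$. The hard part will be the maximal ergodic inequality underpinning the discrete convergence, together with the care needed to upgrade the $T$-invariant limit to one measurable with respect to the strictly smaller flow-invariant $\sigma$-algebra $I_{\mathbb P}$.
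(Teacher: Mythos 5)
The paper does not actually prove this statement: it appears in Appendix A.1 as a classical preliminary, stated in the form needed for Theorem \ref{Jacobi_ergodic_theorem} and implicitly referred to L. Arnold's monograph, so there is no in-paper argument to compare yours against. Your reduction is the standard and correct one: pass to the subordinated function $g(\omega)=\int_0^1 f(\theta_s\omega)\,ds$, apply the discrete Birkhoff--Khinchin theorem to $g$ and $T=\theta_1$ via the maximal ergodic inequality, interpolate between integer times using that the general term of a convergent Ces\`aro average is $o(n)$, and upgrade $T$-invariance of the limit to flow-invariance by the window-shifting argument. All the genuinely delicate points are addressed, in particular the distinction between $\mathcal I_T$ and the smaller $\sigma$-algebra $I_{\mathbb P}$, and the a.s.\ vanishing of $n^{-1}g_{|f|}(\theta_n\omega)$. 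One phrase deserves tightening: in the final identification $\int_A\ell\,d\mathbb P=\int_A f\,d\mathbb P$ you invoke ``dominated convergence permitted by the maximal function,'' but the ergodic maximal function of an $L^1$ function is only in weak $L^1$, so it cannot serve as an integrable dominant. The clean route, which you already have in hand, is to use $\ell=\mathbb E(g\mid\mathcal I_T)$ from the $L^1$-identification in the discrete step, so that for $A\in I_{\mathbb P}\subset\mathcal I_T$ one gets $\int_A\ell\,d\mathbb P=\int_A g\,d\mathbb P=\int_0^1\int_A f(\theta_s\omega)\,\mathbb P(d\omega)\,ds=\int_A f\,d\mathbb P$ by Fubini, the $\vartheta$-invariance of $\mathbb P$ and the invariance of $A$; no limit--integral interchange is needed. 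With that substitution the proof is complete.
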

\noindent
\textbf{Remark.} The metric dynamical system (metric DS) $(\Omega,\mathcal A,\mathbb P,\vartheta)$ is ergodic if and only if every sets belonging to $I_{\mathbb P}$ have probability $0$ or $1$. In that case, with notations of Theorem \ref{ergodic_theorem}, $\mathbb E(f|I_{\mathbb P}) = \mathbb E(f)$.
%

% Definition : Random dynamical systems.

%
\begin{definition}\label{RDS}
Consider a metric space $X$ and a metric DS $(\Omega,\mathcal A,\mathbb P,\vartheta)$. A random dynamical system (RDS) on the measurable space $(X,\mathcal B(X))$ over the metric DS $(\Omega,\mathcal A,\mathbb P,\vartheta)$ is a map $\varphi :\mathbb R_+\times\Omega\times X\rightarrow X$ such that :
\begin{enumerate}
 \item $\varphi$ is $\mathcal B(\mathbb R_+)\otimes\mathcal A\otimes\mathcal B(X),\mathcal B(X)$-measurable (measurability).
 \item For every $s,t\in\mathbb R_+$ and $\omega\in\Omega$,
 \begin{enumerate}
  \item $\varphi(0,\omega) = \textrm{Id}_X$,
  \item $\varphi(s + t,\omega) = \varphi(t,\theta_s\omega)\circ\varphi(s,\omega)$
 \end{enumerate}
 (cocycle property).
\end{enumerate}
If for every $\omega\in\Omega$, $(t,x)\mapsto\varphi(t,\omega)x$ is continuous from $\mathbb R_+\times X$ into $X$, then $\varphi$ is a continuous random dynamical system.
\end{definition}
%

% Proposition : Skew product

%
\begin{proposition}\label{skew_product}
Consider a metric space $X$, a metric DS $(\Omega,\mathcal A,\mathbb P,\vartheta)$, a random dynamical system $\varphi$ on the measurable space $(X,\mathcal B(X))$ over the metric DS $(\Omega,\mathcal A,\mathbb P,\vartheta)$, and the family $\Theta := (\Theta_t,t\in\mathbb R_+)$ of maps from $\Omega\times X$ into itself such that :
\begin{displaymath}
\Theta_t(\omega,x) :=
(\theta_t\omega,\varphi(t,\omega)x)
\textrm{ $;$ }
\forall t\in\mathbb R_+
\textrm{$,$ }
\forall\omega\in\Omega
\textrm{$,$ }
\forall x\in X.
\end{displaymath}
Then $\Theta$ defines a dynamical system, called skew product of the metric DS $(\Omega,\mathcal A,\mathbb P,\vartheta)$ and the RDS $\varphi$ on $X$.
\end{proposition}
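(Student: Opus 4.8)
The plan is to check, one by one, the three defining axioms of a measurable dynamical system (Definition \ref{DS}) for the family $\Theta=(\Theta_t,t\in\mathbb R_+)$ acting on the product measurable space $(\Omega\times X,\mathcal A\otimes\mathcal B(X))$, using only the properties already available for the metric DS $\vartheta$ and the RDS $\varphi$. Since $\varphi$ is indexed by $\mathbb R_+$, the flow property to be verified is understood as the one-sided (semi-flow) version for $s,t\in\mathbb R_+$.

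First I would settle measurability (Definition \ref{DS}.(1)). Because $\Theta_t(\omega,x)$ takes values in the product $\Omega\times X$, it suffices to check that each of its two coordinates is jointly measurable in $(t,\omega,x)$. The first coordinate $(t,\omega,x)\mapsto\theta_t\omega$ does not depend on $x$ and is $\mathcal B(\mathbb R_+)\otimes\mathcal A\otimes\mathcal B(X),\mathcal A$-measurable by Definition \ref{DS}.(1) applied to $\vartheta$; the second coordinate $(t,\omega,x)\mapsto\varphi(t,\omega)x$ is measurable by Definition \ref{RDS}.(1). Joint measurability into the product then follows. The identity axiom (Definition \ref{DS}.(2)) is immediate: $\Theta_0(\omega,x)=(\theta_0\omega,\varphi(0,\omega)x)=(\omega,x)$, using $\theta_0=\textrm{Id}_\Omega$ and the cocycle identity $\varphi(0,\omega)=\textrm{Id}_X$ from Definition \ref{RDS}.(2)(a).

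The only computation is the semi-flow property (Definition \ref{DS}.(3)). For $s,t\in\mathbb R_+$, expanding the composition gives
\begin{displaymath}
(\Theta_s\circ\Theta_t)(\omega,x)=\Theta_s(\theta_t\omega,\varphi(t,\omega)x)=\bigl(\theta_s\theta_t\omega,\;\varphi(s,\theta_t\omega)\varphi(t,\omega)x\bigr).
\end{displaymath}
The first coordinate equals $\theta_{s+t}\omega$ by the flow property of $\vartheta$. For the second coordinate, applying the cocycle property of Definition \ref{RDS}.(2)(b) with the roles of $s$ and $t$ interchanged yields $\varphi(s,\theta_t\omega)\circ\varphi(t,\omega)=\varphi(t+s,\omega)=\varphi(s+t,\omega)$, so the second coordinate equals $\varphi(s+t,\omega)x$. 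Hence $(\Theta_s\circ\Theta_t)(\omega,x)=\Theta_{s+t}(\omega,x)$, which is exactly the required identity.

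I do not expect any genuine obstacle here: the statement is a structural consequence of the axioms, and the single point demanding care is matching the shifted base point $\theta_t\omega$ appearing in $\varphi(s,\theta_t\omega)$ with the correct instance of the cocycle relation, i.e. keeping track of which argument of $\varphi$ is shifted by $\vartheta$.
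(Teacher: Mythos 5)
Your verification is correct, and it is the standard one: the paper itself states this proposition without proof (it is quoted from Arnold's monograph, where the skew product construction is Theorem 1.1.6), so there is no authorial argument to diverge from. Your coordinate-wise check of measurability, the identity at $t=0$, and the semi-flow identity via $\varphi(s,\theta_t\omega)\circ\varphi(t,\omega)=\varphi(s+t,\omega)$ is exactly what is needed, and you are right to flag that Definition \ref{DS} is stated for two-sided time while $\Theta$ is only a one-sided (semi-)flow indexed by $\mathbb R_+$ --- a mismatch the paper glosses over by calling axiom (3) the ``(semi-)flow property.''
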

\noindent
\textbf{Notation.} Consider a metric space $X$, a metric DS $(\Omega,\mathcal A,\mathbb P,\vartheta)$ and the map $p_{\Omega} :\Omega\times X\rightarrow\Omega$ defined by :
\begin{displaymath}
p_{\Omega}(\omega,x) :=\omega
\textrm{ $;$ }\forall (\omega,x)\in\Omega\times X.
\end{displaymath}
For every probability measure $\mu$ on $(\Omega\times X,\mathcal A\otimes\mathcal B(X))$, $p_{\Omega}\mu :=\mu(p_{\Omega}\in .)$.
\\
\\
With the notations of Proposition \ref{skew_product}, since $\Theta$ is a dynamical system depending on the metric DS $(\Omega,\mathcal A,\mathbb P,\vartheta)$ which is "given" to us from outside and cannot be manipulated in L. Arnold's philosophy, and on the RDS $\varphi$, it is natural to assume that a $\varphi$-invariant probability measure is a $\Theta$-invariant measure such that its marginal $p_{\Omega}\mu$ on $(\Omega,\mathcal A)$ coincides with $\mathbb P$.
%

% Definition : $\varphi$-invariant measures.

%
\begin{definition}\label{invariant_measures_RDS}
Consider a metric space $X$, a metric DS $(\Omega,\mathcal A,\mathbb P,\vartheta)$, a random dynamical system $\varphi$ on the measurable space $(X,\mathcal B(X))$ over the metric DS $(\Omega,\mathcal A,\mathbb P,\vartheta)$, and $\Theta$ the skew product of the metric DS $(\Omega,\mathcal A,\mathbb P,\vartheta)$ and the RDS $\varphi$ on $X$. A probability measure $\mu$ on $(\Omega\times X,\mathcal A\otimes\mathcal B(X))$ is $\varphi$-invariant if and only if $\mu$ satisfies the two following properties :
\begin{enumerate}
 \item $\mu$ is $\Theta$-invariant.
 \item $p_{\Omega}\mu =\mathbb P$.
\end{enumerate}
\end{definition}
\noindent
\textbf{Notations :}
\begin{itemize}
 \item $\mathcal P_{\mathbb P}(\Omega\times X) :=\{\mu\textrm{ probability measure on }(\Omega\times X,\mathcal A\otimes\mathcal B(X)) : p_{\Omega}\mu =\mathbb P\}$.
 \item $\mathcal I_{\mathbb P}(\varphi)$ denotes the set of $\varphi$-invariant probability measures.
\end{itemize}
%

% Proposition : Factorization.

%
\begin{proposition}\label{factorization}
Consider a metric space $X$, a metric DS $(\Omega,\mathcal A,\mathbb P,\vartheta)$, a random dynamical system $\varphi$ on the measurable space $(X,\mathcal B(X))$ over the metric DS $(\Omega,\mathcal A,\mathbb P,\vartheta)$, and $\Theta$ the skew product of the metric DS $(\Omega,\mathcal A,\mathbb P,\vartheta)$ and the RDS $\varphi$ on $X$. For each $\mu\in\mathcal P_{\mathbb P}(\Omega\times X)$, there exists a unique family $(\mu_{\omega},\omega\in\Omega)$ of measures on $(X,\mathcal B(X))$ such that
\begin{displaymath}
\mu(d\omega,dx) =
\mu_{\omega}(dx)\mathbb P(d\omega)
\textrm{ $\mathbb P$-a.s.}
\end{displaymath}
where,
\begin{enumerate}
 \item For every $B\in\mathcal B(X)$, $\omega\in\Omega\mapsto\mu_{\omega}(B)$ is $\mathcal A$-measurable.
 \item For $\mathbb P$-almost every $\omega\in\Omega$, $\mu_{\omega}$ is a probability measure on $(X,\mathcal B(X))$.
\end{enumerate}
That family is the factorization of $\mu$ with respect to $\mathbb P$.
\end{proposition}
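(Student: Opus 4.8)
The plan is to recognize Proposition \ref{factorization} as the existence and uniqueness of a regular conditional distribution, that is, the disintegration of $\mu$ along the projection $p_{\Omega}$. Since $p_{\Omega}\mu = \mathbb P$ by hypothesis, the sought family $(\mu_{\omega},\omega\in\Omega)$ is exactly the regular conditional law of the $X$-coordinate of $\mu$ given $p_{\Omega} = \omega$. The construction is available because the state space $X$ carries a Polish (standard Borel) structure, as it does in every application in the paper, where $X = {]0,F(1)[}$ and $\Omega = C^0(\mathbb R_+;\mathbb R)$ are separable and completely metrizable.

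First I would, for each Borel set $B\subseteq X$, introduce $\mu_{.}(B)$ as a version of the conditional expectation under $\mu$ of $\mathbf 1_{\Omega\times B}$ with respect to the sub-$\sigma$-algebra $p_{\Omega}^{-1}(\mathcal A)$, regarded as a function of $\omega\in\Omega$; this is well defined $\mathbb P$-a.s. precisely because the marginal of $\mu$ on $\Omega$ is $\mathbb P$. This immediately yields the $\mathcal A$-measurability asserted in point~(1) and the factorization identity $\mu(A\times B) = \int_{A}\mu_{\omega}(B)\,\mathbb P(d\omega)$ for every $A\in\mathcal A$.

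The heart of the argument, and the main obstacle, is to upgrade this collection of individually-a.s.-defined conditional expectations into a single kernel $\omega\mapsto\mu_{\omega}$ whose values are \emph{bona fide} probability measures. To do so I would fix a countable algebra (or $\pi$-system) $\mathcal C$ generating $\mathcal B(X)$, which exists since $X$ is separable, and select versions $\mu_{.}(C)$ for $C\in\mathcal C$. On the intersection of the countably many conull sets one enforces finite additivity, monotonicity, $\mu_{\omega}(X)=1$ and continuity at $\emptyset$; this last property, needed for countable additivity, is exactly where the Polish structure of $X$ (tightness and inner regularity by compacts) is used. A Carath\'eodory or monotone-class extension then produces a probability measure $\mu_{\omega}$ on $\mathcal B(X)$ for $\mathbb P$-almost every $\omega$ (setting $\mu_{\omega}$ equal to a fixed arbitrary probability measure off the conull set), which gives point~(2); a further monotone-class argument extends the factorization identity from $\mathcal C$ to all of $\mathcal B(X)$, both sides being measures in $B$ that agree on the generating $\pi$-system.

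For uniqueness, suppose $(\mu_{\omega})$ and $(\mu_{\omega}')$ both satisfy the conclusion. For each $C\in\mathcal C$ both $\mu_{.}(C)$ and $\mu_{.}'(C)$ are versions of the same conditional expectation, hence agree $\mathbb P$-a.s.; combining the countably many null sets over $C\in\mathcal C$ yields a single conull set on which $\mu_{\omega}$ and $\mu_{\omega}'$ coincide on $\mathcal C$, whence $\mu_{\omega}=\mu_{\omega}'$ for $\mathbb P$-almost every $\omega$ by the uniqueness of measures determined on a generating $\pi$-system. This establishes that the factorization is $\mathbb P$-a.s. unique and completes the proof.
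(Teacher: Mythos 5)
The paper does not actually prove this proposition: it states it and defers to L. Arnold, \emph{Random Dynamical Systems}, Proposition 1.4.3. Your argument is a correct reconstruction of the standard proof that underlies that reference, namely the existence and ($\mathbb P$-a.s.) uniqueness of a regular conditional distribution for the $X$-coordinate given the $\Omega$-coordinate: versions of conditional expectations on a countable generating algebra, repair of finite additivity, normalization and countable additivity on a single conull set, Carath\'eodory extension, and a $\pi$-system argument for both the extension of the factorization identity and the uniqueness. All the essential steps are present and in the right order. One point you handle better than the paper itself: as printed, the proposition assumes only that $X$ is a metric space, which is not enough for the disintegration theorem in general; Arnold's Proposition 1.4.3 assumes $X$ Polish, and that hypothesis is exactly what you invoke (tightness and inner regularity by compacts to get countable additivity of the limiting set function). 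Since every space to which the proposition is applied in the paper ($X = {]0,F(1)[}$, its closure, $\Omega = C^0(\mathbb R_+;\mathbb R)$) is Polish, this is an imprecision in the statement rather than in your proof, but it is worth noting that your argument genuinely needs that extra structure and would not go through for an arbitrary metric space.
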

\noindent
For a proof, please refer to \cite{ARNOLD98}, Proposition 1.4.3.
%

% Theorem : Invariant measures for compact metric state space.

%
\begin{theorem}\label{invariant_measure_compact}
Consider a compact metric space $X$, a metric DS $(\Omega,\mathcal A,\mathbb P,\vartheta)$, a continuous random dynamical system $\varphi$ on the measurable space $(X,\mathcal B(X))$ over the metric DS $(\Omega,\mathcal A,\mathbb P,\vartheta)$, and $\Theta$ the skew product of the metric DS $(\Omega,\mathcal A,\mathbb P,\vartheta)$ and the RDS $\varphi$ on $X$. Then, the convex compact set of $\varphi$-invariant probability measures $\mathcal I_{\mathbb P}(\varphi)$ is non-void.
\end{theorem}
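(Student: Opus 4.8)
The plan is to run the Krylov--Bogolyubov averaging procedure inside the fibre set $\mathcal P_{\mathbb P}(\Omega\times X)$, using the skew product $\Theta$ of Proposition \ref{skew_product}. First I would fix an arbitrary starting measure $\nu\in\mathcal P_{\mathbb P}(\Omega\times X)$, for instance $\nu := \mathbb P\otimes\rho$ with $\rho$ any probability measure on $X$, and form the time-averages
\[
\mu_T := \frac{1}{T}\int_0^T\Theta_s\nu\,ds\textrm{ $;$ }T > 0.
\]
Because $\mathbb P$ is $\vartheta$-invariant, $p_{\Omega}(\Theta_s\nu) = \theta_s\mathbb P = \mathbb P$ for every $s$, so each $\mu_T$ again lies in $\mathcal P_{\mathbb P}(\Omega\times X)$; a $\Theta$-invariant limit point of $(\mu_T)$ will then be the desired $\varphi$-invariant measure by Definition \ref{invariant_measures_RDS}.

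The delicate step is the choice of topology in which to extract that limit point. Since Definition \ref{RDS} only grants that $(t,x)\mapsto\varphi(t,\omega)x$ is continuous for each fixed $\omega$, the skew product $\Theta_t$ need not be jointly continuous on $\Omega\times X$, so the ordinary narrow topology is unusable. Instead I would equip $\mathcal P_{\mathbb P}(\Omega\times X)$ with the narrow topology relative to $\mathbb P$, i.e. the coarsest topology making $\mu\mapsto\int f\,d\mu$ continuous for every $f$ in the test class $L^1(\Omega,\mathbb P;C(X))$ of maps that are $\mathcal A$-measurable in $\omega$, continuous in $x$, and satisfy $\int_{\Omega}\sup_{x\in X}|f(\omega,x)|\,\mathbb P(d\omega) < \infty$; such $f$ are integrated against the disintegration $\mu_{\omega}(dx)\mathbb P(d\omega)$ provided by Proposition \ref{factorization}. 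Because $X$ is compact and the $\Omega$-marginal is frozen at $\mathbb P$, the set $\mathcal P_{\mathbb P}(\Omega\times X)$ is compact for this topology, so some subsequence $\mu_{T_n}$ converges to a limit $\mu\in\mathcal P_{\mathbb P}(\Omega\times X)$; in particular $p_{\Omega}\mu = \mathbb P$.

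It then remains to prove $\Theta_t\mu = \mu$ for every $t$. The key point is that for $f$ in the test class above, $f\circ\Theta_t$ is again in the test class: it is measurable in $\omega$ as a composition, continuous in $x$ because $\varphi(t,\omega)\cdot$ is, and $\int\sup_x|f(\Theta_t(\omega,x))|\,\mathbb P(d\omega)\leqslant\int\sup_y|f(\theta_t\omega,y)|\,\mathbb P(d\omega) = \int\sup_y|f(\omega,y)|\,\mathbb P(d\omega) < \infty$ by $\vartheta$-invariance of $\mathbb P$. Hence $\mu\mapsto\int f\,d\Theta_t\mu = \int f\circ\Theta_t\,d\mu$ is continuous for this topology. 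Using the semiflow property $\Theta_t\circ\Theta_s = \Theta_{t+s}$, a telescoping computation gives
\[
\int f\,d\Theta_t\mu_{T_n} - \int f\,d\mu_{T_n} = \frac{1}{T_n}\left[\int_{T_n}^{T_n + t}\int f\,d\Theta_u\nu\,du - \int_0^t\int f\,d\Theta_u\nu\,du\right],
\]
whose modulus is bounded by $2t\,T_n^{-1}\int_{\Omega}\sup_x|f|\,d\mathbb P\to 0$. Passing to the limit along $(T_n)$ on both terms of the left-hand side yields $\int f\,d\Theta_t\mu = \int f\,d\mu$ for every $f$ in the test class, hence $\Theta_t\mu = \mu$. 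Thus $\mu\in\mathcal I_{\mathbb P}(\varphi)$, which is therefore non-void. Convexity of $\mathcal I_{\mathbb P}(\varphi)$ is immediate since both the $\Theta$-invariance and the marginal constraint $p_{\Omega}\mu=\mathbb P$ are preserved by convex combinations, and compactness follows because $\mathcal I_{\mathbb P}(\varphi)$ is a closed subset of the compact set $\mathcal P_{\mathbb P}(\Omega\times X)$.

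The main obstacle, as flagged above, is precisely the absence of joint continuity of $\Theta_t$: it forces one to abandon the classical narrow topology and to establish both the compactness of $\mathcal P_{\mathbb P}(\Omega\times X)$ and the continuity of the averaging operator in the $\mathbb P$-relative topology, the whole argument hinging on the $\vartheta$-invariance of $\mathbb P$ and the compactness of the fibre $X$.
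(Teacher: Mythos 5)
Your argument is correct and is essentially the proof of the result the paper itself does not reproduce: the text simply cites \cite{ARNOLD98}, Theorem 1.5.10, and the proof given there is exactly this fibred Krylov--Bogolyubov procedure carried out in $\mathcal P_{\mathbb P}(\Omega\times X)$ with the narrow topology relative to $\mathbb P$. The only ingredient you take on faith --- the compactness (and, to extract a subsequence rather than a subnet, the metrizability) of $\mathcal P_{\mathbb P}(\Omega\times X)$ for compact $X$ --- is the companion result in that same reference, so nothing is missing in substance.
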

\noindent
For a proof, please refer to \cite{ARNOLD98}, Theorem 1.5.10.
%

% Subsection : Malliavin calculus.

%
\subsection{Malliavin calculus}
Essentially inspired by D. Nualart \cite{NUALART06}, this subsection provides some definitions and results on Malliavin calculus. Moreover, basics on the Ornstein-Uhlenbeck semi-group and operator are stated in the last part.
\\
\\
Let $W$ be a $1$-dimensional centered Gaussian process with continuous paths on $[0,T]$ ($T > 0$). Its canonical probability space is denoted by $(\Omega,\mathcal A,\mathbb P)$. First, let us introduce two Hilbert spaces associated to that process :
\\
\\
On the one hand, the Cameron-Martin's space of $W$ is given by
\begin{displaymath}
\mathcal H^1 :=
\left\{
h\in C^0([0,T];\mathbb R) :
\exists Z\in\mathcal W
\textrm{ s.t. }
\forall t\in [0,T]\textrm{, }
h_t =
\mathbb E(W_tZ)
\right\}
\end{displaymath}
with
\begin{displaymath}
\mathcal W :=
\overline{\textrm{span}
\left\{W_t,
t\in [0,T]\right\}}^{L^2}.
\end{displaymath}
Let $\langle .,.\rangle_{\mathcal H^1}$ be the map defined on $\mathcal H^1\times\mathcal H^1$ by
\begin{displaymath}
\langle h,\hat h\rangle_{\mathcal H^1} :=
\mathbb E(Z\hat Z)
\end{displaymath}
where,
\begin{displaymath}
\forall t\in [0,T]\textrm{, }
h_t =
\mathbb E(W_tZ)
\textrm{ and }
\hat h_t =
\mathbb E(W_t\hat Z)
\end{displaymath}
for every $Z,\hat Z\in\mathcal W$.
\\
\\
That map is a scalar product on $\mathcal H^1$ and, equipped with it, $\mathcal H^1$ is a Hilbert space.
\\
\\
On the other hand, consider the set $\mathcal E$ of functions defined on $[0,T]$ by
\begin{displaymath}
\sum_{k = 1}^{n}
a_k\mathbf 1_{[0,s_k]}
\textrm{ ; }
n\in\mathbb N^*
\textrm{, }
(s_1,\dots,s_n)\in [0,T]^n
\textrm{, }
(a_1,\dots,a_n)\in\mathbb R^n,
\end{displaymath}
and $\mathcal H$ the closure of $\mathcal E$ for the scalar product $\langle .,.\rangle_{\mathcal H}$ defined by
\begin{displaymath}
\langle
\sum_{k = 1}^{n}a_k\mathbf 1_{[0,s_k]};
\sum_{l = 1}^{m}b_l\mathbf 1_{[0,t_l]}\rangle_{\mathcal H} :=
\sum_{k = 1}^{n}
\sum_{l = 1}^{m}a_kb_l\mathbb E\left(W_{s_k}W_{t_l}\right)
\end{displaymath}
for every $n,m\in\mathbb N^*$, $(s_1,\dots,s_n)\in [0,T]^n$, $(t_1,\dots,t_m)\in [0,T]^m$, $(a_1,\dots,a_n)\in\mathbb R^n$ and $(b_1,\dots,b_m)\in\mathbb R^m$. Equipped with the scalar product $\langle .,.\rangle_{\mathcal H}$, $\mathcal H$ is a separable Hilbert space called reproducing kernel Hilbert space of $W$ (cf. J. Neuveu \cite{NEUVEU68}).
\\
\\
Let $\mathbf W$ be the map defined on $\mathcal E$ by
\begin{displaymath}
\mathbf W\left(
\sum_{k = 1}^{n}a_k\mathbf 1_{[0,s_k]}\right) :=
\sum_{k = 1}^{n}a_kW_{s_k}
\end{displaymath}
for every $n\in\mathbb N^*$, $(s_1,\dots,s_n)\in [0,T]^n$ and $(a_1,\dots,a_n)\in\mathbb R^n$. It extends to $\mathcal H$ as a map called Wiener integral with respect to $W$, and $\{\mathbf W(h),h\in\mathcal H\}$ is an iso-normal Gaussian process (cf. \cite{NUALART06}, Definition 1.1.1).
\\
\\
Let $I$ be the map from $\mathcal H$ into $\mathcal H^1$ defined by
\begin{displaymath}
I(h) :=
\mathbb E\left[\mathbf W(h)W\right]\in\mathcal H^1
\end{displaymath}
for every $h\in\mathcal H$. It is an isometry from $\mathcal H$ into $\mathcal H^1$.
\\
\\
\textbf{Example (fractional Brownian motion).} A centered Gaussian process $B^H$ is a fractional Brownian motion of Hurst parameter $H\in ]0,1[$ if its covariance function is defined by :
\begin{displaymath}
R_H(t,s) =
\frac{1}{2}(s^{2H} + t^{2H} - |t - s|^{2H})
\textrm{ ; }
\forall s,t\in [0,T].
\end{displaymath}
The scalar product $\langle .,.\rangle_{\mathcal H}$ is explicit (cf. L. Decreusefond and A.S. Ust\"unel \cite{DU99} and D. Nualart \cite{NUALART06}, Subsection 5.1.3) :
\begin{itemize}
 \item If $H = 1/2$, $\mathcal H = L^2([0,T])$ and $\langle .,.\rangle_{\mathcal H}$ is the usual scalar product \mbox{on $L^2([0,T])$.}
 \item If $H > 1/2$,
 \begin{displaymath}
 \langle\varphi,\psi\rangle_{\mathcal H} =
 \alpha_H\int_{0}^{T}\int_{0}^{T}
 |t - s|^{2H - 2}\varphi(s)\psi(t)dsdt
 \textrm{ ; }
 \forall
 \varphi,\psi\in\mathcal H
 \end{displaymath}
 with $\alpha_H := H(2H - 1)$.
 \item If $H < 1/2$, let $K_H$ be the function defined on $\Delta_T$ by
 \begin{eqnarray*}
  K_H(t,s) & := &
  c_H\left[
  \left(\frac{t}{s}\right)^{H - 1/2}(t - s)^{H - 1/2}\right. -\\
  & & \left.\left(H -\frac{1}{2}\right)
  s^{1/2 - H}\int_{s}^{t}u^{H - 3/2}(u - s)^{H - 1/2}du\right]
  \textrm{ ; }
  \forall (s,t)\in\Delta_T
 \end{eqnarray*}
 where, $c_H > 0$ denotes a deterministic constant only depending on $H$. Let also $K_{H}^{*} :\mathcal H\rightarrow L^2([0,T])$ be the map defined by
 \begin{displaymath}
 (K_{H}^{*}\varphi)(s) :=
 K_H(T,s)\varphi(s) +
 \int_{s}^{T}
 \left[\varphi(t) -\varphi(s)\right]
 \frac{\partial K_H}{\partial t}(t,s)dt
 \textrm{ ; }
 \forall s\in [0,T]
 \textrm{, }
 \forall
 \varphi\in\mathcal H.
 \end{displaymath}
 Then,
 \begin{displaymath}
 \langle\varphi,\psi\rangle_{\mathcal H} =
 \int_{0}^{T}
 (K_{H}^{*}\varphi)(s)
 (K_{H}^{*}\psi)(s)ds
 \textrm{ ; }
 \forall
 \varphi,\psi\in\mathcal H.
 \end{displaymath}
\end{itemize}
Now, let us provide basics on Malliavin calculus for the isonormal Gaussian process $\mathbf W$ defined above.
\\
\\
\textbf{Notation.} The set of all continuously differentiable functions $f :\mathbb R^n\rightarrow\mathbb R$ such that $F$ and all its partial derivatives have polynomial growth is denoted by $C_{P}^{\infty}(\mathbb R^n;\mathbb R)$.
%

% Definition : Malliavin derivative.

%
\begin{definition}\label{Malliavin_derivative}
The Malliavin derivative of a smooth functional
\begin{displaymath}
F = f\left[
\mathbf W\left(h_1\right),\dots,
\mathbf W\left(h_n\right)\right]
\end{displaymath}
where $n\in\mathbb N^*$, $f\in C_{P}^{\infty}(\mathbb R^n;\mathbb R)$ and $h_1,\dots,h_n\in\mathcal H$ is the following $\mathcal H$-valued random variable :
\begin{displaymath}
\mathbf DF :=
\sum_{k = 1}^{n}
\partial_k f
\left[
\mathbf W\left(h_1\right),\dots,
\mathbf W\left(h_n\right)\right]h_k.
\end{displaymath}
\end{definition}
%

% Proposition : The domain of the Malliavin derivative.

%
\begin{proposition}\label{Malliavin_derivative_domain}
The map $\mathbf D$ is closable from $L^p(\Omega;\mathbb P)$ into $L^p(\Omega,\mathcal H;\mathbb P)$ for every $p\geqslant 1$. The domain of $\mathbf D$ in $L^p(\Omega;\mathbb P)$ is denoted by $\mathbb D^{1,p}$. It is the closure of the smooth functionals space for the norm $\|.\|_{1,p}$ such that
\begin{displaymath}
\|F\|_{1,p}^{p} :=
\mathbb E(|F|^p) +
\mathbb E(\|\mathbf DF\|_{\mathcal H}^{p}) < \infty
\end{displaymath}
for every $F\in L^p(\Omega;\mathbb P)$
\end{proposition}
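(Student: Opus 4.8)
The plan is to establish the result in the classical way (as in \cite{NUALART06}): closability is reduced to an integration by parts formula on the underlying Gaussian space, and the description of $\mathbb D^{1,p}$ as a closure is then simply the definition of the domain of the closure of a closable densely defined operator, equipped with its graph norm.

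First I would prove the integration by parts formula: for every smooth functional $F = f[\mathbf W(h_1),\dots,\mathbf W(h_n)]$ and every $h\in\mathcal H$,
\[
\mathbb E(\langle\mathbf DF,h\rangle_{\mathcal H}) = \mathbb E(F\mathbf W(h)).
\]
By Gram--Schmidt I may assume $h_1,\dots,h_n$ orthonormal in $\mathcal H$ (absorbing the linear change of coordinates into $f$, which remains in $C_{P}^{\infty}$), and by bilinearity of both sides in $h$ I may take $h = h_j$: indeed, writing $h = \sum_i c_i h_i + h^{\perp}$ with $h^{\perp}$ orthogonal to the span, one has $\langle\mathbf DF,h^{\perp}\rangle_{\mathcal H} = 0$ since $\mathbf DF$ lies in the span, while $\mathbf W(h^{\perp})$ is centered and independent of $F$, so $\mathbb E(F\mathbf W(h^{\perp})) = 0$. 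Since $\{\mathbf W(h)\}$ is isonormal, $X := (\mathbf W(h_1),\dots,\mathbf W(h_n))$ is a standard Gaussian vector, and the identity reduces to $\int_{\mathbb R^n}\partial_j f\,\phi = \int_{\mathbb R^n} f\, x_j\,\phi$ for the standard density $\phi$, which follows from $\partial_j\phi = -x_j\phi$ and one integration by parts, the boundary terms vanishing by the polynomial growth of $f$ against the Gaussian weight. Applied to a product this yields, for smooth $F,G$,
\[
\mathbb E(G\langle\mathbf DF,h\rangle_{\mathcal H}) = -\mathbb E(F\langle\mathbf DG,h\rangle_{\mathcal H}) + \mathbb E(FG\mathbf W(h)).
\]

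Next I would deduce closability for each $p\geqslant 1$. Let $(F_n)$ be a sequence of smooth functionals with $F_n\to 0$ in $L^p(\Omega;\mathbb P)$ and $\mathbf DF_n\to\eta$ in $L^p(\Omega,\mathcal H;\mathbb P)$; the goal is $\eta = 0$. Fixing $h\in\mathcal H$ and a bounded smooth functional $G$ with bounded derivatives, the product formula gives
\[
\mathbb E(G\langle\mathbf DF_n,h\rangle_{\mathcal H}) = -\mathbb E(F_n\langle\mathbf DG,h\rangle_{\mathcal H}) + \mathbb E(F_n G\mathbf W(h)).
\]
The left-hand side converges to $\mathbb E(G\langle\eta,h\rangle_{\mathcal H})$ by H\"older's inequality, while the right-hand side tends to $0$ because $F_n\to 0$ in $L^p\subset L^1$ and the factors $\langle\mathbf DG,h\rangle_{\mathcal H}$ and $G\mathbf W(h)$ belong to every $L^q$ (for the endpoint $p=1$ one takes $G\mathbf W(h)$ bounded, e.g. $G = g(\mathbf W(h))$ with $g$ compactly supported). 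Hence $\mathbb E(G\langle\eta,h\rangle_{\mathcal H}) = 0$ for all such $G$; as these functionals are dense in the relevant $L^q$, this forces $\langle\eta,h\rangle_{\mathcal H} = 0$ almost surely, and letting $h$ range over a countable dense subset of $\mathcal H$ gives $\eta = 0$.

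Finally, closability of $\mathbf D$ on the dense domain of smooth functionals means it admits a smallest closed extension, whose domain is by definition the completion of the smooth functionals for the graph norm $F\mapsto(\mathbb E(|F|^p) + \mathbb E(\|\mathbf DF\|_{\mathcal H}^{p}))^{1/p} = \|F\|_{1,p}$, which is precisely $\mathbb D^{1,p}$ as stated. I expect the main obstacle to be the integration by parts identity itself, and in particular the reduction to orthonormal $h_i$ that makes the whole statement a finite-dimensional Gaussian computation; once that identity and its product form are secured, the closability argument and the identification of $\mathbb D^{1,p}$ with a graph-norm closure are routine duality and functional-analytic steps.
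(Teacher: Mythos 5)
The paper gives no proof of this statement at all---it is a recalled standard result, with a pointer to Nualart, Proposition 1.2.1---and your argument is precisely the proof found in that reference: Gaussian integration by parts for smooth cylindrical functionals (reduced to an orthonormal, hence finite-dimensional standard Gaussian, computation), its product form, a duality argument for closability, and the identification of $\mathbb D^{1,p}$ with the graph-norm closure of the smooth functionals. Your proof is correct, including the appropriate handling of the endpoint $p=1$ by testing against $G$ with $G\mathbf W(h)$ bounded.
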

\noindent
For a proof, refer to \cite{NUALART06}, Proposition 1.2.1.
%

% Definition : Local Malliavin derivability.

%
\begin{definition}\label{Malliavin_derivative_loc}
A random variable $F$ is locally derivable in the sense of Malliavin if and only if there exists a sequence $((\Omega_n,F_n) ; n\in\mathbb N^*)$ of elements of $\mathcal A\times\mathbb D^{1,2}$ such that $\Omega_n\uparrow\Omega$ when $n\rightarrow\infty$ and, $F = F_n$ on $\Omega_n$ for every $n\in\mathbb N^*$. Such random variables define a vector space denoted by $\mathbb D_{\textrm{loc}}^{1,2}$, and containing $\mathbb D^{1,2}$.
\end{definition}
%

% Definition : $\mathcal H^1$-differentiability.

%
\begin{definition}\label{H1_differentiability}
A random variable $F : \Omega\rightarrow\mathbb R$ is continuously $\mathcal H^1$-differentiable if and only if, for almost every $\omega\in\Omega$, $h\mapsto F(\omega + h)$ is continuously differentiable from $\mathcal H^1$ into $\mathbb R$.
\end{definition}
%

% Proposition : $\mathcal H^1$-differentiability and Malliavin's derivative.

%
\begin{proposition}\label{link_H1_differentiability_Malliavin_derivability}
A continuously $\mathcal H^1$-differentiable random variable $F :\Omega\rightarrow\mathbb R$ is locally derivable in the sense of Malliavin. Moreover, if $\mathbb E(F^2) < \infty$ and $\mathbb E(\|\mathbf DF\|_{\mathcal H}^{2}) < \infty$, then $F\in\mathbb D^{1,2}$ and, for almost every $\omega\in\Omega$ and every $h\in\mathcal H^1$,
\begin{displaymath}
\langle\mathbf DF(\omega),I^{-1}(h)\rangle_{\mathcal H} =
D_hF^{\omega}(0)
\end{displaymath}
with $F^{\omega} = F(\omega + .)$.
\end{proposition}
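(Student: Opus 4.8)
The plan is to reduce everything to a single integration-by-parts identity and to identify the weak derivative it produces with $\mathbf DF$ through the closability of $\mathbf D$ (Proposition \ref{Malliavin_derivative_domain}). The starting point is the action of a Cameron--Martin shift on Wiener integrals: for $h = I(g)$ with $g\in\mathcal H$ and a generator $\tilde h = \mathbf 1_{[0,s]}$, one has $W_s(\omega + h) = W_s(\omega) + h_s$ with $h_s = I(g)_s = \mathbb E[\mathbf W(g)W_s] = \langle g,\mathbf 1_{[0,s]}\rangle_{\mathcal H}$, hence by linearity and density $\mathbf W(\tilde h)(\omega + h) = \mathbf W(\tilde h)(\omega) + \langle g,\tilde h\rangle_{\mathcal H}$ for every $\tilde h\in\mathcal H$. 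Applied to a smooth functional $G = f[\mathbf W(h_1),\dots,\mathbf W(h_n)]$ this gives, by the chain rule, $D_hG^\omega(0) = \sum_k\partial_kf[\cdots]\langle g,h_k\rangle_{\mathcal H} = \langle\mathbf DG(\omega),I^{-1}(h)\rangle_{\mathcal H}$, so the announced formula holds on the smooth functionals space by direct computation (Definition \ref{Malliavin_derivative}).

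Now let $F$ be continuously $\mathcal H^1$-differentiable. For almost every $\omega$ the map $h\mapsto D_hF^\omega(0)$ is a continuous linear form on $\mathcal H^1$; since $I$ is an isometry, it is identified with a unique measurable $u(\omega)\in\mathcal H$ such that $\langle u(\omega),I^{-1}(h)\rangle_{\mathcal H} = D_hF^\omega(0)$ for every $h\in\mathcal H^1$. The whole problem is to prove that $u$ is in fact the Malliavin derivative of $F$. Assuming first the integrability hypotheses $\mathbb E(F^2)<\infty$ and $\mathbb E(\|u\|_{\mathcal H}^2)<\infty$, I would establish, for every smooth functional $G$ and every $g\in\mathcal H$ (with $h:=I(g)$), the identity
\begin{displaymath}
\mathbb E(G\langle u,g\rangle_{\mathcal H}) = \mathbb E(FG\,\mathbf W(g)) - \mathbb E(F\langle\mathbf DG,g\rangle_{\mathcal H}).
\end{displaymath}
This is obtained by differentiating at $\varepsilon = 0$ the Cameron--Martin change of variables
\begin{displaymath}
\mathbb E[F(\omega + \varepsilon h)G(\omega)] = \mathbb E\left[F(\omega)G(\omega - \varepsilon h)e^{\varepsilon\mathbf W(g) - \varepsilon^2\|g\|_{\mathcal H}^2/2}\right],
\end{displaymath}
where the left-hand derivative is $\mathbb E(D_hF^\omega(0)G) = \mathbb E(\langle u,g\rangle_{\mathcal H}G)$, while differentiating the right-hand integrand (the exponential contributing $\mathbf W(g)$, the shift of $G$ contributing $-D_hG^\omega(0) = -\langle\mathbf DG,g\rangle_{\mathcal H}$) yields the two remaining terms. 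Since the displayed identity is exactly the integration-by-parts relation characterising $\mathbf D$, and the smooth functionals are dense in $L^2(\Omega;\mathbb P)$, the closability of $\mathbf D$ forces $F\in\mathbb D^{1,2}$ with $\mathbf DF = u$; the formula of the statement then holds by the very definition of $u$.

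Finally, to obtain local derivability when no integrability is assumed, I would localise along the increasing sequence $\Omega_n := \{|F|\leqslant n\}\cap\{\|u\|_{\mathcal H}\leqslant n\}$, which satisfies $\Omega_n\uparrow\Omega$: modifying $F$ outside $\Omega_n$ into a bounded, boundedly $\mathcal H^1$-differentiable functional reduces matters to the integrable case just treated, so $F$ coincides on $\Omega_n$ with an element of $\mathbb D^{1,2}$, whence $F\in\mathbb D_{\textrm{loc}}^{1,2}$ in the sense of Definition \ref{Malliavin_derivative_loc}. The main obstacle is the differentiation under the expectation sign in the Cameron--Martin identity: writing the difference quotient as $\varepsilon^{-1}(F(\omega+\varepsilon h) - F(\omega)) = \int_0^1 D_hF^{\omega + t\varepsilon h}(0)\,dt$, its justification requires the \emph{continuity} of $h\mapsto D_hF^\omega(0)$ (not merely its existence) to dominate these quotients uniformly for small $\varepsilon$, together with the uniform integrability of the Cameron--Martin density near $\varepsilon = 0$. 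This passage from directional differentiability along Cameron--Martin directions to genuine Malliavin differentiability is the technical heart of the argument, and is precisely where the qualitative hypothesis of continuous $\mathcal H^1$-differentiability is used.
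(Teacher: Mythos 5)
The paper does not actually prove this proposition: it is quoted from D.~Nualart's book, with the single line ``For proofs, refer to \cite{NUALART06}, Proposition 4.1.3 and Lemma 4.1.2''. So there is no in-paper argument to compare against; what you have reconstructed is essentially the standard route behind that citation --- Cameron--Martin integration by parts plus closability for the square-integrable case, localization for the local statement. Your computation of the formula on smooth functionals, the Riesz identification of $h\mapsto D_hF^{\omega}(0)$ with an element $u(\omega)\in\mathcal H$ through the isometry $I$, and the duality identity $\mathbb E(G\langle u,g\rangle_{\mathcal H}) = \mathbb E(FG\,\mathbf W(g)) - \mathbb E(F\langle\mathbf DG,g\rangle_{\mathcal H})$ are all the right ingredients: the latter exhibits $F$ as belonging to the domain of the adjoint of the divergence restricted to elementary processes $Gg$, which coincides with the closure of $\mathbf D$.

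Two steps are genuinely missing. First, the differentiation under the expectation on the left-hand side of the Cameron--Martin identity. Writing the difference quotient as $\int_0^1\langle u(\omega+t\varepsilon h),I^{-1}(h)\rangle_{\mathcal H}\,dt$, the continuity of the $\mathcal H^1$-derivative gives pointwise convergence as $\varepsilon\to 0$, but the hypotheses $\mathbb E(F^2)<\infty$ and $\mathbb E(\|u\|_{\mathcal H}^2)<\infty$ provide no integrable dominating function for the supremum of these quotients over small $\varepsilon$: the values of $u$ at shifted points $\omega+t\varepsilon h$ are not controlled by $\|u(\omega)\|_{\mathcal H}$. You name this as the ``main obstacle'' but do not overcome it; the textbook proof avoids it by running in the opposite order, localizing \emph{first} on sets where $F$ and $\|u\|_{\mathcal H}$ are uniformly bounded over a small Cameron--Martin ball around $\omega$ (so that domination is free), proving membership in $\mathbb D^{1,2}$ there, and only then removing the truncation via closability. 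Second, the localization itself is asserted rather than constructed. On $\Omega_n=\{|F|\leqslant n\}\cap\{\|u\|_{\mathcal H}\leqslant n\}$ you would need a globally $\mathcal H^1$-$C^1$, bounded modification with bounded derivative that agrees with $F$ on $\Omega_n$. The natural candidate $\psi_n(F)$ with $\psi_n$ a smooth cutoff has derivative $\psi_n'(F)\,u$, which is not bounded; one cannot truncate in $\|u\|_{\mathcal H}$ by composition, since $\omega\mapsto\|u(\omega)\|_{\mathcal H}$ need not be $\mathcal H^1$-differentiable; and $\Omega_n$ as defined is not stable under small Cameron--Martin shifts, which is needed for the modified functional to remain $\mathcal H^1$-$C^1$. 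Constructing the correct localizing sets and cutoffs is the actual content of Nualart's Proposition 4.1.3 (Kusuoka's argument), and it is precisely the part your sketch leaves open.
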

\noindent
For proofs, refer to \cite{NUALART06}, Proposition 4.1.3 and Lemma 4.1.2.
\\
\\
\textbf{Remark.} For the sake of simplicity, $D_hF^{.}(0)$ is denoted by $D_hF$.
\\
\\
The last part of this subsection is devoted to the Ornstein-Uhlenbeck semi-group and operator :
%

% Definition : Ornstein-Uhlenbeck operator.

%
\begin{definition}\label{OU_operator}
The Ornstein-Uhlenbeck semi-group $(T_u, u\in\mathbb R_+)$ is the family of operators defined on $L^2(\Omega;\mathbb P)$ by
\begin{displaymath}
T_uF :=
\sum_{n = 0}^{\infty}
e^{-nu}J_nF
\textrm{ $;$ }
\forall
u\in\mathbb R_+
\end{displaymath}
and its generator, the Ornstein-Uhlenbeck operator $L$, satisfies
\begin{displaymath}
LF =
-\sum_{n = 0}^{\infty}
nJ_nF
\end{displaymath}
for any $F\in L^2(\Omega;\mathbb P)$, of chaos expansion
\begin{displaymath}
F =
\sum_{n = 0}^{\infty}
J_nF.
\end{displaymath}
\end{definition}
\noindent
\textbf{Remarks :}
\begin{enumerate}
 \item About the chaos expansion of square integrable random variables, see \cite{NUALART06}, Section 1.1. About the Ornstein-Uhlenbeck semi-group and operator, see \cite{NUALART06}, Section 1.4.
 \item The operator $T_u$ is nonnegative for every $u\in\mathbb R_+$ :
 \begin{displaymath}
 \forall F\in L^2(\Omega;\mathbb P)
 \textrm{, }
 F\geqslant 0
 \Longrightarrow
 T_uF\geqslant 0.
 \end{displaymath}
 \item The operator $L$ is invertible on the subspace of centered random variables belonging to $L^2(\Omega;\mathbb P)$, and
 \begin{displaymath}
 L^{-1}F =
 -\sum_{n = 0}^{\infty}\frac{1}{n}J_nF.
 \end{displaymath}
 If $\mathbb E(F)\not= 0$, $L^{-1}F$ still exists, and $LL^{-1}F = F -\mathbb E(F)$.
\end{enumerate}
%

% References.

%

%

\begin{thebibliography}{99}
 \bibitem{DA00} L.F. Abbott and P. Dayan. \textit{Theoretical Neurosciences: Computational and Mathematical Modeling of Neural Systems.} The MIT Press, 2001.
 \bibitem{ARNOLD98} L. Arnold. \textit{Random Dynamical Systems.} Springer Monographs in Mathematics SMM, Springer, 1998.
 \bibitem{DU99} L. Decreusefond and A.S. Ust\"unel. \textit{Stochastic Analysis of the Fractional Brownian Motion.} Potential Analysis 10:177-214, 1999.
 \bibitem{DIEKER04} T. Dieker. \textit{Simulation of Fractional Brownian Motion.} Master thesis, University of Twente, 2004.
 \bibitem{DG12} S. Ditlevsen and P. Greenwood. \textit{The Morris-Lecar Neuron Model Embeds a Leaky-and-Fire Model.} J. Math. Biol., pages 1-21, 2012.
 \bibitem{DOSS76} H. Doss. \textit{Liens entre \'equations diff\'erentielles stochastiques et ordinaires.} C.R. Acad. Sci. Paris Ser. A-B, 283(13):Ai, A939-A942, 1976.
 \bibitem{FV10} P. Friz and N. Victoir. \textit{Multidimensional Stochastic Processes as Rough Paths : Theory and Applications.} Cambridge Studies in Applied Mathematics, 120. Cambridge University Press, Cambridge, 2010.
 \bibitem{GAKN09} M.J. Garrido-Atienza, P.E. Kloeden and A. Neuenkirch. \textit{Discretization of Stationary Solutions of Stochastic Systems Driven by Fractional Brownian Motion.} Appl. Math. Optim. 60:151-172, Springer, 2009.
 \bibitem{KT81} S. Karlin and H.M. Taylor. \textit{A Second Course in Stochastic Processes.} Academic Press Inc., Harcourt Brace Javanovich Publishers, 1981.
 \bibitem{LEJAY10} A. Lejay. \textit{Controlled Differential Equations as Young Integrals : A Simple Approach.} Journal of Differential Equations 248, 1777-1798, 2010.
 \bibitem{LQ02} T. Lyons and Z. Qian. \textit{System Control and Rough Paths.} Oxford University Press, 2002.
 \bibitem{MARIE12} N. Marie. \textit{A Generalized Mean-Reverting Equation and Applications.} ESAIM:PS, DOI:10.1051/ps/2014002, 2014.
 \bibitem{MS04} B. Maslowski and B. Schmalfuss. \textit{Random Dynamical Systems and Stationary Solutions of Differential Equations Driven by the Fractional Brownian Motion.} Stoch. Anal. Appl. 22, 1577-1607, 2004.
 \bibitem{ML81} C. Morris and H. Lecar. \textit{Voltage Oscillations in the Barnacle Giant Muscle Fiber.} Biophys J. 35:193-213, 1981.
 \bibitem{NEUVEU68} J. Neuveu. \textit{Processus al\'eatoires gaussiens.} Presses de l'Universit\'e de Montr\'eal, 1968.
 \bibitem{NV09} I. Nourdin and F. Viens. \textit{Density Estimates and Concentration Inequalities with Malliavin Calculus.} Electronic Journal of Probability, Vol. 14(78), pp. 2287-2309, 2009.
 \bibitem{NUALART06} D. Nualart. \textit{The Malliavin Calculus and Related Topics. Second Edition.} Probability and Its Applications, Springer, 2006.
 \bibitem{SCHMAL98} B. Schmalfuss. \textit{A Random Fixed Point Theorem and the Graph Transformation.} Journal of Mathematical Analysis and Applications, 225(1):91-113, 1998.
 \bibitem{SUSSMAN78} H.J. Sussman. \textit{On the Gap between Deterministic and Stochastic Ordinary Differential Equations.} Ann. Probability, 6(1):19-41, 1978.
 \bibitem{TP04} T. Tateno and K. Pakdaman. \textit{Random Dynamics of The Morris-Lecar Neural Model.} Chaos 14:511-530, 2004.
 \bibitem{VILLANI08} C. Villani. \textit{Optimal transport, old and new.} Grundlehren der Mathematischen Wissenschaften, 338. Springer-Verlag, 2008.
\end{thebibliography}
\end{document}